\definecolor{vert}{RGB}{13,75,47}
\definecolor{gris}{RGB}{128,128,128}
\definecolor{bleu}{RGB}{0,50,150}
\definecolor{rouge}{RGB}{162,30,2}
\crefname{equation}{}{}
\title{Deligne $1$-motives with torsion and étale motives}
\author{Raphaël Ruimy}
\date{}
\theoremstyle{plain}
\newtheorem{theorem}{Theorem}[section]
\newtheorem{proposition}[theorem]{Proposition}
\newtheorem*{theorem*}{Theorem}
\newtheorem*{mainthm}{Main Theorem}
\newtheorem*{prop*}{Proposition}
\newtheorem{hyp}[theorem]{Hypothesis}
\newtheorem{lemma}[theorem]{Lemma}
\newtheorem{corollary}[theorem]{Corollary} 
\theoremstyle{definition}
\newtheorem{definition}[theorem]{Definition}
\newtheorem*{definition*}{Definition}
\crefname{hyp}{Hypothesis}{Hypotheses}
\newtheorem{constr}[theorem]{Construction}
\theoremstyle{remark}
\newtheorem{rem}[theorem]{Remark}
\newtheorem*{rem*}{Remark}
\numberwithin{equation}{section}
\newcommand{\Q}{\mathbb{Q}}
\newcommand{\Z}{\mathbb{Z}}
\newcommand{\Spec}{\mathrm{Spec}}
\newcommand{\A}{\mathbb{A}}
\newcommand{\Gal}{\mathrm{Gal}}
\newcommand{\D}{\mathrm{D}}
\newcommand{\acal}{\mathcal{A}}
\newcommand{\ecal}{\mathcal{E}}
\newcommand{\ccal}{\mathcal{C}}
\newcommand{\mc}{\mathcal}
\newcommand{\mb}{\mathbb}
\newcommand{\proet}{\mathrm{pro\acute{e}t}}
\newcommand{\Perf}{\mathrm{Perf}}
\newcommand{\colim}{\mathrm{colim}}
\newcommand{\Hom}{\mathrm{Hom}}
\newcommand{\coker}{\mathrm{coker}}
\DeclareMathOperator{\sHom}{\mathscr{H}\text{\kern -3pt {\calligra\large om}}\,}
\newcommand{\Map}{\mathrm{Map}}
\newcommand{\map}{\mathrm{map}}
\newcommand{\pp}{\mathfrak{p}}
\newcommand{\et}{\mathrm{\acute{e}t}}\newcommand{\DM}{\mathrm{DM}}
\newcommand{\Sh}{\mathrm{Sh}}
\newcommand{\Sch}{\mathrm{Sch}}
\newcommand{\heart}{\heartsuit}
\newcommand{\rar}{\to}
\newcommand{\Sm}{\mathrm{Sm}}
\newcommand{\sm}{\mathrm{sm}}
\newcommand{\AAA}{\mathbb{A}^1}
\newcommand{\Loc}{\mathrm{Loc}}
\newcommand{\rmm}{\mathrm{M}}
\newcommand{\rmc}{\mathrm{C}}
\begin{document}

\maketitle
\tableofcontents

\begin{abstract}
    We construct the motivic t-structure on $1$-motives with integral coefficients over a scheme of characteristic zero or a Dedekind scheme. When we invert the residue characteristic exponents of the base, this t-structure induces a t-structure on the category of smooth $1$-motives whose heart is the category of Deligne $1$-motives with torsion which we prove to be abelian. This relies on the fact that over a normal scheme, Deligne $1$-motives with torsion are determined by their fiber on the generic point and on an explicit description of good reduction Deligne $1$-motives with torsion. 
\end{abstract}

\section*{Introduction}
Connecting the categories of mixed étale motives from \cite{thesevoe,ayo14,em} to Grothendieck's conjectural theory of motives would require defining a motivic t-structure on étale motives. Over a field, one of the most substantial advances on this problem was the definition of t-structure on the subcategory of $1$-motives that we can link to Deligne's category of $1$-motives from \cite{hodgeIII}. 
\begin{theorem*}(\cite{orgo,ayo11,abv,bvk})
    Let $k$ be a field of residue characteristic exponent $p$. There is a t-structure on the category $\DM_{\et}^1(k,\Z)$ of (geometric) Voevodsky étale $1$-motives whose heart is the abelian category of Deligne $1$-motives with torsion $\rmm_1^\D(k,\Z[1/p])$. Furthermore, we have an equivalence 
    \[\D^b(\rmm_1^\D(k,\Z[1/p]))\to \DM^1_{\et}(k,\Z).\]
\end{theorem*}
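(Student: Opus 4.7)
The plan is to bypass any direct construction of the t-structure on $\DM^1_{\et}(k,\Z)$ and instead obtain it by transport through the claimed derived equivalence. Concretely, I will first construct the abelian category $\rmm_1^\D(k,\Z[1/p])$ of Deligne $1$-motives with torsion, define a triangulated functor from $\D^b(\rmm_1^\D(k,\Z[1/p]))$ to $\DM^1_{\et}(k,\Z)$, and show it is an equivalence. The t-structure on $\DM^1_{\et}(k,\Z)$ is then the pullback of the standard t-structure on the bounded derived category of an abelian category, whose heart is tautologically $\rmm_1^\D(k,\Z[1/p])$.

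\textbf{The abelian category and the functor.} A Deligne $1$-motive with torsion is a two-term complex $[L \xrightarrow{u} G]$ placed in degrees $-1,0$, where $L$ is a finitely generated Galois $\Z[1/p]$-module (possibly with torsion) and $G$ is a semi-abelian variety over $k$. Showing that this forms an abelian category is the first delicate step: kernels and cokernels of a morphism force one to quotient semi-abelian varieties by finite subgroup schemes, and inverting $p$ ensures that every such subgroup is étale, so the quotient is again semi-abelian and the result remains a $1$-motive with torsion. The realization functor sends $[L \xrightarrow{u} G]$ to the étale motive given by the cone of the associated map $M(L) \to M(G)$, appropriately shifted, and extends to the bounded derived category by standard resolution arguments applied to short exact sequences in $\rmm_1^\D(k,\Z[1/p])$.

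\textbf{Equivalence and main difficulty.} Essential surjectivity rests on the generation of $\DM^1_{\et}(k,\Z)$ by motives of smooth curves and their weight-one Chow--K\"unneth pieces, each of which corresponds to a Jacobian or generalized Jacobian and hence lies in the essential image. Full faithfulness reduces to matching $\Ext^i$ groups on both sides for $1$-motives: the case $i=0$ holds by construction, and the case $i=1$ follows from the Yoneda interpretation of extensions of $1$-motives combined with Voevodsky's computation of motivic cohomology in weight one. The technical heart of the proof is showing that higher $\Ext$ groups vanish (or are identified) on both sides; this relies on sharp cohomological dimension estimates for Galois cohomology and for motivic cohomology of semi-abelian varieties with $\Z[1/p]$-coefficients. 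It is precisely here that inverting $p$ becomes indispensable: infinitesimal group schemes in positive characteristic would otherwise produce extensions invisible to the étale $1$-motive category and obstruct the derived equivalence, which is why the hypotheses of the theorem cannot accommodate the full $\Z$ on the abelian category side.
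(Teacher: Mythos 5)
Your overall architecture --- build the abelian category $\rmm_1^\D(k,\Z[1/p])$, construct a realization $\D^b(\rmm_1^\D(k,\Z[1/p]))\to\DM^1_{\et}(k,\Z)$, prove it is an equivalence, and transport the standard t-structure --- is exactly how this theorem is organized in the cited sources, and it is also how the present paper uses the result (via the functor $\Phi_k^\natural$ of \Cref{Phiknatural}, whose equivalence property is quoted from \cite[Theorem~2.1.2]{bvk} together with the cancellation theorem for étale motives). So the route is not the problem; the problem is that the step carrying all the content is asserted rather than proved.

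Concretely, your treatment of full faithfulness has a genuine gap. You claim the higher $\Ext$ groups ``vanish (or are identified)'' thanks to ``sharp cohomological dimension estimates for Galois cohomology''; this cannot work as stated, because the theorem holds over an arbitrary field, with no bound on cohomological dimension, and the higher Hom groups in $\DM_{\et}(k,\Z)$ between $1$-motives do not vanish in general (already $\Hom(\Z,\Z(1)[i+1])\cong H^i_{\et}(k,\Gm)\otimes_\Z\Z[1/p]$ contains the prime-to-$p$ Brauer group for $i=2$). What is actually required is the identification, in \emph{every} degree, of Yoneda $\Ext$ groups computed in the abelian category of Deligne $1$-motives with torsion with the corresponding Hom groups of étale motives; this is the substance of \cite[Theorem~2.1.2]{bvk} (or of the $\mathrm{LAlb}/\mathrm{RPic}$ machinery of \cite{abv,ayo11}) and rests on the theory of $1$-motivic sheaves, not on a dimension estimate. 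Two further points your sketch skips: the known computations are effective statements, so landing the equivalence in the stabilized category $\DM^1_{\et}(k,\Z)$ needs the cancellation theorem (full faithfulness of $\Sigma^\infty$ on these objects), as recalled in \Cref{Phiknatural}; and ``inverting $p$ makes every finite subgroup scheme étale'' is not literally true --- the correct mechanism is that quotients by infinitesimal subgroup schemes are $p$-power isogenies, which become invertible after tensoring Hom groups with $\Z[1/p]$, while on the motivic side $\DM_{\et}(k,\Z)$ is itself $\Z[1/p]$-linear in characteristic $p$, which is why the integral statement forces $\Z[1/p]$-coefficients on the abelian side.
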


This problem can also be considered over an arbitrary base scheme, but so far, only the case of motives with rational coefficients was solved.

\begin{theorem*}(\cite{plh,plh2,vaish}) Let $S$ be a noetherian excellent finite dimensional scheme allowing resolution of singularities by alterations, let $\ell$ be a prime number.
There is a t-structure on category $\DM^1_{\et}(S,\Q)$ of $1$-motives over $S$ with rational coefficients such that the $\ell$-adic realization functor
$$\rho_\ell\colon\DM^1_{\et}(S,\Q)\rar \D_\mathrm{cons}(S[1/\ell],\Q_\ell)$$
is t-exact when the derived category $\D_\mathrm{cons}(S[1/\ell],\Q_\ell)$ of constructible $\ell$-adic complexes over $S[1/\ell]$ is endowed with its ordinary t-structure.

Furthermore, there is a fully faithful functor 
\[\Phi_S\colon\rmm_1^\D(S,\Q)\to \DM^1(S,\Q)^\heart\] from the category $\rmm_1^\D(S,\Q)$ of Deligne $1$-motives with rational coefficients to the heart of the t-structure.
\end{theorem*}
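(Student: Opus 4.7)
The plan is to reduce the construction to the field case provided by the first theorem, via a gluing argument along a stratification of $S$, in the spirit of Beilinson--Bernstein--Deligne's construction of the perverse t-structure. As a prerequisite, I would use that with rational coefficients Ayoub's six functor formalism applies to $\DM^1_\et(-,\Q)$, giving compatible pullbacks $i_s^\ast, i_s^!\colon\DM^1_\et(S,\Q)\to\DM^1_\et(k(s),\Q)$ for every point $s\in S$, together with the localization triangles associated to any closed-open decomposition. On each residue field the first theorem already provides a t-structure whose heart is $\rmm_1^\D(k(s),\Q)$.

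Next I would define the t-structure pointwise: set $M\in\DM^1_\et(S,\Q)^{\leqslant 0}$ (resp.\ $M\in\DM^1_\et(S,\Q)^{\geqslant 0}$) if and only if for every $s\in S$, the object $i_s^\ast M$ (resp.\ $i_s^!M$) sits in non-positive (resp.\ non-negative) degrees for the field t-structure at $s$. Verifying that this really defines a t-structure is a gluing/constructibility problem: every $M$ must admit a stratification of $S$ into locally closed subsets over which its pullbacks behave like lisse sheaves, so that the Beilinson--Bernstein--Deligne gluing axioms apply inductively on the strata. This relies crucially on resolution of singularities by alterations, used to resolve the families of curves whose motives generate $\DM^1_\et(S,\Q)$. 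T-exactness of $\rho_\ell$ is then checked stratum by stratum from the known field case, since $\ell$-adic realization commutes with the six functors.

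For the fully faithful functor $\Phi_S$, I would send a Deligne $1$-motive $M=[L\to G]$ to the Voevodsky motive of the associated two-term complex. Its pullback at each $s\in S$ is the specialization $M_s\in\rmm_1^\D(k(s),\Q)$, which by the first theorem lies in the heart of the field t-structure, so by the pointwise criterion $\Phi_S(M)\in\DM^1_\et(S,\Q)^\heart$. Full faithfulness would then follow by devissage along the stratification: morphisms on both sides fit into exact sequences controlled by generic and codimension-one data, and a five-lemma reduces the statement to the case of a field, already contained in the first theorem.

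The main obstacle is the gluing step: one must obtain a constructibility result inside $\DM^1_\et$ that is strong enough to control the perversity conditions and make the Beilinson--Bernstein--Deligne machinery run. This is the technical heart of \cite{plh2} and \cite{vaish}, and is the reason one must impose resolution of singularities by alterations on $S$; checking that $\Phi_S$ is compatible with specialization (so that it really lands in the heart defined pointwise) is a secondary but delicate point, requiring a careful analysis of Néron models for the semi-abelian part of $M$.
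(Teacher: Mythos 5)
This statement is quoted from \cite{plh,plh2,vaish}; the paper does not reprove it, but the approach of those references (mirrored in the integral setting later in this paper) is quite different from yours, and your route has a genuine gap. You propose to define the t-structure pointwise, with the aisle cut out by $i_s^*$ and the co-aisle by $i_s^!$, and then run Beilinson--Bernstein--Deligne gluing along a stratification. The problem is that the subcategory $\DM^1_{\et}(S,\Q)\subset \DM_{\et}(S,\Q)$ is \emph{not} stable under the functors this requires: it is closed under $f^*$, $\otimes$ and $f_!$ for $f$ quasi-finite, but not under $j_*$ or $i^!$ (for instance $i^!\Q_S$ for a point in a smooth surface is $\Q(-2)[-4]$, which is not a $1$-motive, and $j_*\Q_U$ for $U$ open in a surface is not a $1$-motive either). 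So the condition ``$i_s^!M$ sits in non-negative degrees for the field t-structure'' is not even formulated inside $1$-motives, the recollement does not restrict, and the BBD machinery cannot be run in $\DM^1_{\et}$. The constructibility statement you defer to ``the technical heart of \cite{plh2} and \cite{vaish}'' is not an auxiliary input one can quote: it \emph{is} the theorem being proved, and precisely the point where the naive gluing breaks down.

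What the cited works actually do (and what this paper imitates integrally) is to define the t-structure on the larger category of Ind-$1$-motives by a family of non-positive generators coming from Deligne $1$-motives (equivalently, lattices, tori and Jacobians of relative curves), where no constructibility is needed, and then prove that truncations of constructible $1$-motives remain constructible. The substitute for the missing $j_*$, $i^!$ inside $1$-motives is the motivic Picard functor $\omega^1$, right adjoint to the inclusion of (Ind-)cohomological $1$-motives, and the crucial rational-coefficient input is that $\omega^1$ preserves compact objects, which makes the dévissage along localization triangles $i_*\omega^1 i^! \to \mathrm{id} \to \omega^1 j_* j^*$ work; this is also where resolution by alterations enters. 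The t-exactness of $\rho_\ell$ and the fact that $\Phi_S$ lands in the heart are then obtained by reduction to points (as in \Cref{reducingtofields} and \Cref{Deligne1motareinheart} here), and full faithfulness of $\Phi_S$ uses the embedding of Deligne $1$-motives over a normal base into those over the generic point (\cite[Proposition~A.11]{plh}) together with a computation of $\omega^1 j_* j^*$ on smooth objects of the heart, rather than a five-lemma along a stratification. Your pointwise description of the aisle is compatible with the outcome, but as a construction it puts the cart before the horse.
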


We know that the existence of the motivic t-structure with rational coefficients implies the existence of the t-structure with integral coefficients (see for instance\cite{IntegralNori}) but this doesn't apply to $1$-motives directly. Furthermore, the proofs in \cite{plh,plh2,vaish} cannot be adapted to the integral setting: they rely on the fact that with rational coefficients, the motivic Picard functor $\omega^1$ from \cite{plh} preserves compact objects which allows to understand a motive from its restriction on an open subscheme and on its closed complement by means of a localization triangle. With integral coefficients, this fails completely: even the Artin truncation functor $\omega^0$ can have a very pathological behavior as it almost never preserves constructible objects (see \cite[Proposition~2.2.1]{AM2}). 

In \cite{AM1}, we proved that in the case of $0$-motives, we can in fact build up from the case of rational coefficients to produce a t-structure on the category $\DM^0_{\et}(S,\Z)$ of $0$-motives:
\begin{theorem*}(\cite{AM1}) Let $S$ be a noetherian excellent finite dimensional scheme allowing resolution of singularities by alterations and let $\ell$ be a prime number.
There is a t-structure on category $\DM^0_{\et}(S,\Z)$ of $0$-motives over $S$ such that the $\ell$-adic realization functor
$$\rho_\ell\colon\DM^0_{\et}(S,\Z)\rar \D_\mathrm{cons}(S[1/\ell],\Z_\ell)$$
is t-exact when the derived category $\D_\mathrm{cons}(S[1/\ell],\Z_\ell)$ of constructible $\ell$-adic complexes over $S[1/\ell]$ is endowed with its ordinary t-structure.
\end{theorem*}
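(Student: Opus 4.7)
The plan is to build the integral t-structure by gluing the rational t-structure from the previous theorem with torsion t-structures obtained from an explicit description of $0$-motives with finite coefficients. By Suslin--Voevodsky rigidity, for $n$ invertible on $S$, the category $\DM_{\et}(S,\Z/n)$ is equivalent to the derived category of torsion étale sheaves of $\Z/n$-modules, and restricting to the $0$-motives subcategory identifies $\DM^0_{\et}(S,\Z/n)$ with a full subcategory of $\Db(S_{\et},\Z/n)$ which inherits the ordinary t-structure. Taking $\ell$-adic limits in $n=\ell^k$ produces an $\ell$-adic t-structure on the $\ell$-adic completion of $\DM^0_{\et}(S[1/\ell],\Z)$, compatible with the ordinary t-structure on $\D_{\mathrm{cons}}(S[1/\ell],\Z_\ell)$.

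Next, I would define the integral t-structure by arithmetic fracture: declare $M\in\DM^0_{\et}(S,\Z)^{\leqslant 0}$ if and only if $M\otimes_\Z\Q$ is in degree $\leqslant 0$ for the rational t-structure, and for every prime $\ell$ the derived $\ell$-adic completion of $M$ is in degree $\leqslant 0$ for the $\ell$-adic t-structure; symmetrically for $\geqslant 0$. Verifying the t-structure axioms reduces to the observation that $\DM^0_{\et}(S,\Z)$ is recovered as a homotopy fiber product of its rationalization and its various $\ell$-adic completions, so the required truncation functors can be built componentwise and glued via this fracture.

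T-exactness of $\rho_\ell$ is then essentially tautological: by construction, an object lies in $\leqslant 0$ (resp.\ $\geqslant 0$) for the integral t-structure iff its $\ell$-adic completion does, and the latter coincides via rigidity with the $\ell$-adic realization being in $\leqslant 0$ (resp.\ $\geqslant 0$) for the ordinary t-structure on the target. The main obstacle, as flagged in the introduction, is that Artin truncation generally fails to preserve constructibility with integral coefficients, so one must verify that the componentwise truncations glue back into a constructible $0$-motive rather than a pathological ind-object. The remedy is to use the explicit description of the torsion pieces as honest subcategories of $\Db(S_{\et},\Z/\ell^k)$, combined with the hypotheses that $S$ is noetherian excellent finite dimensional admitting resolution of singularities by alterations, which ensure the finiteness properties of the six functor formalism needed for the reassembly to remain inside $\DM^0_{\et}(S,\Z)$.
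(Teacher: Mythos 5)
There is a genuine gap, and it sits exactly where you wave your hands at the end: the claim that ``the required truncation functors can be built componentwise and glued via this fracture'' and that the reassembled object stays in $\DM^0_{\et}(S,\Z)$ because of ``finiteness properties of the six functor formalism.'' That last step is not a verification one can outsource to general finiteness of the formalism --- it is the whole content of the theorem. With integral coefficients the naive rational-plus-torsion reassembly is precisely what breaks down (this is the pathology flagged in the introduction: the Artin truncation $\omega^0$ almost never preserves constructible objects, \cite[Proposition~2.2.1]{AM2}); a t-structure defined by conditions on $M\otimes_\Z\Q$ and on the $\ell$-adic completions a priori lives on a completed or Ind-category, and its truncations have no reason to be geometric. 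There are also unaddressed technical points inside the fracture argument itself: the glued truncation requires compatibility of the rational truncation with the rationalization of the product over \emph{all} primes of the $\ell$-adic truncations (truncation does not commute with infinite products in general), and even granting this the fracture square only produces an object of the glued category, so one still needs an argument that it is a constructible $0$-motive --- which is the statement being proved.

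The actual proof in \cite{AM1} takes a different route, the one sketched in the introduction and mimicked in this paper for $1$-motives: one first defines a candidate t-structure on Ind-$0$-motives by a family of generators, shows via the rigidity theorem that on torsion objects it induces the ordinary t-structure on torsion étale sheaves (your use of rigidity is the correct ingredient here, and this is what ultimately gives t-exactness of $\rho_\ell$ together with the rational theorem), and then proves that truncations of geometric objects remain geometric by a dévissage: localization at a prime, stratification, continuity and resolution of singularities by alterations reduce to smooth $0$-motives over a regular base, where one has the explicit equivalence $\DM^{\sm0}_{\et}(S,\Z')\simeq \D_{\mathrm{lisse}}(S_{\et},\Z')$ and the t-structure visibly restricts. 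This explicit description of the smooth part over a regular base, together with the geometric dévissage that reduces to it, is the missing idea in your proposal; the hypothesis of resolution by alterations is used there, not merely to guarantee six-functor finiteness.
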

This result relies on understanding how the candidate t-structure on Ind-$0$-motives\footnote{In this paper, this will always be read as (Ind-$0$)-motives, meaning colimits of (geometric) $0$-motives, and not as the Ind-category of $0$-motives which does not embed in étale motives in general.} interacts with the canonical t-structure on torsion étale sheaves through the rigidity theorem and on an explicit description of the subcategory $\DM^{\sm0}_{\et}(S,\Z')$ of smooth $0$-motives for $S$ regular and for $\Z'$ the localization of $\Z$ at all the residue characteristic exponents of $S$: the latter is equivalent to the category $\D_\mathrm{lisse}(S_{\et},\Z')$ of dualizable sheaves on the small étale site which has a canonical t-structure that we can compare to the t-structure on Ind-$0$-motives. 

Such an explicit description of smooth $1$-motives doesn't seem to exist at the moment. However, in \cite{haas}, Haas proved the following result:
\begin{theorem*}Let $S$ be a noetherian $\Q$-scheme of finite dimension or a Dedekind scheme, then the functor $\Phi_S$ induces an equivalence:
\[\Phi_S\colon\rmm_1^\D(S,\Q)\xrightarrow{\sim} \DM^{\sm1}_{\et}(S,\Q)\cap \DM^{\mathrm{ind}1}_{\et}(S,\Q)^\heart\]
where $\DM^{\sm1}_{\et}(S,\Q)$ is the category of dualizable $1$-motives. Furthermore, the category $\rmm_1^\D(S,\Q)$ is abelian.
\end{theorem*}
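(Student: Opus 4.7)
The plan is to establish essential surjectivity of $\Phi_S$ by descent from the generic fiber; fully faithfulness and the construction of $\Phi_S$ are already provided by the previous theorem of \cite{plh,plh2,vaish}, and the abelian structure will follow formally at the end.

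First, I would show that the essential image of $\Phi_S$ lies in $\DM^{\sm1}_{\et}(S,\Q)$, namely that the motive of a rational Deligne $1$-motive $[L \to G]$ is dualizable in $\DM^{\mathrm{ind}1}_{\et}(S,\Q)$. The motive of a lattice $L$ is dualizable because $L$ becomes constant of finite rank after a finite étale cover of $S$; the motive of a semi-abelian $S$-scheme $G$ is dualizable since both abelian schemes and tori have dualizable motives; dualizability is preserved under cones, so $\Phi_S([L\to G])$ is dualizable and lies in the heart by construction.

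The central step is essential surjectivity. Given $M$ in the intersection, at each generic point $\eta$ of $S$, the classical result over fields gives a Deligne $1$-motive $N_\eta = [L_\eta \to G_\eta]$ with $\Phi_\eta(N_\eta) \simeq M_\eta$. The task is to extend $N_\eta$ coherently to all of $S$. Dualizability of $M$ forces $L_\eta$ to come from a global locally constant étale sheaf $L$ of finite $\Q$-rank, by the same analysis as in the zero-motive setting. To extend the semi-abelian part: in the $\Q$-scheme case, generic smoothness spreads $G_\eta$ out to a dense open subscheme, and one then globalizes by étale descent along an alteration cover (de Jong); in the Dedekind case, Raynaud's theory of Néron models extends $G_\eta$ canonically to a semi-abelian $S$-scheme on the open locus of good reduction, which exhausts $S$ up to isogeny with rational coefficients. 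The dualizability hypothesis on $M$, combined with the local form of the equivalence at each point of $S$, ensures that the resulting $[L \to G]$ recovers $M$ and not just its restriction to an open subset.

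For the abelian structure on $\rmm_1^\D(S,\Q)$, once the equivalence with a full subcategory of the abelian heart is established, it suffices to check stability under kernels and cokernels. With rational coefficients, morphisms of Deligne $1$-motives admit kernels and cokernels up to isogeny as in \cite{bvk}, and these operations preserve dualizability, so the claim follows. The main obstacle will be the extension of $G_\eta$ in the Dedekind case: controlling the behavior at points of bad reduction requires a careful interplay between Néron model theory and the motivic dualizability constraint to produce a genuine semi-abelian extension over all of $S$ compatible with the given $M$.
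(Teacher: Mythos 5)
There is a genuine gap, and it sits exactly where you flag it yourself at the end: the extension of the semi-abelian part $G_\eta$ to all of $S$. The actual argument (Haas's, and the generalization proved in this paper as \Cref{DMsm1t} via \Cref{bonnereduction}) does not spread $G_\eta$ out to a dense open and then "globalize by étale descent along an alteration cover", nor does it rely on the good-reduction locus "exhausting $S$ up to isogeny" — neither of these works, because bad reduction of an abelian variety is not removed by alterations, by étale descent, or by passing to rational coefficients. The key missing idea is a \emph{good reduction criterion}: a Deligne $1$-motive over $\eta$ extends to $S$ if and only if its $\ell$-adic Tate modules extend to local systems on $S[1/\ell]$ for all relevant $\ell$; this is where Grothendieck's extension theorem in characteristic zero \cite{Grothendiecketendable} and the Néron--Ogg--Shafarevich criterion \cite{Serre-Tate} in the Dedekind case enter. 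Dualizability of $M$ is used precisely here: since $M$ is dualizable and lies in the heart, $\overline{\rho}_\ell(M)$ is a lisse sheaf on $S[1/\ell]$ whose fiber at $\eta$ is $T_\ell(N_\eta)$, so the Tate modules are unramified and the criterion applies. Your proposal never makes this link, and without it the Dedekind case (and, for the toric and lattice parts interacting with the abelian part, even the $\Q$-scheme case) does not close.

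A second, smaller gap: even granting that some $[L\to G]$ over $S$ restricts to $N_\eta$, you still must show $\Phi_S([L\to G])\simeq M$ and not merely that the two agree over $\eta$ (or over a dense open). This requires knowing that restriction to the generic point is fully faithful on $\DM^{\sm1}_{\et}(S,\Q)\cap \DM^{\mathrm{ind}1}_{\et}(S,\Q)^{\heart}$ — in the paper this is \Cref{embeddingDM1}, proved by a degree estimate on $\omega^1 j_* j^*$ using absolute purity and a stratification; your appeal to "the local form of the equivalence at each point" does not substitute for it. Similarly, the abelianness of $\rmm_1^\D(S,\Q)$ is not a formal consequence of "kernels and cokernels up to isogeny as in \cite{bvk} preserve dualizability": \cite{bvk} works over a field, and the point is that kernels and cokernels computed over $\eta$ again have good reduction, i.e. that $\rmm_1^\D(S,\Q)$ is a Serre subcategory of $\rmm_1^\D(\eta,\Q)$ — which again rests on the good reduction criterion you are missing (\Cref{abeliancat}).
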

This result in fact implies that the t-structure restricts to $\DM^{\sm1}_{\et}(S,\Q)$ as we will see in the proof of \Cref{DMsm1t} below because $\DM^{\sm1}_{\et}(S,\Q)\cap \DM^{\mathrm{ind}1}_{\et}(S,\Q)^\heart$ is then closed under kernel and cokernel (the only thing to check is that the functor induced by $\Phi_S$ is exact). More recently, Haas' approach has been used to prove that Nori $0$-motive coincide with Voevodsky $0$-motives over a base scheme in \cite{Swann0mot}.

We can therefore devise a strategy to prove the existence of the motivic t-structure on $\DM^1_{\et}(S,\Z)$, with the same restrictive hypothesis as Haas. It goes as follows: first define a t-structure on Ind-$1$-motives, then prove that it restricts to smooth $1$-motives by comparing them to Deligne $1$-motives and finally mimic the proof of \cite{AM1}. Along the way, we will prove some results on Deligne $1$-motives that we think to be interesting for themselves, most notably that they form an abelian category up to inverting the characteristic exponents of $S$.

Let us now describe the content of this paper in more details. 

\subsection*{Deligne \texorpdfstring{$1$}{1}-motives with torsion}

We first define the category of Deligne $1$-motives with torsion over an arbitrary noetherian base scheme $S$ (see \Cref{defdel1mot}). To that end, we start from the category $\widetilde{\rmm}_1^\D(S)$ of \emph{effective Deligne $1$-motives}, that is complexes of the form $[L\to G]$ for $L$ a discrete group scheme and $G$ semi-abelian. We then localize at the class of maps which are quasi-isomorphisms of complexes to obtain the category $\rmm_1^\D(S)$ of Deligne $1$-motives with torsion. This category contains Deligne's category of $1$-motives from \cite{hodgeIII} (see \Cref{usualDel}). Our definition is slightly more restrictive than that of \cite{jossen} but it coincides with \cite{bvk,BVRS} over a field. The category we define is \textit{a priori} an $\infty$-category but we prove that it is in fact a $1$-category
(\Cref{ComputationHomM1D}) by proving that the localization that we consider is a Gabriel-Zisman localization. 
We end the first section by defining the category $\rmm_1^\D(S,\Lambda)$ of Deligne $1$-motives with coefficients in a flat $\Z$-algebra $\Lambda$.

\subsection*{Good reduction for Deligne \texorpdfstring{$1$}{1}-motives with torsion}
Over a connected normal scheme, we compare Deligne $1$-motives to Deligne $1$-motives over the generic point. Let us denote by $\Z'$ the ring $\Z[1/p\mid p \text{ residue characteristic exponent of }S].$ The following result generalizes \cite[Proposition~A.11]{plh}.
\begin{theorem*}(\Cref{embeddinggeneric2})
    Let $S$ be a connected normal noetherian scheme of finite dimension with generic point $\eta$ and let $\Lambda$ be a flat $\Z'$-algebra. 
Then, the restriction to the generic point 
\[\rmm_1^\D(S,\Lambda)\to \rmm_1^\D(\eta,\Lambda)\]
is fully faithful.
\end{theorem*}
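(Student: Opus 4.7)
The plan is to compute the Hom sets on both sides using the Gabriel--Zisman description of $\rmm_1^\D$ from \Cref{ComputationHomM1D}: every morphism between effective Deligne $1$-motives in $\rmm_1^\D$ is represented by a fraction $M \xleftarrow{q} M'' \xrightarrow{f} N$ with $q$ a quasi-isomorphism of complexes. Since restriction to the generic point preserves effective $1$-motives and quasi-isomorphisms, it suffices to show that, for effective $M$ and $N$ over $S$, the induced map on Hom sets is bijective.

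For injectivity, I would reduce to three comparison results valid over the normal connected noetherian base $S$. First, for discrete group schemes $L, L'$ of finite type over $S$, the restriction $\Hom_S(L, L') \to \Hom_\eta(L_\eta, L'_\eta)$ is bijective because such group schemes correspond to continuous $\pi_1^{\et}(S, \bar\eta)$-modules and the surjection $\pi_1^{\et}(\eta, \bar\eta) \twoheadrightarrow \pi_1^{\et}(S, \bar\eta)$ coming from the normality of $S$ makes restriction fully faithful. Second, for semi-abelian schemes $G, G'$ over $S$, $\Hom_S(G, G') \to \Hom_\eta(G_\eta, G'_\eta)$ is bijective: the abelian case follows by extending the graph as an abelian subscheme over a normal base, the toric case reduces to the first comparison via character lattices, and the general semi-abelian case follows by dévissage along the torus-abelian extension. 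Third, $\Hom_S(L, G') \to \Hom_\eta(L_\eta, G'_\eta)$ is bijective by separatedness of $G'$ over the reduced scheme $S$. Together, these three bijections show that two morphisms of effective $1$-motives over $S$ agreeing generically are equal in $\widetilde{\rmm}_1^\D(S,\Lambda)$, hence in $\rmm_1^\D(S,\Lambda)$.

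For surjectivity, given a fraction $M_\eta \xleftarrow{q_\eta} M''_\eta \xrightarrow{f_\eta} N_\eta$ representing a morphism $\phi \colon M_\eta \to N_\eta$, I would extend the whole fraction to $S$. Using the explicit description of quasi-isomorphisms in $\widetilde{\rmm}_1^\D$, one may arrange that $M''_\eta$ is obtained from $M_\eta = [L_\eta \to G_\eta]$ by enlarging the lattice to $L''_\eta$ with $L''_\eta / L_\eta$ a finite subgroup scheme of $G_\eta$. Because $\Lambda$ is a flat $\Z'$-algebra, this finite subgroup has order invertible on $S$, hence is étale, and so extends uniquely to a finite étale subgroup scheme of $G$ over $S$. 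This yields an effective model $M'' = [L'' \to G]$ over $S$ together with a quasi-isomorphism $q \colon M'' \to M$ extending $q_\eta$. The morphism $f \colon M'' \to N$ is then produced from $f_\eta$ by applying the three bijections of the injectivity step to its lattice, semi-abelian, and mixed components, and its compatibility with the differentials is automatic since it holds generically and the mixed comparison is injective.

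The principal difficulty lies in the semi-abelian comparison of the injectivity step: one must know that a homomorphism $G_\eta \to G'_\eta$ between semi-abelian generic fibers extends to a homomorphism over $S$. This is precisely where the normality of $S$ is essential and where one applies the classical rigidity of abelian schemes combined with the control of tori via their character lattices; it also ensures, via extension of closed subgroup schemes across the normal base, that the quasi-isomorphism $q \colon M'' \to M$ produced in the surjectivity step is a genuine quasi-isomorphism over $S$ rather than merely at the generic point.
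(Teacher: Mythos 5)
Your injectivity step follows the same lines as the paper (extension of homomorphisms of local systems via $\Gal(\bar\eta/\eta)\twoheadrightarrow\pi_1^{\et}(S,\bar\eta)$, extension of homomorphisms of semi-abelian schemes over a normal base, and a uniqueness statement for maps $\rho_\sharp(L)\to G'$), but your fullness step has a genuine gap, and it sits exactly where the paper has to work. By \Cref{diagramchase}, a quasi-isomorphism onto $M=[\rho_\sharp(L)\to G]$ is necessarily of the form $[\rho_\sharp(L\times_G\widetilde{G})\to \widetilde{G}]\to[\rho_\sharp(L)\to G]$ with $\widetilde{G}\to G$ an isogeny of semi-abelian schemes with finite étale kernel: it does \emph{not} keep the semi-abelian part and enlarge the lattice by a finite subgroup of $G$ (a map $[L''\to G]\to[L\to G]$ with $L\subseteq L''$ is not even a map of complexes in the needed direction, and in the other direction it is not a quasi-isomorphism, since the cokernel on lattices is nonzero while the cokernel on semi-abelian parts vanishes). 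So the denominator of a fraction over $\eta$ a priori involves a \emph{new} semi-abelian variety $\widetilde{G}_\eta$ isogenous to $G_\eta$, and nothing at this stage guarantees that it extends to $S$ (good reduction of isogenous objects is only established later, under extra hypotheses). The whole point of the $\Z'$-coefficient hypothesis is the normal form of \Cref{HomcoeffZ'}: every denominator may be dominated by $[\rho_\sharp(L)\times_{G,[n]}G\to G]\xrightarrow{(\mathrm{pr},[n])}M$ with $n$ invertible on $S$, whose proof requires splitting $\ker(\varphi)$ into a part of invertible order and a part supported at the residue characteristics, checking that the quotient by the latter is still semi-abelian and is killed by $\otimes_\Z\Z'$, factoring $[n]$ through $\varphi$, and verifying that $[n]$ is finite étale. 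Your one-line appeal to ``$\Lambda$ is a flat $\Z'$-algebra, hence the kernel has order invertible on $S$'' does not supply this: over a $\Z_{(p)}$-scheme the kernel can perfectly well have order divisible by $p$, and discarding that part is an argument, not an observation. Once the multiplication-by-$n$ normal form is in place, the denominator is already defined over $S$ and only the numerator has to be extended, which is then done exactly as you propose.

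Two further points. Your comparison (c) is asserted as a bijection ``by separatedness'', but surjectivity is false: already for $L=\Z$ and $G'=\Gm$ over a discrete valuation ring, $\Hom_S(\Z,\Gm)=\ocal_S^\times(S)$ is strictly smaller than $k(\eta)^\times$. Only injectivity holds, and only injectivity is needed (to get commutation of the extended square); moreover this injectivity is not a one-word matter in the paper, which argues for the abelian part via spreading out, Néron models at the codimension-one points of the normal base, and Raynaud's extension theorem --- if you want a separatedness argument you must at least record that $\rho_\sharp(L)$ is representable by an étale, hence reduced, $S$-group scheme whose generic fibre is topologically dense, so that the equalizer with the zero section is everything. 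Finally, deducing faithfulness of the localized functor from faithfulness on effective maps uses that the index categories of denominators differ over $S$ and over $\eta$; it goes through because quasi-isomorphisms are componentwise surjective (so composing with them is injective on effective $\Hom$'s) together with the Ore condition of \Cref{composition}, and this should be said rather than treated as immediate.
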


It is known from \cite{bvk} that $\rmm_1^\D(\eta,\Z')$ is an abelian category. Understanding when a Deligne $1$-motive with torsion at $\eta$ extends to $S$ (that is has good reduction), is a crucial point in Haas' proof that $\rmm_1^\D(S,\Q)$ is an abelian category. We extend his \cite[Theorem~4.10]{haas} to the case of $\Z'$-coefficients:

\begin{prop*}(\Cref{bonnereduction} and \Cref{abeliancat})
Let $S$ be a connected normal scheme with generic point $\eta$ of characteristic exponent $p$ and let $\Lambda$ be a flat $\Z'$-algebra. Assume that $S$ is either a $\Q$-scheme or a Dedekind scheme. 
A Deligne $1$-motive with coefficients $\Lambda$ over $\eta$ has good reduction if and only if for any prime number $\ell\neq p$, its $\ell$-adic Tate module has good reduction on $S[1/\ell]$. 

Hence, if $\Lambda$ is a localization of $\Z'$, the category $\rmm_1^\D(S,\Lambda)$ is a Serre subcategory of $\rmm_1^\D(\eta,\Lambda)$. In particular, it is an abelian category.
\end{prop*}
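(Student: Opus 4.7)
\emph{Proof plan.} The plan is to bootstrap from Haas' rational result \cite[Theorem~4.10]{haas} to $\Z'$-coefficients using the $\ell$-adic Tate module as a bridge, and then to derive the Serre subcategory statement from the good reduction criterion. Reducing first to $\Lambda = \Z'$ via flatness of $\Lambda$, the ``only if'' direction is immediate: if $M$ extends to $\tilde M$ over $S$, compatibility of $\ell$-adic realization with base change identifies $T_\ell(M)$ with $T_\ell(\tilde M)_{\mid \eta}$, which is constructible on $S[1/\ell]$.

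For the converse, write $M = [L \xrightarrow{u} G]$ with $0 \to T \to G \to A \to 0$, and apply Haas to $M_\Q$ to obtain a rational extension $\tilde M_\Q$ over $S$. I then extend each piece integrally. The abelian variety $A$ extends to an abelian scheme $\tilde A$ by the integral N\'eron-Ogg-Shafarevich criterion (over a Dedekind base) or the corresponding normality argument used by Haas (over a $\Q$-scheme), since $T_\ell(A)$ is a subquotient of the unramified $T_\ell(M)$; the torus $T$ likewise extends because its character lattice is a subquotient of the Tate module and so unramified. Writing $L = L_{\mathrm{fr}} \oplus L_{\mathrm{tors}}$, the free part extends because its mod-$\ell^n$ reductions appear as subquotients of $T_\ell(M)/\ell^n$, while the $\ell$-primary torsion ($\ell \neq p$) is a direct summand of the torsion in $T_\ell(M)$ and hence unramified. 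The semi-abelian scheme $\tilde G$ is reconstructed from $\tilde A$ and $\tilde T$ by comparing $\mathrm{Ext}^1$-groups over $S$ and $\eta$ using normality, and the morphism $u$ extends uniquely to $\tilde u\colon \tilde L \to \tilde G$ by the Weil-type extension theorem for morphisms from a lattice to a semi-abelian scheme over a normal base.

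For the Serre subcategory statement, \Cref{embeddinggeneric2} realizes $\rmm_1^\D(S, \Lambda)$ as a full subcategory of $\rmm_1^\D(\eta, \Lambda)$, the latter being abelian by \cite{bvk} for $\Lambda = \Z'$ and then for any localization $\Lambda$ of $\Z'$ by flatness. Given a short exact sequence $0 \to M' \to M \to M'' \to 0$ in $\rmm_1^\D(\eta, \Lambda)$, the induced sequences of $\ell$-adic Tate modules are exact, and constructible sheaves on $S[1/\ell]$ are closed under subobjects, quotients and extensions inside those on $\eta$; the good reduction criterion therefore passes from any two of $M', M, M''$ to the third, yielding closure of $\rmm_1^\D(S, \Lambda)$ under kernels, cokernels and extensions. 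The main obstacle will lie in the second paragraph: rigorously verifying that the Tate module data determines integral extensions uniformly across both base types, and that the $\mathrm{Ext}^1$-comparison used to reconstruct $\tilde G$ does not lose torsion information in the passage from rational to $\Z'$-coefficients.
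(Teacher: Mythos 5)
Your reduction to $\Lambda=\Z'$, the ``only if'' direction, and the deduction of the Serre subcategory statement from the Tate-module criterion (via the fact that $\Loc_{S[1/\ell]}$ is a Serre subcategory of $\Loc_\eta$ --- note it is \emph{lisse} sheaves, not merely constructible ones, that are relevant here, since good reduction of the Tate module means extending to a local system) are all in line with the paper. The gap is in the converse of the good reduction criterion, precisely at the point where torsion in $L$ enters --- which is the whole content of the generalization beyond \cite[Theorem~4.10]{haas}. You extend the pieces $A$, $T$, $L$ and then invoke ``the Weil-type extension theorem for morphisms from a lattice to a semi-abelian scheme over a normal base'' to extend $u\colon \rho_\sharp(L)\to G$. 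But $L$ is not a lattice: after \Cref{eliminatingptorsion} it still has torsion prime to the residue characteristics, and extending the restriction of $u$ to this torsion (equivalently, spreading out torsion-valued points of $G$ over finite étale covers of $\eta$, compatibly with the possibly non-split extension $0\to L_{\mathrm{tors}}\to L\to L_{\mathrm{fr}}\to 0$ --- your direct sum decomposition is not Galois-equivariant in general) is exactly what the cited lattice statement does not give. The paper handles this by choosing a resolution $L_2\to L_1\to L\to 0$ with $L_i$ torsion-free, extending the torsion-free $1$-motives $[(L_i)_\eta\to G_\eta]$ by Haas, and using faithfulness over the generic point (\Cref{embeddinggeneric}) to show the extended map $\rho_\sharp(L_2)\to G$ vanishes, so that $\rho_\sharp(L_1)\to G$ factors through $\rho_\sharp(L)$. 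Some such argument is needed; as written your proof of the converse is incomplete at its decisive step.

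Two further points where your route diverges from the paper and leaves work undone. First, reconstructing $\widetilde G$ from $\widetilde A$ and $\widetilde T$ by an $\mathrm{Ext}^1$-comparison is the uncertainty you flag yourself: one must show the class of $G$ in $\mathrm{Ext}^1_\eta(A,T)$ lifts to $\mathrm{Ext}^1_S(\widetilde A,\widetilde T)$, which via Weil--Barsotti reduces to extending $\eta$-points of the dual abelian scheme over a normal base (true, by the Néron-model/normality argument appearing in the proof of \Cref{embeddinggeneric2}, but it must be carried out). The paper avoids this entirely by applying Haas' theorem to extend the semi-abelian $G_\eta$ over each $S[1/\ell]$ and then gluing these extensions over varying $\ell$ using the full faithfulness of restriction to $\eta$ (\Cref{embeddinggeneric2}); note that in the mixed-characteristic Dedekind case no single $\ell$ is invertible on all of $S$, so either your pointwise Néron-model argument or such a gluing step has to be made explicit rather than asserting an extension ``over $S$'' in one stroke.
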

This proposition relies notably on Grothendieck's theorem on extension of abelian varieties in characteristic zero \cite{Grothendiecketendable} and on the criterion of Néron Ogg and Shafarevich of \cite{Serre-Tate}.

\subsection*{Voevodsky \texorpdfstring{$1$}{1}-motives}
The goal of this section is to compare Deligne $1$-motives to Voevodsky motives. We first prove that the $\ell$-adic realization functors form a  conservative family on $1$-motives (\Cref{conservativityconj}). We then define a functor $\Phi_S\colon \rmm_1^\D(S,\Lambda)\to \DM_{\et}(S,\Lambda).$ 
We then generalize \cite[Proposition~2.15]{plh}:
\begin{theorem*}(\Cref{Delignemotare1mot})
    The essential image of $\Phi_S$ is contained in the category $\DM_{\et}^{\sm1}(S,\Lambda)$ of smooth $1$-motives.
\end{theorem*}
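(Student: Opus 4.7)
The strategy is to show that for each effective Deligne $1$-motive $[L\to G]$, the image $\Phi_S([L\to G])$ is dualizable in $\DM_{\et}(S,\Lambda)$. Since the full subcategory of dualizable objects is thick (closed under shifts, retracts, and cofibers) in the symmetric monoidal $\infty$-category $\DM_{\et}(S,\Lambda)$, and $\Phi_S([L\to G])$ is constructed from a distinguished triangle involving the motives of $L$ and $G$, it suffices to show that $M(L)$ and $M(G)$ are dualizable.

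For the lattice part $L$, the discrete group scheme $L$ is étale-locally of finite type over $S$, and since $\Lambda$ is a flat $\Z'$-algebra, any torsion occurring in $L$ becomes invertible at the residue characteristics of $S$. Hence $L\otimes\Lambda$ defines a dualizable object in $\D_\mathrm{lisse}(S_\et,\Lambda)$. Via the identification between smooth $0$-motives and dualizable lisse étale sheaves from \cite{AM1}, this yields $M(L)\in \DM^{\sm0}_{\et}(S,\Lambda)\subset \DM^{\sm1}_{\et}(S,\Lambda)$. For the semi-abelian part $G$, the canonical extension $0\to T\to G\to A\to 0$ produces a triangle $M(T)\to M(G)\to M(A)$ in $\DM_{\et}(S,\Lambda)$; the abelian case $M(A)$ is classical, e.g.\ by Poincaré duality for abelian schemes.

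The main obstacle is thus the torus case. I would choose a finite étale Galois cover $p\colon S'\to S$ trivialising $T$, so that $T_{S'}\simeq \Gm^r$ and $M(T_{S'})\simeq (\un\oplus\un(1)[1])^{\otimes r}$ is manifestly dualizable over $S'$; dualizability then descends to $S$ using that $p^*$ is symmetric monoidal, conservative, and compatible with internal Hom for $p$ finite étale, so that the candidate dual $\sHom(M(T),\un)$ satisfies the coevaluation and evaluation identities after pullback to $S'$ and hence before. Alternatively, one can describe $M(T)$ intrinsically in terms of the cocharacter lattice $X_*(T)$ as a sum of Tate twists of its exterior powers and invoke the lattice case above. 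Combining the three cases gives $M(G)$ dualizable, and the thickness argument then yields $\Phi_S([L\to G])\in\DM^{\sm1}_{\et}(S,\Lambda)$.
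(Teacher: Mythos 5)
There is a genuine gap, and it sits exactly where the paper's real work is. By \Cref{def AM}, $\DM^{\sm1}_{\et}(S,\Lambda)$ is the \emph{intersection} of the dualizable objects with $\DM^{1}_{\et}(S,\Lambda)$, the thick subcategory generated by the $f_\sharp(\Lambda_X)$ for $f\colon X\to S$ smooth of relative dimension at most $1$. Your argument only addresses dualizability and never shows that $\Phi_S([L\to G])$ is a geometric $1$-motive in this sense. With rational coefficients that membership follows from the classical decomposition of the motive of an abelian scheme, but integrally it is the hard point: the paper handles it by reducing to the generic point, invoking Katz's space-filling curve theorem to produce a relative curve whose Jacobian surjects onto $A$ with finite étale kernel, decomposing the motive of that curve (\Cref{curvedecomposition}), and then spreading out by continuity. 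None of this is replaced by anything in your proposal, so even granting all your dualizability claims you have not placed the image in $\DM^{\sm1}_{\et}(S,\Lambda)$.

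A second, related problem is that you appear to conflate the motive of the underlying \emph{scheme} of a group scheme with the object actually occurring in $\Phi_S$, namely $\Sigma^\infty L_{\AAA}$ applied to the \emph{sheaf} it represents. For a split torus of rank $r$ the relevant object is $(\Lambda(1)[1])^{\oplus r}$ (equivalently $X_*(T)(1)[1]$, as in \Cref{toruscase}), not $(\un\oplus\un(1)[1])^{\otimes r}$, and there is no ``sum of Tate twists of exterior powers'' here. Likewise, Poincaré duality gives dualizability of $f_\sharp\Lambda_A$ for the smooth proper scheme $A$, not of $\Sigma^\infty L_{\AAA}(A)$; integrally one cannot split off the $h_1$-part, so ``the abelian case is classical'' does not stand. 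The paper's route is to prove first that $\Sigma^\infty L_{\AAA}(A)$ is geometric and only then test dualizability after $-\otimes_\Z\Q$ and $-\otimes_\Z\Z/\ell\Z$ (a reduction which itself requires compactness), using that rationally it is a summand of the motive of $A$ and mod $\ell$ it is the local system ${}_\ell A$. Your finite étale descent argument for the torus and your treatment of the lattice part are fine in spirit (though $\Lambda$ is only assumed flat over $\Z$, not over $\Z'$), but the abelian part and the geometricity statement are missing, and they are the substance of the theorem.
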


\subsection*{The ordinary t-structure on Ind-\texorpdfstring{$1$}{1}-motives}

We then define the motivic t-structure on Ind-$1$-motives. It is defined by imposing a set of non-positive generators given by the images of Deligne $1$-motives in Voevodsky motives. Over a field $k$, this t-structure induces a t-structure on the category $\DM_{\et}^1(k,\Z[1/p])$ of $1$-motives which is the same as the t-structure given by the equivalence with $\D^b(\rmm_1^\D(k,\Z[1/p]))$ of
 \cite{bvk} (\Cref{bvkvsord}). By showing how the t-structure interacts with torsion objects  (\Cref{torsioninduced}), we prove that the t-structure is compatible with pullbacks and with the $\ell$-adic realization on the subcategory of $1$-motives (\Cref{f^* ordinary}). In particular $\Phi_S$ sends Deligne $1$-motives with torsion to the heart of the motivic t-structure (\Cref{Deligne1motareinheart}).

\subsection*{The motivic t-structure on $1$-motives}
In the finial section, we prove that the motivic t-structure restricts to $1$-motives. The first step is the case of smooth $1$-motives over a regular base. 
\begin{theorem*}(\Cref{DMsm1t})
Let $S$ be a regular scheme of finite dimension which is either a $\Q$-scheme or a Dedekind scheme and let $\Lambda$ be localization of $\Z'$. Then, $\Phi_S$ induces an equivalence:
    \[\Phi_S\colon\rmm_1^\D(S,\Lambda)\xrightarrow{\sim}\DM^{\sm1}_{\et}(S,\Lambda)\cap \DM^{\mathrm{ind}1}_{\et}(S,\Lambda)^\heart.\]
In particular, the motivic t-structure induces a t-structure on $\DM^{\sm1}_\et(S,\Lambda)$.
\end{theorem*}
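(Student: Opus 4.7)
The plan is to reduce everything to the generic point $\eta$, combining the field-coefficient theorem from the introduction with the good reduction criterion \Cref{bonnereduction}, and then to upgrade the resulting equivalence to a t-structure statement.

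For full faithfulness of $\Phi_S$, I would exploit that $\Phi_\eta$ is fully faithful by the field case, that $\rmm_1^\D(S,\Lambda)\to \rmm_1^\D(\eta,\Lambda)$ is fully faithful by \Cref{embeddinggeneric2}, and that Hom groups in $\DM^{\mathrm{ind}1}_\et(S,\Lambda)^\heart$ between objects in the image of $\Phi_S$ inject into their analogues over $\eta$ via compatibility of the motivic t-structure with pullbacks (\Cref{f^* ordinary}) together with conservativity of $\ell$-adic realizations on $1$-motives (\Cref{conservativityconj}). These three inputs should assemble formally into full faithfulness of $\Phi_S$.

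For essential surjectivity, let $M\in \DM^{\sm1}_\et(S,\Lambda)\cap \DM^{\mathrm{ind}1}_\et(S,\Lambda)^\heart$. Its restriction $M_\eta$ lies in $\DM^{\mathrm{ind}1}_\et(\eta,\Lambda)^\heart$ by \Cref{f^* ordinary}, and is a geometric $1$-motive, so by the field case it is of the form $\Phi_\eta(N_\eta)$ for a unique $N_\eta\in \rmm_1^\D(\eta,\Lambda)$. Dualizability of $M$ forces each of its $\ell$-adic realizations to be lisse on $S[1/\ell]$; by construction of $\Phi$, this realization identifies with the $\ell$-adic Tate module of $N_\eta$ extended to a lisse sheaf on $S[1/\ell]$. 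The good reduction criterion \Cref{bonnereduction} then produces an extension $N\in \rmm_1^\D(S,\Lambda)$ of $N_\eta$, and full faithfulness yields $\Phi_S(N)\simeq M$.

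For the last assertion about the t-structure, $\rmm_1^\D(S,\Lambda)$ is abelian by \Cref{abeliancat}, so once I check that $\Phi_S$ is exact (which I would verify after restriction to $\eta$, where exactness reduces to the field case), the intersection $\DM^{\sm1}_\et(S,\Lambda)\cap \DM^{\mathrm{ind}1}_\et(S,\Lambda)^\heart$ will be closed under kernels and cokernels inside $\DM^{\mathrm{ind}1}_\et(S,\Lambda)^\heart$. The standard criterion for restriction of a t-structure to a triangulated subcategory generated by its heart then gives the desired t-structure on $\DM^{\sm1}_\et(S,\Lambda)$. I expect the essential surjectivity step to be the main obstacle, since it is precisely there that \Cref{bonnereduction} (and hence the hypotheses on $S$ and the inversion of the residue characteristics by $\Lambda$) genuinely intervenes; everything else is formal once the equivalence with Deligne $1$-motives is in hand.
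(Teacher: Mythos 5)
Your overall strategy (reduce to the generic point, use \Cref{bonnereduction} for essential surjectivity, then use abelianness and boundedness to restrict the t-structure) is the same as the paper's, but there is a genuine gap at the step you dismiss as formal: the injectivity of
\[\Hom_{\DM_{\et}(S,\Lambda)}(\Phi_S(M),\Phi_S(N))\to \Hom_{\DM_{\et}(\eta,\Lambda)}(\Phi_\eta(M_\eta),\Phi_\eta(N_\eta)),\]
and more generally the full faithfulness of $\eta^*$ on $\DM^{\sm1}_{\et}(S,\Lambda)\cap \DM^{\mathrm{ind}1}_{\et}(S,\Lambda)^\heart$. Neither \Cref{f^* ordinary} nor \Cref{conservativityconj} gives this: conservativity of the $\ell$-adic realizations controls when an object (or the cone of a map) vanishes, not when a map dies after restriction to a dense open, and $\eta^*$ is very far from conservative on the Ind-heart since it kills everything supported on the closed complement. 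Concretely, a map $\Phi_S(M)\to\Phi_S(N)$ vanishing at $\eta$ factors through $i_*\omega^1 i^!\Phi_S(N)$ via the colocalization triangle, and ruling out such factorizations is exactly the content of \Cref{embeddingthm} and \Cref{embeddingDM1} in the paper: one must show that $i_*\omega^1 i^!$ of a dualizable object of the heart sits in degrees at least $2$, which requires absolute purity, a stratification of the closed complement, and the compatibility of $\omega^1$ with rationalization and torsion coefficients, together with degree estimates on the torsion part. This is the technical heart of the theorem and cannot be obtained formally from t-exactness of pullbacks plus conservativity of realizations.

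The same missing input resurfaces twice more in your proposal. In the essential surjectivity step, after producing $N\in\rmm_1^\D(S,\Lambda)$ extending $N_\eta$ via \Cref{bonnereduction}, you conclude ``full faithfulness yields $\Phi_S(N)\simeq M$''; but full faithfulness of $\Phi_S$ only compares Deligne $1$-motives with each other, whereas here you must transport the isomorphism $\eta^*\Phi_S(N)\simeq \eta^*M$ to a map over $S$, which again requires fullness of $\eta^*$ on $\DM^{\sm1}_{\et}(S,\Lambda)\cap \DM^{\mathrm{ind}1}_{\et}(S,\Lambda)^\heart$ (\Cref{embeddingDM1} plus continuity). Finally, for the exactness of $\Phi_S$ needed to restrict the t-structure, checking ``after restriction to $\eta$'' is not enough for the same reason ($\eta^*$ is not conservative on the Ind-heart); the paper instead checks exactness with the conservative t-exact family of all pullbacks $i_x^*$, $x\in S$, using \Cref{conservativepts} and \Cref{f^* ordinary}. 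So your outline is the right shape, but the lemma you treat as formal is precisely where the work lies, and as stated your argument for it would fail.
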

As we know that Deligne $1$-motives with torsion embed in Deligne $1$-motives with torsion over the generic point, the full faithfulness amounts to a similar result for the intersection of smooth $1$-motives with the heart which is \Cref{embeddingthm}. This lemma relies on a computation on the motivic Picard functor $\omega^1$. The idea is that for an open immersion $j$, the first few cohomology groups of $\omega^1j_*j^*(M)$ for $M$ smooth and in the heart are not badly behaved and they do not see the pathological behavior related to torsion objects which appears only in higher degrees. This is consistent with the spirit of \cite{AM2}.
On the other hand, the essential surjectivity follows from our characterization of good reduction Deligne $1$-motives. The main theorem of this paper then follows from the above result.
\begin{mainthm}
(\Cref{ordinary is trop bien})  Let $S$ be a $\Q$-scheme or a Dedekind scheme and
let $\Lambda$ localization of $\Z$. Then the motivic t-structure induces a t-structure on $\DM^1(S,\Lambda)$. This t-structure is furthermore compatible with pullback and with the $\ell$-adic realization functors.
\end{mainthm}

\section*{Notations and conventions}

In this text we will freely use the language of $\infty$-categories of \cite{htt,ha,cisinski}.
When we refer to derived categories, we always refer to the $\infty$-categorical version. If $\ccal$ is a site and $\Lambda$ is a commutative ring, we let $\Sh(\ccal,\Lambda)$ be category of $\Lambda$-module objects in $\ccal$ and $\D(\ccal,\Lambda)$ be its derived category.
We adopt the cohomological convention for t-structures (\textit{i.e} the convention of \cite[1.3.1]{bbd}).
A stable $\infty$-category endowed with a t-structure is called a t-category. If $\mc{D}$ is a t-category, we denote by $\mc{D}^\heart=\mc{D}^{\geqslant 0} \cap \mc{D}^{\leqslant 0}$ the heart of the t-structure which is an abelian category. 

All schemes are assumed to be \textbf{noetherian} and \textbf{finite dimensional}; furthermore all smooth (and étale) morphisms and all quasi-finite morphisms are implicitly assumed to be separated and of finite type. 
We let $\Sm$ be the class of smooth morphisms of schemes. For a scheme $S$, we let $S_{\et}$ (resp. $S_\proet$, resp. $\Sm_S$, resp. $\mathrm{Sch}_S$) be the category of étale (resp. weakly étale, resp. smooth, resp. arbitrary) $S$-schemes.

Let $S$ be a scheme and let $\Lambda$ be a commutative ring. The stable $\infty$-category $\DM_{\et}(S,\Lambda)$ of \emph{étale motives} is the $\mb{P}^1$-stabilization of the category $\D_{\AAA}((\Sm_S)_{\et},\Lambda)$ of $\AAA$-local objects of $\D((\Sm_S)_\et,\Lambda)$. We have a chain of adjunctions that are monoidal and compatible with pullbacks.
\[\begin{tikzcd}
	{\D(S_{\et},\Lambda)} & {\D((\Sm_S)_{\et},\Lambda)} & {\D_{\mathbb{A}^1}((\Sm_S)_{\et},\Lambda)} & {\DM_{\et}(S,\Lambda)}
	\arrow["{\rho_\sharp}", shift left, from=1-1, to=1-2]
	\arrow["{\rho^\sharp}", shift left, from=1-2, to=1-1]
	\arrow["{L_{\mathbb{A}^1}}", shift left, from=1-2, to=1-3]
	\arrow["\iota", shift left, from=1-3, to=1-2]
	\arrow["{\Sigma^\infty}", shift left, from=1-3, to=1-4]
	\arrow["{\Omega^\infty}", shift left, from=1-4, to=1-3]
\end{tikzcd}\]
we denote by $(\rho_!,\rho^!)$ the composition of these adjunctions. 

If $\Lambda$ is a topological ring, we can see it as a sheaf on the proétale site of the point which for any scheme $S$ gives rise to a sheaf $\Lambda_S$. We let $\mathrm{Loc}_S(\Lambda)$ be the category of locally constant sheaves of $\Lambda$-modules over $S_{\proet}$ with finitely presented values. It is an abelian category when $\Lambda$ is regular (as a ring) and it is equivalent to the category of locally constant sheaves of $\Lambda$-modules over $S_{\et}$ with finitely presented values when $\Lambda$ is discrete (see \cite{hrs}).

Let $i\colon Z\rar X$ be a closed immersion and $j\colon U\rar X$ be the complementary open immersion. We call localization triangles the exact triangles of functors: 
\begin{equation}\label{localization}j_!j^*\rar Id \rar i_*i^*.
\end{equation}
\begin{equation}\label{colocalization}i_!i^!\rar Id \rar j_*j^*.
\end{equation}
in $\DM_{\et}(-,\Lambda)$.
\section{Deligne \texorpdfstring{$1$}{1}-motives with torsion}
In this section, we define the category of Deligne $1$-motives with torsion over a base scheme. This category contains Deligne's category of $1$-motives from \cite[Variante~10.1.10]{hodgeIII}. Our definition is slightly more restrictive than that of \cite[Definition~1.1.3]{jossen} and coincides with \cite[Definition~C.3.1]{bvk} over a field and \cite[Section~1.1]{BVRS} over a field of characteristic $0$. The category we define is \textit{a priori} an $\infty$-category but we will prove that it is in fact a $1$-category. We end the section by defining $1$-motives with coefficients in a flat $\Z$-algebra.
 
\begin{definition}\label{defdel1mot}
Let $S$ be a scheme. 
\begin{enumerate}
    \item An \emph{effective Deligne $1$-motive} over $S$ is a two-term complex $M=[\rho_\sharp(L)\to G]$ in $\rmc((\Sm_S)_{\et},\Z)$ placed in degrees $[0,1]$ with $L$ belonging to $\Loc_S(\Z)$ and $G$ a semi-abelian group scheme (meaning an extension of an abelian scheme by a torus) seen as an étale sheaf on $\Sm_S$. We denote by $\widetilde{\rmm}_1^\D(S)$ the category of effective Deligne $1$-motives.
    \item We will denote by $W_\mathrm{qiso}$ the class of quasi-isomorphisms $[\rho_\sharp(L)\to G]\to [\rho_\sharp(L')\to G']$ of complexes.
    \item The category $\rmm_1^\D(S)$ of \emph{Deligne $1$-motives with torsion} over $S$ is the localization $\widetilde{\rmm}_1^\D(S)$ at $W_\mathrm{qiso}$ (it exists thanks to \cite[Proposition 7.1.3.]{cisinski}).
\end{enumerate}
\end{definition}
A few observations are in order.
\begin{rem}\label{diagramchase}(Compare with \cite[Remarks~C.2.2]{bvk} and \cite[Proposition~1.1.8]{jossen}). By diagram chasing, a map of complexes \[u\colon [\rho_\sharp(L)\to G]\xrightarrow{(u_L,u_G)}[\rho_\sharp(L')\to G']\] is a quasi-isomorphism if and only if the natural maps 
    \[\ker(u_L)\to\ker(u_G) \hspace{1cm} \coker(u_L)\to\coker(u_G)\] are isomorphisms. As $\coker(u_G)$ is finite étale, hence representable by a group scheme which is also connected, it vanishes. 
Hence, we get a commutative diagram with exact rows:
\[\begin{tikzcd}
	0 & E & {\rho_\sharp(L)} & {\rho_\sharp(L')} & 0 \\
	0 & E & G & {G'} & 0
	\arrow[from=1-1, to=1-2]
	\arrow[from=1-2, to=1-3]
	\arrow[equals, from=1-2, to=2-2]
	\arrow[from=1-3, to=1-4]
	\arrow["{u_L}"', from=1-3, to=2-3]
	\arrow[from=1-4, to=1-5]
	\arrow["{u_G}"', from=1-4, to=2-4]
	\arrow[from=2-1, to=2-2]
	\arrow[from=2-2, to=2-3]
	\arrow[from=2-3, to=2-4]
	\arrow[from=2-4, to=2-5]
\end{tikzcd}\]
with $E$ representable by a finite étale group scheme.
\end{rem}
\begin{rem} As the change of sites induces a fully faithful functor $\rmc((\Sm_S)_{\et},\Z)\to \rmc((\Sch_S)_{\mathrm{fppf}},\Z)$, effective Deligne $1$-motives embed fully faithfuly in $\rmc((\Sch_S)_{\mathrm{fppf}},\Z)$. The argument of \Cref{diagramchase} then shows that a map of complexes \[u\colon [\rho_\sharp(L)\to G]\xrightarrow{(u_L,u_G)}[\rho_\sharp(L')\to G']\] becomes a quasi-isomorphism in $\rmc((\Sch_S)_{\mathrm{fppf}},\Z)$ if and only if we have a finite étale group scheme $E$ and a commutative diagram with exact rows as above but in the category $\Sh((\Sch_S)_{\mathrm{fppf}},\Z)$. Since $E$ is étale, the exact sequence of group schemes 
\[0\to E\to G \to G'\to 0\]
stays exact in $\Sh((\Sm_S)_{\et},\Z)$ (see the proof of \cite[Lemma~1.5.2]{bvk}). Hence, the map $u$ already was a quasi-isomorphism. As the converse is true, this shows that $\rmm_1^\D(S)$ can be obtained by seeing $\widetilde{\rmm}_1^\D(S)$ as a subcategory of $\rmc((\Sch_S)_{\mathrm{fppf}},\Z)$ and inverting quasi-isomorphisms there.
\end{rem}
\begin{rem}
The category we construct is possibly different from the essential image of $\widetilde{\rmm}_1^\D(S)$ in the derived category $\D((\Sm_S)_{\et},\Z)$; we do not know whether the obvious functor between those two constructions is an equivalence. 
\end{rem}
A priori $\rmm_1^\D(S)$ is an $\infty$-category but we will now prove that it is in fact a $1$-category. We will prove  that it is in fact a Gabriel-Zisman localization.
\begin{theorem}\label{ComputationHomM1D}
    The $\infty$-category $\rmm_1^\D(S)$ is (equivalent to the nerve of) a $1$-category. It is furthermore an additive category and for any $M$, $N$,
    \[\Hom_{\rmm_1^\D(S)}(M,N)=\colim_{M'\xrightarrow{\mathrm{qiso}} M} \Hom_{\widetilde{\rmm}_1^\D(S)}(M',N).\]
    Furthermore, this colimit is filtered.
\end{theorem}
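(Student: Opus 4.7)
The plan is to show that $W_{\mathrm{qiso}}$ admits a calculus of right fractions on $\widetilde{\rmm}_1^\D(S)$; by standard results on localizations of $\infty$-categories (see e.g.~\cite{cisinski}), this will yield simultaneously that $\rmm_1^\D(S)$ is (equivalent to the nerve of) a $1$-category, that its Hom sets are given by the claimed filtered colimit formula, and that the additive structure descends from $\widetilde{\rmm}_1^\D(S)$.

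The first step exploits \Cref{diagramchase}: a morphism $w\colon M \to M'$ is a qiso if and only if it fits into a short exact sequence $0 \to [E \xrightarrow{\sim} E] \to M \to M' \to 0$ of two-term complexes, with $E$ a finite étale group scheme embedded compatibly in both components. In particular, $w_L$ and $w_G$ are both surjective with common kernel $E$, on which the differential of $M$ restricts to an isomorphism. This ``standard form'' is what makes the class $W_{\mathrm{qiso}}$ tractable.

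The decisive step is the Ore condition: given $f\colon M_1 \to N$ and a qiso $s\colon N' \to N$ with kernel $[E \xrightarrow{\alpha} E]$, I would produce a qiso $t\colon M_2 \to M_1$ and $g\colon M_2 \to N'$ with $sg = ft$. The natural candidate for $M_2$ is the fibered product $M_1 \times_N N'$ formed in the ambient category of two-term complexes of sheaves. A direct computation shows that the projection $t\colon M_2 \to M_1$ has kernel precisely $[E \xrightarrow{\alpha} E]$ (the same subcomplex as the kernel of $s$, now sitting in $M_2$), so $t$ is a qiso \emph{provided} $M_2$ lies in $\widetilde{\rmm}_1^\D(S)$. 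The $L$-component $L_{M_1}\times_{L_N} L_{N'}$ is locally constant, as extensions of locally constant sheaves by locally constant sheaves are locally constant. The $G$-component $G_{M_1}\times_{G_N} G_{N'}$ is an extension of the semi-abelian $G_{M_1}$ by the finite étale $E$ and may fail to be semi-abelian by being disconnected. I would address this by replacing the $G$-component by its identity component, which \emph{is} semi-abelian (it is isogenous to $G_{M_1}$ with finite étale kernel a subgroup $E'\subset E$), and by correspondingly restricting the $L$-component along the differential so that the kernel subcomplex remains of the acyclic form $[E' \xrightarrow{\sim} E']$. An alternative is to reduce first to qisos $s$ which factor through the multiplication-by-$n$ isogeny on the semi-abelian part for some integer $n$ annihilating $E$; for such $s$ the pullback stays connected and this subtlety vanishes.

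The remaining cancellation condition (if $fs = gs$ for some qiso $s$, then $ft = gt$ for some qiso $t$) is handled by observing that $f-g$ factors through the acyclic kernel $[E \xrightarrow{\sim} E]$ of $s$ and can then be killed on the target by a suitable qiso constructed analogously. The main obstacle is the semi-abelian/connectedness issue in the Ore step, which requires carefully matching the $L$- and $G$-components after passing to the identity component. Once it is settled, additivity of $\rmm_1^\D(S)$ is automatic: the zero object $[0\to 0]$ remains a zero object, direct sums of qisos are qisos (the kernels $[E \xrightarrow{\sim} E]$ add as $[E\oplus E'\xrightarrow{\sim} E\oplus E']$), and the calculus of fractions formalism transfers the additive structure to the localization.
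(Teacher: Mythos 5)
Your overall strategy is the same as the paper's: show that $W_{\mathrm{qiso}}$ admits a right calculus of fractions and invoke Cisinski's localization machinery (the paper's \Cref{GZ}), with quasi-isomorphisms described as extensions by an acyclic finite étale kernel (\Cref{diagramchase}), the Ore condition checked on a fibre product, and additivity read off from the resulting Hom formula. The genuine gap is at the decisive step. You assert that the fibrewise identity component $H^{0}$ of $H:=G_{M_1}\times_{G_N}G_{N'}$ is semi-abelian \emph{because} it is isogenous to $G_{M_1}$ with finite étale kernel. Over a field this is classical, but over an arbitrary noetherian base it is precisely the statement that the paper's \Cref{composition} spends its whole proof establishing: one must show that the kernel of the induced map on toric parts is of multiplicative type (via \cite[Exposé~X, Théorème~7.1]{sga3}) and hence finite étale, produce the subtorus, and prove that the quotient by it is an algebraic space which is finite étale over the abelian part, hence representable, smooth, proper with connected fibres, i.e.\ an abelian scheme; one must also verify that $E'=E\cap H^{0}$ is finite étale and that $H^{0}\to G_{M_1}$ is finite and surjective. ``Isogenous to a semi-abelian scheme, hence semi-abelian'' is the claim to be proved, not an argument. (Your connectedness concern is legitimate --- the paper works with the full fibre product and passes to the identity component only of its toric part --- but identifying the subtlety is not the same as resolving it.)

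Two further points. Your fallback reduction to quasi-isomorphisms factoring through multiplication by $n$, with $n$ killing $E$, fails in the integral setting of this theorem: $E$ is merely finite étale, so its order may be divisible by a residue characteristic $p$ of $S$ (a constant $\Z/p\Z$ inside an ordinary abelian scheme in characteristic $p$, say); then $[n]$ is not étale, its kernel is not a local system, and the pulled-back complex is no longer an effective Deligne $1$-motive, so the comparison map is not in $W_{\mathrm{qiso}}$. The paper uses this trick only after inverting the residue characteristics (\Cref{HomcoeffZ'}). Finally, your cancellation axiom is stated in the wrong direction (as written it is vacuous: take $t=s$): what a right calculus requires is that $tf=tg$ for a quasi-isomorphism $t$ out of the target forces $fs=gs$ for some quasi-isomorphism $s$ into the source, so $f-g$ must be killed by precomposition on the source, not ``on the target''. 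Factoring $f-g$ through the acyclic kernel is the right start, but the construction of $s$ still has to be carried out; the paper defers exactly this to \cite[Proposition~C.2.3]{bvk} (\Cref{simplifiable}).
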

\begin{proof}
    As isomorphisms are quasi-isomorphisms, \Cref{composition,simplifiable} ensure that $W_\mathrm{qiso}$ satisfies the axioms of Gabriel-Zisman (see \Cref{GZ} below). As quasi-isomorphisms are also closed under composition, \Cref{GZ} ensures that $\rmm_1^\D(S)$ is a $1$-category and that the $\Hom$-sets are of the desired form. They are therefore abelian groups. Furthermore, the computation of the $\Hom$-sets also ensures that finite sums exist and coincide with finite products. 
\end{proof}
The following lemma is essentially contained in \cite[Section~7.2]{cisinski}.
\begin{lemma}\label{GZ}
    Let $\mc{C}$ be a small $1$-category and $W$ be a class of morphisms of $\mc{C}$ that satisfies the axioms of Gabriel-Zisman:
    \begin{enumerate}
        \item Isomorphisms lie in $W$.
        \item Starting from a diagram in $\ccal$ with solid arrows as below
\[\begin{tikzcd}
	& {\widetilde{M}} \\
	M && {\widetilde{N},} \\
	& N
	\arrow["W", dotted, from=1-2, to=2-1]
	\arrow[dotted, from=1-2, to=2-3]
	\arrow[from=2-1, to=3-2]
	\arrow["W", from=2-3, to=3-2]
\end{tikzcd}\]
there is an object $\widetilde{M}$ and dotted arrow filling the diagram and making it commutative.
\item For any pair of morphisms $f,g\colon M\to N$ in $\ccal$, if $t\colon N\to N'$ is a
map in $W$ such that $tf=tg$, there exists a map $s\colon M'\to M$ in $W$ such that $fs=gs$.
    \end{enumerate}
Assume furthermore that $W$ is closed under composition. Then, $\ccal[W^{-1}]$ is a $1$-category and coincides with the classical Gabriel-Zisman localization of $\ccal$. Furthermore, for any $M$, $N$, \[\Hom_{\ccal[W^{-1}]}(M,N)=\colim_{M'\xrightarrow{\mathrm{qiso}} M} \Hom_{\ccal}(M',N).\]
\end{lemma}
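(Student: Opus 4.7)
The plan is to build an explicit $1$-category $\ccal'$ whose hom-sets match the claimed colimit formula, verify that it receives a functor from $\ccal$ inverting $W$ with the correct $1$-categorical universal property, and then invoke the $\infty$-categorical comparison theorem of \cite[Section~7.2]{cisinski} to identify its nerve with $\ccal[W^{-1}]$.

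First I would check that for each fixed $M$ the indexing diagram of $W$-morphisms $M'\xrightarrow{W} M$ is filtered. Non-emptiness comes from $\id_M$; given two such maps $s_1\colon M_1\to M$ and $s_2\colon M_2\to M$, axiom (2) applied to the span $M_1\xrightarrow{s_1} M\xleftarrow{s_2} M_2$ produces an object $\widetilde M$ with arrows $\widetilde M\xrightarrow{W} M_1$ and $\widetilde M\to M_2$, and closure of $W$ under composition then makes $\widetilde M\to M$ lie in $W$. For two parallel maps $u,v\colon M_1\rightrightarrows M_2$ above $M$, axiom (3) applied with $t=s_2$ (so that $tu=s_1=tv$) yields a $W$-map $M_1'\xrightarrow{W} M_1$ coequalizing them, and composition with $s_1$ brings it over $M$. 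This filteredness is also what upgrades the pointwise abelian-group structure on each $\Hom_{\widetilde{\rmm}_1^\D(S)}(M',N)$ to an abelian-group structure on the colimit.

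Next, define $\ccal'$ to have the objects of $\ccal$ and hom-sets given by the colimit formula. A morphism is represented by a roof $M\xleftarrow{s,W} M'\xrightarrow{f} N$. Composition of $(s,f)\colon M\to N$ with $(t,g)\colon N\to P$ is defined by applying axiom (2) to the span $M'\xrightarrow{f} N\xleftarrow{t,W} N'$ to produce $M''\xrightarrow{W} M'$ and a map $M''\to N'$, then setting the composite to be the roof $M\xleftarrow{W} M''\to P$. Axiom (3) ensures that the result is independent, after passing to the filtered colimit, of the choice of $M''$; a standard (if somewhat lengthy) diagram chase then gives associativity, unitality, and bilinearity of composition. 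The tautological functor $\ccal\to\ccal'$ sends any $w\in W$ to an isomorphism, with inverse the roof $N\xleftarrow{w} M\xrightarrow{\id} M$, and any functor $F\colon\ccal\to \dcal$ from $\ccal$ to a $1$-category sending $W$ to isomorphisms extends uniquely to $\ccal'$ via $F(s,f)=F(f)\circ F(s)^{-1}$.

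Finally, to upgrade this to the $\infty$-categorical universal property and conclude $\ccal[W^{-1}]\simeq N(\ccal')$, I would invoke \cite[Section~7.2]{cisinski}: under a calculus of fractions, the $\infty$-categorical localization of (the nerve of) a $1$-category agrees with its classical Gabriel--Zisman localization, so in particular it remains a $1$-category. The main obstacle is precisely this last step, since \emph{a priori} higher simplices in $\ccal[W^{-1}]$ could carry nontrivial homotopy; everything before that is an elementary Gabriel--Zisman verification. The filtered-colimit description of homs then immediately yields that $\rmm_1^\D(S)$ is additive, finishing the proof.
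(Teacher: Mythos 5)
Your proposal is correct and, at the decisive point, uses the same input as the paper: the calculus-of-fractions machinery of \cite[Section~7.2]{cisinski}. The difference is one of economy rather than substance. You first rebuild the classical roof category $\ccal'$ by hand (filteredness of $W_{/M}$ from axiom (2) plus closure under composition, composition of roofs via axiom (2), well-definedness via axiom (3), the $1$-categorical universal property), and only then cite Cisinski to identify $\ccal[W^{-1}]$ with $N(\ccal')$. The paper skips the explicit roof construction entirely: it verifies directly that $W_{/M}\to\ccal$ is a right calculus of fractions at each $M$ in the sense of \cite[Definition~7.2.6]{cisinski}, which by \cite[7.2.7]{cisinski} amounts to showing that $N\mapsto \colim_{M'\xrightarrow{W}M}\Map_\ccal(M',N)$ sends $W$-maps to equivalences --- filteredness comes from axiom (2) and closure under composition, injectivity of the comparison map from axiom (3), surjectivity from axiom (2) --- and then \cite[Theorem~7.2.8]{cisinski} gives discreteness of the mapping spaces together with the colimit formula, while \cite[Corollary~7.2.12]{cisinski} identifies the result with the classical Gabriel--Zisman localization. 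Your well-definedness checks for roof composition encode exactly these injectivity/surjectivity arguments, so there is no gap; just be aware that the comparison theorem you invoke takes Cisinski's fraction condition (not the classical axioms verbatim) as hypothesis, so the verification you distribute across the roof calculus is precisely the step that must be made explicit, and once it is, your explicit construction of $\ccal'$ and its universal property become redundant.
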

\begin{proof}
    Fix an object $M$ of $\ccal$. First note that $W_{\mathrm{qiso}/M}\to \ccal$ is a \emph{putative right calculus of fractions} at $M$ in the sense of \cite[Definition~7.2.2]{cisinski}. We claim that the three axioms above imply that it is a \emph{right calculus of fractions} at $M$ in the sense of \cite[Definition~7.2.6]{cisinski}. Using \cite[Section~7.2.7]{cisinski}, this amounts to showing that the functor 
    \[N\mapsto \colim_{M'\xrightarrow{W}M}\Map_\ccal(M',N)\] sends arrows in $W$ to equivalences. Axiom (2) and the fact that $W$ is closed under composition ensure that this colimit is filtered so that, as the mapping spaces of interest are in fact $\Hom$-sets, this colimit is still a set. Let $N'\to N$ be a map in $W$. Axiom (3) ensures that 
    \[\colim_{M'\xrightarrow{W}M}\Hom_\ccal(M',N')\to \colim_{M'\xrightarrow{W}M}\Hom_\ccal(M',N)\]
    is injective while axiom (2) ensures surjectivity. 

Hence \cite[Theorem~7.2.8]{cisinski} implies that \[\Map_{\ccal[W^{-1}]}(M,N)=\colim_{M'\xrightarrow{\mathrm{qiso}} M} \Hom_{\ccal}(M',N)\] so that $\ccal[W^{-1}]$ is indeed a $1$-category with its hom sets computed in the desired way. It is the classical Gabriel-Zisman localization of $\ccal$ by \cite[Corollary~7.2.12]{cisinski}.
\end{proof}
\begin{lemma}\label{composition}
Let $S$ be a scheme. Then, the category $\rmm_1^\D(S)$ and the class $W_\mathrm{qiso}$ satisfy the second axiom of \Cref{GZ}.
\end{lemma}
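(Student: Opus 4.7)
The plan is to construct $\widetilde{M}$ as a correction of the termwise pullback of complexes. First, I would apply \Cref{diagramchase} to the quasi-isomorphism $w\colon\widetilde{N}\to N$: it fits into a short exact sequence $0\to E\to \widetilde{N}\to N\to 0$ of complexes in $\rmc((\Sm_S)_{\et},\Z)$, where $E$ is a finite étale group scheme which is the common kernel of $w_L$ and $w_G$ and on which the differential $d_{\widetilde{N}}$ acts as the identity.

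Next, form the termwise pullback $\widetilde{M}'$ of complexes of étale sheaves, with $L_{\widetilde{M}'}:=L_M\times_{L_N}L_{\widetilde{N}}$ and $G_{\widetilde{M}'}:=G_M\times_{G_N}G_{\widetilde{N}}$. Both projections to $M$ are surjective with common kernel $E$, and $d_{\widetilde{M}'}|_E=\mathrm{id}_E$. While $L_{\widetilde{M}'}$ is locally constant with finitely presented values, the group scheme $G_{\widetilde{M}'}$ is only a smooth extension $0\to E\to G_{\widetilde{M}'}\to G_M\to 0$, which may be disconnected and thus fail to be semi-abelian. To correct this, let $G_{\widetilde{M}}:=G_{\widetilde{M}'}^{0}$ be the identity component, an open subgroup scheme. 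Since $G_M$ is connected, $G_{\widetilde{M}}\to G_M$ is a finite étale isogeny with kernel $E':=E\cap G_{\widetilde{M}}\subset E$, which makes $G_{\widetilde{M}}$ semi-abelian; the quotient $\pi_0(G_{\widetilde{M}'})$ is a finite étale subquotient of $E$. Let $L_{\widetilde{M}}\subset L_{\widetilde{M}'}$ be the preimage of $G_{\widetilde{M}}$ under $d_{\widetilde{M}'}$; it is a finite-index subsheaf, hence locally constant with finitely presented values. Set $\widetilde{M}:=[L_{\widetilde{M}}\to G_{\widetilde{M}}]$, and take $\widetilde{M}\to M$ and $\widetilde{M}\to \widetilde{N}$ to be the restrictions of the two projections from $\widetilde{M}'$.

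The commutativity of the outer square is automatic from the termwise pullback construction. To verify that $\widetilde{M}\to M$ is a quasi-isomorphism, I would invoke \Cref{diagramchase} once more: the identity $d|_E=\mathrm{id}_E$ forces the kernels on the $L$- and $G$-parts to both equal $E'$ and to be matched; the cokernel vanishes on the $G$-part by construction, and on the $L$-part after using the surjectivity of $E\twoheadrightarrow\pi_0(G_{\widetilde{M}'})$ (which holds because $G_M$ is connected, so every component of $G_{\widetilde{M}'}$ meets the fiber $E$ over the identity) to adjust lifts of elements of $L_M$ into $L_{\widetilde{M}}$. The main subtlety will be verifying that the identity component $G_{\widetilde{M}}$ is indeed semi-abelian; this reduces fiberwise over geometric points of $S$ to the classical fact that a connected étale isogeny cover of a semi-abelian variety is again semi-abelian, which in turn follows by pulling back the torus and abelian parts of $G_M$ along the isogeny.
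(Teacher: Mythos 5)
Your construction---termwise fiber product of complexes, then passage to the fiberwise identity component on the group part and to the corresponding preimage on the lattice part---is essentially the construction used in the proof of \Cref{composition} in the text, and your diagram-chase verifications of commutativity and of the quasi-isomorphism property are fine. The genuine gap is exactly at the point you yourself flag as the main subtlety: you verify that $G_{\widetilde{M}}$ is semi-abelian by reducing to geometric fibers. In \Cref{defdel1mot}, ``semi-abelian'' means an extension of an abelian scheme by a torus \emph{over $S$}; this is a global structure and cannot be detected fiberwise. Knowing that every geometric fiber of $G_{\widetilde{M}}$ is a semi-abelian variety (which is all that the classical fact about connected étale isogeny covers of semi-abelian varieties gives you) does not produce an exact sequence $0\to T\to G_{\widetilde{M}}\to A\to 0$ of group schemes over $S$: for instance, the identity component of the Néron model of an abelian variety with bad semistable reduction has semi-abelian geometric fibers but is not an extension of an abelian scheme by a torus, since its toric rank jumps. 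In your situation the toric rank is constant and the conclusion is true, but proving it is precisely where the work lies, and it has to be done over the base; this over-the-base verification is the actual content of the proof in the text.

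Concretely, the missing argument runs as follows: form the fiber product of the torus parts $T_M\times_{T_N}T_{\widetilde{N}}$ inside $G_{\widetilde{M}'}$, observe that it is of multiplicative type (it is a kernel of a homomorphism of tori; over a connected base one can use \cite[Exposé~X, Théorème~7.1]{sga3}), so that its identity component is a torus mapping to $T_M$ with finite étale kernel; then show that the quotient of $G_{\widetilde{M}}$ by this torus is finite étale over the abelian part $A_M$ (snake lemma on the kernels), connected, smooth and proper over $S$, hence an abelian scheme. Note also that two of your intermediate assertions quietly depend on the same over-the-base analysis rather than on fiberwise or ``finite-index'' reasoning: that $G_{\widetilde{M}}\to G_M$ is \emph{finite} étale requires $G_{\widetilde{M}}=G_{\widetilde{M}'}^0$ to be closed in $G_{\widetilde{M}'}$, i.e.\ the component sheaf $G_{\widetilde{M}'}/G_{\widetilde{M}'}^0$ to be finite étale over $S$, and the claim that $L_{\widetilde{M}}$, as a finite-index subsheaf of the local system $L_{\widetilde{M}'}$, is again locally constant is not a valid general principle (a subsheaf of a local system need not be locally constant) and should instead be deduced from that same finiteness of the component sheaf. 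None of this changes the shape of your construction, which is the right one, but the fiberwise reduction cannot replace the argument over $S$.
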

\begin{proof}
We can assume that $S$ is connected. 
    Write $N=[\rho_\sharp(L)\to G]$ and $\widetilde{N}=[\rho_\sharp(\widetilde{L})\to \widetilde{G}]$. \Cref{diagramchase} above shows that $\widetilde{L}=(L\times_G \widetilde{G})$. Hence, it suffices to show that if we have a diagram with solid arrow as below with $\Gamma$ semi-abelian
\[\begin{tikzcd}
	& {\widetilde{\Gamma}} \\
	\Gamma && {\widetilde{G}} \\
	& G
	\arrow["\psi", swap, dotted, from=1-2, to=2-1]
	\arrow[dotted, from=1-2, to=2-3]
	\arrow[from=2-1, to=3-2]
	\arrow["\varphi",from=2-3, to=3-2]
\end{tikzcd}\]
there is a semi-abelian group scheme $\widetilde{\Gamma}$ and dotted arrows filling the diagram such that the kernel of $\psi$ a finite étale group scheme. We claim that $\widetilde{\Gamma}:=\Gamma\times_G \widetilde{G}$ is semi-abelian which would finish the proof.

First write 
\[\begin{tikzcd}
    0\ar[r] & \widetilde{T}\ar[r] \ar["\varphi_T",d]& \widetilde{G}\ar[r] \ar["\varphi",d] & \widetilde{A}\ar[r] \ar["\varphi_A",d]& 0 \\
    0\ar[r] & T\ar[r] & G\ar[r] & A \ar[r]& 0
\end{tikzcd}\]
with $T$ and $\widetilde{T}$ tori and $A$ and $\widetilde{A}$ abelian schemes. 
Since $\coker(\varphi_T)$ is connected and finite by the snake lemma, it vanishes. 

By \cite[Exposé~X, Théorème~7.1]{sga3}, as $S$ is connected and noetherian, given a geometric point $\xi$ of $S$, the functor 
\[H\mapsto X(H)_\xi=\Hom_\xi(H_\xi,\mathbb{G}_{m,\xi})\] is an anti-equivalence of categories between groups of multiplicative type over $S$ and $\pi_1^{\mathrm{SGA}3}(S,\xi)$-modules. Hence $\ker(\varphi_T)$ is of multiplicative type hence flat over $S$. As a closed subscheme of $\ker(\varphi)$, it is also finite and unramified over $S$. Hence it is finite étale.

Write now 
\[0\to T'\to \Gamma \to A'\to 0\]
with $T'$ a torus and $A'$ an abelian scheme. Let $T''=T'\times_T \widetilde{T}$, it is obviously of multiplicative type with a discrete Cartier dual $X(H)$ so that its connected component $(T'')^0$ is a torus and the natural map $(T'')^0\to T'$ has finite étale kernel. This yields a commutative diagram with exact rows
\[\begin{tikzcd}
    0\ar[r] & (T'')^0\ar[r] \ar["\varphi_{T'}",d]& \widetilde{\Gamma}\ar[r] \ar["\varphi_\Gamma",d] & \widetilde{\Gamma}/(T'')^0\ar[r] \ar["\varphi_{A'}",d]& 0 \\
    0\ar[r] & T'\ar[r] & \Gamma\ar[r] & A' \ar[r]& 0
\end{tikzcd}\]
where $\ker(\varphi_\Gamma)$ and $\ker(\varphi_{T'})$ are both finite étale, so that by the snake lemma, so is $\ker(\varphi_{A'})$, noting that $\coker(\varphi_{T'})$ vanishes as before. Hence, the algebraic space $\widetilde{\Gamma}/(T'')^0$ is finite étale over $A'$ and therefore representable by a scheme $A''$ which is connected, as $\widetilde{\Gamma}$ is, and proper smooth over $S$ because $A'$ is, and is therefore an abelian scheme. Hence $\widetilde{\Gamma}$ is semi-abelian as needed.
\end{proof}
\begin{lemma}\label{simplifiable}
    The maps in $W_\mathrm{qiso}$ are simplifiable on the left and on the right.
\end{lemma}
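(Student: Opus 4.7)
The plan is to show directly that every $t\in W_\mathrm{qiso}$ is both a monomorphism and an epimorphism in $\widetilde{\rmm}_1^\D(S)$; this yields axiom (3) of \Cref{GZ} trivially by taking $s$ to be the identity. Both arguments rely on the explicit description of quasi-isomorphisms provided by \Cref{diagramchase}.

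For right-cancellability, let $s=(s_L,s_G)\colon M'\to M$ be a quasi-isomorphism. By \Cref{diagramchase}, both $s_L\colon \rho_\sharp L'\to \rho_\sharp L$ and $s_G\colon G'\to G$ sit in short exact sequences with finite étale kernel $E$, so in particular they are epimorphisms in $\Sh((\Sm_S)_{\et},\Z)$. If $f,g\colon M\to N$ satisfy $f\circ s=g\circ s$, writing $h=f-g=(h_L,h_G)$, the vanishing of $h_L\circ s_L$ and of $h_G\circ s_G$ forces $h_L=h_G=0$, hence $h=0$.

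For left-cancellability, let $t\colon N\to N'$ be a quasi-isomorphism with $N=[\rho_\sharp L\to G]$. The kernel of $t$ in the abelian category of two-term complexes of étale sheaves is $[E\xrightarrow{\mathrm{id}} E]$: indeed \Cref{diagramchase} yields $\ker(t_L)=\ker(t_G)=E$, and the composition $E\hookrightarrow \rho_\sharp L\xrightarrow{d_N} G$ identifies with the embedding $E\hookrightarrow G$ from that diagram. If $f,g\colon M\to N$ satisfy $t\circ f=t\circ g$, then $h=f-g$ factors through $\ker(t)$; writing $M=[\rho_\sharp L_M\to G_M]$, this factorization amounts to a pair $(h_0,h_1)$ with $h_1\colon G_M\to E$ and $h_0=h_1\circ d_M$. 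Since $G_M$ is semi-abelian, it has connected geometric fibers, so the group scheme homomorphism $h_1\colon G_M\to E$ into the finite étale group scheme $E$ must vanish. Hence $h_0=0$ and $f=g$.

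I expect the main obstacle to be the left-cancellability argument, where one must identify $\ker(t)$ correctly via \Cref{diagramchase} (noting that this kernel is not itself in $\widetilde{\rmm}_1^\D(S)$, so the factorization lives in the ambient complex category) and then invoke the fact that a semi-abelian group scheme admits no non-trivial homomorphism to a finite étale group scheme.
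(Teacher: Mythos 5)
Your proof is correct and follows essentially the same route as the paper, which simply invokes \cite[Proposition~C.2.3]{bvk}: there, as in your argument, one shows via the description of quasi-isomorphisms (componentwise surjectivity with common finite étale kernel) that they are epimorphisms, and that left cancellability follows because a homomorphism from a semi-abelian scheme (connected fibres) to a finite étale group scheme vanishes. Your spelled-out factorization through the degreewise kernel complex is exactly the intended argument, so nothing further is needed.
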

\begin{proof}
    The proof is exactly the same as \cite[Proposition~C.2.3]{bvk}.
\end{proof}



We can also recover the usual category of Deligne $1$-motives.
\begin{proposition}\label{usualDel}
Let $M=[\rho_\sharp(L)\to G]$ and $N=[\rho_\sharp(L')\to G']$ be in $\widetilde{\rmm}_1^\D(S)$ and assume that $L'$ is torsion-free. Then the map 
\[\Hom_{\widetilde{\rmm}_1^\D(S)}(M,N)\to \Hom_{\rmm_1^\D(S)}(M,N)\]
is an equivalence.
\end{proposition}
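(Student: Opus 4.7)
The plan is to apply \Cref{ComputationHomM1D}, which presents
\[\Hom_{\rmm_1^\D(S)}(M,N) \;=\; \colim_{u\colon \widetilde M \to M \,\in\, W_{\mathrm{qiso}}}\Hom_{\widetilde{\rmm}_1^\D(S)}(\widetilde M, N)\]
as a filtered colimit, and to verify that the canonical map from the term at $u = \mathrm{id}_M$ is bijective. I handle injectivity and surjectivity separately.

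For injectivity, if two morphisms $f_1, f_2 \in \Hom_{\widetilde{\rmm}_1^\D(S)}(M,N)$ agree in the colimit, then by filteredness there is a quasi-isomorphism $u\colon \widetilde M \to M$ with $f_1 \circ u = f_2 \circ u$. By \Cref{diagramchase} both components $u_L$ and $u_G$ are surjective (their cokernels being finite étale subsheaves of semi-abelian schemes, hence zero), so $f_1 = f_2$ componentwise.

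For surjectivity, a class in the colimit is represented by a roof $(u\colon \widetilde M \to M,\ g\colon \widetilde M \to N)$ with $u$ a quasi-isomorphism, and it suffices to factor $g$ as $f \circ u$ for some $f\colon M \to N$ in $\widetilde{\rmm}_1^\D(S)$. Writing $\widetilde M = [\rho_\sharp(\widetilde L) \xrightarrow{\tilde d} \widetilde G]$ and $M = [\rho_\sharp(L) \xrightarrow{d} G]$, \Cref{diagramchase} identifies the common kernel $E$ of $u_L$ and $u_G$ as a finite étale group scheme sitting identically in both $\rho_\sharp(\widetilde L)$ and $\widetilde G$, with $\tilde d$ restricting to this identification. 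To obtain the factorisation it is enough to show that $g_L$ and $g_G$ both vanish on $E$. The hypothesis enters precisely at the component $g_L|_E\colon E \to \rho_\sharp(L')$: since $E$ corresponds to a locally constant sheaf of \emph{finite} abelian groups on $S_\et$ while $L'$ is torsion-free, there are no nonzero maps. The chain-map relation $g_G \circ \tilde d = d' \circ g_L$ then forces $g_G|_E = d' \circ g_L|_E = 0$. Hence $g_L$ and $g_G$ descend uniquely to morphisms $f_L\colon \rho_\sharp(L) \to \rho_\sharp(L')$ and $f_G\colon G \to G'$, and the chain-map condition for $(f_L, f_G)$ follows from that for $(g_L, g_G)$ by surjectivity of $u_L$.

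The main conceptual point is to pinpoint where the torsion-freeness of $L'$ is used. Once \Cref{diagramchase} exhibits a quasi-isomorphism concretely as killing a common finite étale subgroup scheme $E$, the hypothesis kills $g_L|_E$ immediately, and compatibility with differentials transports this vanishing to $g_G|_E$ for free; the rest is formal.
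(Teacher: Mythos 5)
Your proof is correct and follows essentially the same route as the paper: the key step in both is that, for a roof over a quasi-isomorphism, the common finite étale kernel $E$ identified in \Cref{diagramchase} maps to zero in $\rho_\sharp(L')$ by torsion-freeness, with vanishing on the $G'$-component then forced by compatibility with the differentials, so the map descends. The paper leaves the injectivity half implicit (it follows, as you note, from surjectivity of the components of quasi-isomorphisms), so your write-up is simply a more explicit version of the same argument.
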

\begin{proof}
     Any diagram with solid arrows as below
\[\begin{tikzcd}
    \widetilde{M} \ar[r,"f"] \ar["\mathrm{qiso}",d, swap]& N \\
    M \ar[ru,dotted]
\end{tikzcd}\]
can be completed with a dotted arrow that makes the diagram commutative. Indeed by \Cref{diagramchase}, letting $\widetilde{M}=[\rho_\sharp(\widetilde{L})\to \widetilde{G}]$, the kernels of the maps $\widetilde{L}\to L$ and $\widetilde{G}\to G$ coincide and are therefore sent to $0$ by $f$.
\end{proof}

\begin{rem}\Cref{usualDel} shows that we recover the usual category of Deligne $1$-motives as a full subcategory of $\rmm_1^\D(S)$.
\end{rem}
\begin{rem}
    Our category coincides with the one that was considered in \cite{BVRS} in the case where $S$ is the spectrum of a field $k$ of characteristic $0$. In particular, we see that $\rmm_1^\D(k)$ is an abelian category by \cite[Proposition 1.3]{BVRS}. 
\end{rem}

In general, we will have to invert the residue characteristic exponents of $S$ for this to stay true. In \cite[Theorem~C.5.3]{bvk}, they indeed show that over a field inverting $p$ yields an abelian category.

\begin{definition}
Let $S$ be a scheme and let $\Lambda$ be a flat $\Z$-algebra.
The category $\rmm_1^\D(S,\Lambda)$ of \emph{Deligne $1$-motives with coefficients $\Lambda$} over $S$ is the localization of the category $\widetilde{\rmm}_1^\D(S,\Lambda)$ of complexes of the form $[\rho_\sharp(L)\to M]\otimes_\Z \Lambda$ at the smallest class $W_\Lambda$ closed under composition and containing $(W_\mathrm{qiso}\otimes_\Z \Lambda)$ and the class $ W_{\mathrm{iso},\Lambda}$ of isomorphisms between objects of $\widetilde{\rmm}_1^\D(S,\Lambda)$.
\end{definition}
\begin{proposition}\label{coeffLambda}
    Deligne $1$-motives with torsion with coefficients $\Lambda$ form a $1$-category which is the Gabriel-Zisman localization of $\widetilde{\rmm}_1^\D(S,\Lambda)$ at $W_\Lambda$. 
    Furthermore, if $M$, $N$ belong to $\widetilde{\rmm}_1^\D(S)$, the canonical map 
\[\Hom_{\rmm_1^\D(S)}(M,N)\otimes_\Z \Lambda \to \Hom_{\rmm_1^\D(S,\Lambda)}(M\otimes_\Z\Lambda,N\otimes_\Z \Lambda)\]
is an equivalence.
\end{proposition}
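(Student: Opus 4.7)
The plan is to apply Lemma~\ref{GZ} to the class $W_\Lambda\subseteq\widetilde{\rmm}_1^\D(S,\Lambda)$, following the blueprint of Theorem~\ref{ComputationHomM1D}. Axiom~(1) of Lemma~\ref{GZ} and closure under composition are built into the definition of $W_\Lambda$. For axioms~(2) and~(3), I would argue by induction on the length of a composition expressing a $W_\Lambda$-map, reducing to the case where the given $W_\Lambda$-map is either an isomorphism (trivial) or of the form $f\otimes\mathrm{id}_\Lambda$ with $f\in W_\mathrm{qiso}$. In the latter case, the pullback and quotient constructions in the proofs of Lemmas~\ref{composition} and~\ref{simplifiable} go through after applying $-\otimes_\Z\Lambda$, because the relevant finite étale kernels, tori and abelian schemes are preserved by tensoring with the flat $\Z$-algebra $\Lambda$. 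Thus $\rmm_1^\D(S,\Lambda)$ is a $1$-category with hom-sets given by the expected filtered colimit formula.

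For the Hom equivalence, two ingredients are needed. First, at the pre-localized level, one checks
\[\Hom_{\widetilde{\rmm}_1^\D(S,\Lambda)}(M\otimes\Lambda,N\otimes\Lambda)=\Hom_{\widetilde{\rmm}_1^\D(S)}(M,N)\otimes_\Z\Lambda.\]
This reduces to the analogous statement for morphisms between locally constant sheaves with finitely presented stalks and between semi-abelian schemes, and follows from the flatness of $\Lambda$ together with the finite generation of these hom groups as $\Z$-modules. Second, one shows that the subcategory of maps of the form $f\otimes\mathrm{id}_\Lambda$ with $f\colon M'\to M$ in $W_\mathrm{qiso}$ is cofinal in the filtered undercategory of $W_\Lambda$-maps targeting $M\otimes\Lambda$. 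I would argue this by induction on composition length: given $u=u_2\circ u_1$ in $W_\Lambda$, the inductive hypothesis applied to $u_2$ yields a $W_\mathrm{qiso}$-lift, which can then be pushed along $u_1$ by applying axiom~(2) to the resulting square and invoking the inductive hypothesis once more; connectedness of the fibers of the cofinality functor is handled similarly using axiom~(3).

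Combining these ingredients with the fact that filtered colimits commute with $-\otimes_\Z\Lambda$ yields
\[\Hom_{\rmm_1^\D(S,\Lambda)}(M\otimes\Lambda,N\otimes\Lambda)=\colim_{M'\xrightarrow{\mathrm{qiso}}M}\Hom_{\widetilde{\rmm}_1^\D(S)}(M',N)\otimes_\Z\Lambda=\Hom_{\rmm_1^\D(S)}(M,N)\otimes_\Z\Lambda.\]
The main obstacle will be the cofinality step, which requires careful bookkeeping with the composition structure of $W_\Lambda$; by contrast, the axiom verification and the pre-localized Hom computation are routine reductions to the $\Z$-coefficient case and to standard flatness considerations, respectively.
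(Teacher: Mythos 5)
Your overall plan mirrors the paper's proof: check the Gabriel--Zisman axioms for $W_\Lambda$ using \Cref{composition,simplifiable} and closure under composition, apply \Cref{GZ}, and then identify the resulting filtered colimit with $\Hom_{\rmm_1^\D(S)}(M,N)\otimes_\Z\Lambda$. You are right that this last identification needs two extra ingredients (comparison of effective Hom groups after $\otimes_\Z\Lambda$, and cofinality of the tensored quasi-isomorphisms), but the justifications you give for exactly these nontrivial points would not go through as written. The identity $\Hom_{\widetilde{\rmm}_1^\D(S,\Lambda)}(M\otimes\Lambda,N\otimes\Lambda)\cong\Hom_{\widetilde{\rmm}_1^\D(S)}(M,N)\otimes_\Z\Lambda$ is not a consequence of flatness plus finite generation of the integral Hom groups: the delicate point is surjectivity, i.e.\ why an arbitrary map of sheaves of $\Lambda$-modules $G\otimes_\Z\Lambda\to G'\otimes_\Z\Lambda$ (equivalently a map of abelian sheaves $G\to G'\otimes_\Z\Lambda$) is a finite $\Lambda$-linear combination of homomorphisms of group schemes, and finite generation of $\Hom(G,G')$ gives no information about that. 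What one actually needs is an argument of the following type: write $\Lambda$ as a filtered colimit of finite free $\Z$-modules (Lazard) and use that Hom out of a sheaf represented by a quasi-compact quasi-separated scheme, respectively out of $\rho_\sharp$ of a finitely presented local system, commutes with filtered colimits of \'etale sheaves (sections over qcqs objects commute with such colimits, and the homomorphism condition is detected at a finite stage).

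This description is moreover not optional for the rest of your argument. In your verification of axiom (2) (and (3)) and in your cofinality step, the non-$W$ leg of the relevant diagram is an arbitrary $\Lambda$-linear map between tensored objects, not a map of the form $c\otimes\mathrm{id}_\Lambda$; so the fiber-product construction of \Cref{composition} does not simply go through after $\otimes\Lambda$, because there is no morphism of group schemes $\Gamma\to G$ along which to form $\Gamma\times_G\widetilde{G}$. One must first use the Hom identification to write the map as $\sum_i c_i\otimes\lambda_i$ with $c_i\colon\Gamma\to G$ homomorphisms, and then pull back along a single refinement such as $\widetilde{G}^n\to G^n$ (whose kernel is still finite \'etale) to obtain one integral quasi-isomorphism that lifts all the $c_i$ simultaneously. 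Finally, your induction for cofinality is circular as stated: axiom (2) returns a $W_\Lambda$-map of uncontrolled composition length, so the inductive hypothesis cannot be applied to it. A working version peels the composite from the source end, where interleaved isomorphisms are harmless because they are invertible, and uses the lifting statement just described for a tensored quasi-isomorphism against an arbitrary $\Lambda$-linear map. With these repairs the strategy does prove the proposition, and it is in substance the argument the paper intends.
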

\begin{proof}
    The class $W_\Lambda$ satisfies the axioms of Gabriel-Zisman localizations (see \Cref{GZ}) by \Cref{composition,simplifiable} and is closed under composition which implies by \Cref{GZ} that $\widetilde{\rmm}_1^\D(S,\Lambda)$ is indeed a $1$-category and that the $\Hom$-sets are computed as stated. 
\end{proof}

\begin{lemma}\label{eliminatingptorsion}
    Let $S$ be a scheme and let $\Lambda$ be a flat $\Z$-algebra. Let $M=[\rho_\sharp(L)\to G]\otimes_\Z \Lambda$ be in $\widetilde{\rmm}_1^\D(S,\Lambda)$. Assume that $p$ is a prime number invertible in $\Lambda$ and let ${}_{p^\infty}L$ be its $p$-primary torsion part and $L^{(p)}=L/{}_{p^\infty}L$ be its prime to $p$ part. Then, there is a map $\rho_\sharp(L^{(p)})\to G$ and a morphism of effective Deligne $1$-motives $[\rho_\sharp(L)\to G]\to [\rho_\sharp(L^{(p)})\to G]$ that becomes an isomorphism after tensoring with $\Lambda$. 
    
    In particular, we can get the category of Deligne $1$-motives with coefficients $\Lambda$ by starting from complexes of the form $[\rho_\sharp(L)\to G]\otimes_\Z \Lambda$ with $L$ that has no $p$-primary torsion.
\end{lemma}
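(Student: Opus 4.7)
My plan is to produce an explicit morphism of effective Deligne $1$-motives whose two vertical components are, respectively, the canonical projection $L \twoheadrightarrow L^{(p)}$ and multiplication by a suitable power $p^n$ on $G$, each of which becomes invertible after tensoring with $\Lambda$.

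First I would reduce to the case where $S$ is connected, since $S$ is noetherian and hence has finitely many connected components, so one can treat each separately. Because $L$ lies in $\Loc_S(\Z)$, it is locally constant with finitely generated stalks; then ${}_{p^\infty}L$ is a locally constant sheaf with finite $p$-primary stalks, and over a connected noetherian base all stalks are abstractly isomorphic, so there is a uniform integer $n$ with $p^n\cdot{}_{p^\infty}L = 0$. This annihilation is the small but essential ingredient making the construction work globally.

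Next, given the structure map $\phi\colon \rho_\sharp(L)\to G$, I would check that $p^n\circ \phi$ vanishes on ${}_{p^\infty}L$: indeed, for a section $x$ of ${}_{p^\infty}L$ we have $p^n\phi(x)=\phi(p^nx)=0$. Hence $p^n\phi$ factors through the canonical projection $\pi\colon L\twoheadrightarrow L^{(p)}$, producing the desired $\phi^{(p)}\colon \rho_\sharp(L^{(p)})\to G$ together with a commutative square
\[\begin{tikzcd}
\rho_\sharp(L) \ar[r,"\phi"] \ar[d,"\rho_\sharp(\pi)"'] & G \ar[d,"p^n"] \\
\rho_\sharp(L^{(p)}) \ar[r,"\phi^{(p)}"'] & G.
\end{tikzcd}\]
This provides a morphism in $\widetilde{\rmm}_1^\D(S)$. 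To see it is invertible in $\widetilde{\rmm}_1^\D(S,\Lambda)$ it suffices to check that each vertical map is: $\pi\otimes\Lambda$ is an isomorphism because its kernel ${}_{p^\infty}L\otimes_\Z\Lambda$ vanishes (any $p$-primary section is killed by a power of $p$, which is a unit), and $p^n\otimes\Lambda$ is an isomorphism because $p$ is a unit in $\Lambda$.

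The final assertion then follows formally: every generator $[\rho_\sharp(L)\to G]\otimes_\Z\Lambda$ of $\widetilde{\rmm}_1^\D(S,\Lambda)$ is isomorphic, inside $\widetilde{\rmm}_1^\D(S,\Lambda)$, to one with $L$ replaced by the $p$-torsion-free sheaf $L^{(p)}$, so the full subcategory spanned by such objects already generates $\widetilde{\rmm}_1^\D(S,\Lambda)$ under isomorphism and the Gabriel--Zisman localization at $W_\Lambda$ computed from it coincides with $\rmm_1^\D(S,\Lambda)$. I expect no serious obstacle in the argument; the only point requiring care is the uniform bound on the exponent of ${}_{p^\infty}L$, which is why the noetherian hypothesis on $S$ is used.
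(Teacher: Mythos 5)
Your proposal is correct and follows essentially the same route as the paper: choose $n$ with $p^n\cdot{}_{p^\infty}L=0$ so that $p^n$ times the structure map kills ${}_{p^\infty}L$ and hence factors through $L^{(p)}$, giving the morphism $(\pi,\times p^n)\colon[\rho_\sharp(L)\to G]\to[\rho_\sharp(L^{(p)})\to G]$ whose components become isomorphisms after tensoring with $\Lambda$. The extra details you supply (the uniform exponent via noetherianity/connectedness, flatness of $\Lambda$ to see ${}_{p^\infty}L\otimes_\Z\Lambda=0$) are exactly the points the paper leaves implicit.
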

\begin{proof}
    There is an integer $n\geqslant 0$ such that the map ${}_{p^\infty}L\to G\xrightarrow{\times p^n}G$ vanishes. This yields a commutative diagram 
\[\begin{tikzcd}
    L \ar[r]\ar[d] & L^{(p)} \ar[dotted,d] \\
    G \ar["\times p^n",r] & G
\end{tikzcd}\]
whose rows become isomorphisms after tensoring with $\Lambda$. The result follows.
\end{proof}

\begin{rem}
    When $\Lambda=\Q$, \Cref{eliminatingptorsion} shows that the category $\rmm_1^\D(S,\Q)$ can be obtained by starting from the usual category of (torsion-free) Deligne $1$-motives and tensoring the $\Hom$-groups with $\Q$. Hence, we recover the definition of rational Deligne $1$-motives that is used in \cite{plh,plh2,haas}.
\end{rem}

\section{Good reduction for Deligne \texorpdfstring{$1$}{1}-motives with torsion}

Let us denote by $\Z'$ the ring $\Z[1/p\mid p \text{ residue characteristic exponent of }S].$
One of the goals of this section will be to show that Deligne $1$-motives with coefficients $\Z'$ form an abelian category whenever \Cref{etendable} below holds. We must first recall the some constructions on Deligne $1$-motives, namely their $\ell$-adic Tate modules. We will then show that Deligne $1$-motives with coefficients $\Z'$ over a connected normal scheme embed into their counterpart over the generic point and give a condition for a Deligne $1$-motive over the generic point to be of good reduction (that is to come from an object on the whole space). These results extend \cite[Proposition~A.11]{plh} and \cite[Theorem~4.10]{haas} to the case of $\Z'$-coefficients.

Note now that the natural functor $\widetilde{\rmm}_1^\D(S,\Lambda)\to \D((\Sm_S)_{\et},\Lambda)$ sends the maps in $W_\Lambda$ to equivalences and therefore factors uniquely through a map 
\[L_\D\colon \rmm_1^\D(S,\Lambda)\to \D((\Sm_S)_{\et},\Lambda).\]

\begin{definition}
    Let $S$ be a scheme and let $\ell$ be a prime number invertible on $S$. 
    The \emph{Tate module functor} is defined as the composition 
    \[T_\ell\colon\rmm_1^\D(S)\xrightarrow{L_\D} \D((\Sm_S)_{\et},\Z)
    \xrightarrow{(-)^\wedge_\ell}\D((\Sm_S)_{\et},\Z)^\wedge_\ell
    \xrightarrow{\rho^\sharp}\D(S_{\et},\Z)^\wedge_\ell\xrightarrow{\nu^*}\D(S_\proet,\Z_\ell)\]

\end{definition}
\begin{proposition}\label{Tatemodule}
    Let $S$ be a scheme, and let $\ell$ be a prime number invertible on $S$. The Tate module functor lands in $\Loc_S(\Z_\ell)$. 
    
    If $\Lambda$ is a flat $\Z$-algebra, it therefore gives rise to a functor \[T_\ell\colon \rmm_1^\D(S,\Lambda)\to \Loc_S(\Lambda\otimes_\Z \Z_\ell)\]
that we also call the Tate module functor.
\end{proposition}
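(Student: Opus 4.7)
The plan is to reduce to an effective Deligne $1$-motive $M=[\rho_\sharp(L)\to G]$ (this is legitimate because the functor $T_\ell$ is defined via the factorization $L_\D$ through the localization) and then compute $T_\ell(M)$ explicitly by using the stupid-truncation triangle
\[G[-1]\to M\to \rho_\sharp(L)\xrightarrow{+1}\]
in $\D((\Sm_S)_{\et},\Z)$. The key input is that whenever $\ell$ is invertible on $S$, the semi-abelian scheme $G$ is $\ell$-divisible on the étale site: multiplication by $\ell^n$ is an epimorphism with kernel the finite étale group scheme $G[\ell^n]$. Hence $G\otimes^L\Z/\ell^n\simeq G[\ell^n][1]$ and passing to the derived limit yields
\[G^\wedge_\ell\simeq T_\ell(G)[1],\]
where $T_\ell(G):=\lim_n G[\ell^n]$ is the classical $\ell$-adic Tate module, a locally constant sheaf on $S_\proet$ with free $\Z_\ell$-stalks of finite rank. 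On the other side, since $L$ is a locally constant finitely presented $\Z$-sheaf, the object $(\rho_\sharp L)^\wedge_\ell$ becomes, after applying $\rho^\sharp$ and $\nu^*$, isomorphic to $L\otimes_\Z\Z_\ell$ (the possible $\ell$-torsion part of $L$ still contributes a locally constant finitely presented $\Z_\ell$-sheaf).

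After applying $(-)^\wedge_\ell$, $\rho^\sharp$ and $\nu^*$ to the truncation triangle, we obtain a distinguished triangle in $\D(S_\proet,\Z_\ell)$
\[T_\ell(G)\to T_\ell(M)\to L^\wedge_\ell\xrightarrow{+1}\]
whose outer terms lie in $\Loc_S(\Z_\ell)$, concentrated in cohomological degree $0$. The associated long exact sequence then forces $T_\ell(M)$ to be concentrated in degree $0$ as well, sitting as an extension of two lisse $\Z_\ell$-sheaves. Since $\Loc_S(\Z_\ell)$ is closed under extensions (being the heart of the standard t-structure restricted to dualizable objects, which is abelian as $\Z_\ell$ is regular), we conclude that $T_\ell(M)\in\Loc_S(\Z_\ell)$ with finitely presented stalks.

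For the $\Lambda$-coefficient statement, the Tate module functor is $\Z$-linear by construction, so it sends $W_\mathrm{qiso}\otimes_\Z\Lambda$ and the class of isomorphisms to equivalences; by the universal property of the Gabriel-Zisman localization of \Cref{coeffLambda}, it factors through $\rmm_1^\D(S,\Lambda)$. Since $\Lambda$ is flat over $\Z$, tensoring the extension above by $\Lambda$ remains exact and the output lies in $\Loc_S(\Lambda\otimes_\Z\Z_\ell)$.

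The main technical point will be to justify rigorously the identification $G^\wedge_\ell\simeq T_\ell(G)[1]$ in the stable $\infty$-categorical sense, together with its compatibility with the change of site $\rho^\sharp$ and the pro-étale comparison $\nu^*$; once this is in hand, the rest of the argument is formal manipulation of the truncation triangle.
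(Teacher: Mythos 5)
Your proposal is correct and takes essentially the same route as the paper: the proof there uses exactly the same triangle $G[-1]\to L_\D(M)\to \rho_\sharp(L)$ to reduce to the two cases $[L\to 0]$ and $[0\to G]$, which it then settles by citing [AM1, Proposition~3.3.9] and [ahplh, Proposition~5.1(2)]. Your direct computations ($G^\wedge_\ell\simeq T_\ell(G)[1]$ via the multiplication-by-$\ell^n$ sequences with finite étale kernels, and $L^\wedge_\ell\simeq L\otimes_\Z\Z_\ell$), combined with closure of $\Loc_S(\Z_\ell)$ under extensions, are precisely the content of those citations, so the only difference is that you re-derive what the paper quotes.
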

\begin{proof}We can assume $\Lambda=\Z$. 
Take now a Deligne $1$-motive with torsion $M=[\rho_\sharp(L)\to G]$. Then, we have an exact triangle 
\[G[-1]\to L_\D(M)\to \rho_\sharp(L)\]
so we can assume $M$ to be either of the form $[L\to 0]$ or $[0\to G]$. The first case is \cite[Proposition 3.3.9]{AM1} and the second case is \cite[Proposition 5.1 (2)]{ahplh}.
\end{proof}

\begin{hyp}\label{etendable}
    Let $S$ be a connected normal scheme with generic point $\eta$, and let $\ell$ be a prime number invertible at $\eta$. Then any abelian variety over $\eta$ whose rational $\ell$-adic Tate module $T_\ell(A)\otimes_\Z \Q$ extends to a local system on $S[1/\ell]$ (\textit{i.e.} is unramified) extends to an abelian scheme on $S[1/\ell]$ (\textit{i.e.} has good reduction).
\end{hyp}
\begin{rem}\label{whendoesetendablehold}
    The rational $\ell$-adic Tate module $T_\ell(A)\otimes_\Z \Q$ extends to a local system on $S[1/\ell]$ if and only if $T_\ell(A)$ also extends as the latter is a lattice inside of $T_\ell(A)\otimes_\Z \Q$ and the action of the Galois group $\Gal(\overline{\eta}/\eta)$ at $\eta$ factors through the étale fundamental group $\pi_1^{\et}(S,\overline{\eta})$ if and only if this is true over some Galois-invariant lattice. In particular, \Cref{etendable} is know to hold when $S$ is a Dedekind scheme by the criterion of Néron, Ogg and Shafarevich \cite{Serre-Tate} and when $S$ is a $\Q$-scheme by \cite[Corollaire~4.2]{Grothendiecketendable}. It is also known to be false over $\mathbb{A}^2_k$ for $k$ a field of characteristic $p$ (see \cite[Remarques~4.6]{Grothendiecketendable}).
\end{rem}

If $f\colon T\to S$ is a morphism of scheme and $\Lambda$ is a flat $\Z$-algebra, then the pullback of group schemes induces a map 
\[f^*\colon \rmm_1^\D(S,\Lambda)\to \rmm_1^\D(T,\Lambda)\]
which maps $[\rho_\sharp(L)\to G]$ to $[\rho_\sharp(f^*(L))\to f^*(G)]$.
\begin{proposition}\label{f^*}
    Let $S$ be a scheme and let $\Lambda$ be a flat $\Z$-algebra. The diagram 
    \[
    \begin{tikzcd}
        \rmm_1^\D(S,\Lambda) \ar["f^*",r]\ar["L_\D",d] & \rmm_1^\D(T,\Lambda) \ar["L_\D",d]\\
        \D((\Sm_S)_{\et},\Lambda) \ar["f^*",r]& \D((\Sm_T)_{\et},\Lambda)
    \end{tikzcd}
    \]
    is commutative.
\end{proposition}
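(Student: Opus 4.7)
The strategy is to verify the square first at the level of effective Deligne $1$-motives and then descend through the Gabriel-Zisman localization. Let $\widetilde{\iota}_S\colon \widetilde{\rmm}_1^\D(S,\Lambda) \to \D((\Sm_S)_\et,\Lambda)$ denote the obvious inclusion sending an effective $1$-motive to its underlying two-term complex, so that by construction $L_\D$ is the unique factorization of $\widetilde{\iota}_S$ through the localization $\widetilde{\rmm}_1^\D(S,\Lambda) \to \rmm_1^\D(S,\Lambda)$. Write $\widetilde{f^*}\colon \widetilde{\rmm}_1^\D(S,\Lambda) \to \widetilde{\rmm}_1^\D(T,\Lambda)$ for the termwise pullback, which is the functor inducing $f^*$ after localization. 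The first step is to check that $\widetilde{f^*}$ preserves $W_\Lambda$: by \Cref{diagramchase}, a quasi-isomorphism of effective Deligne $1$-motives sits in a short exact sequence whose kernel is a finite étale group scheme $E$, and its base change $f^*E$ is again finite étale; since $f^*$ is exact on sheaves on the big étale site, pulling back preserves this short-exact-sequence description of a quasi-isomorphism. Hence $\widetilde{f^*}$ sends $W_\mathrm{qiso}$ into $W_\mathrm{qiso}$ and, after tensoring with $\Lambda$ and closing under composition with isomorphisms, preserves $W_\Lambda$; this produces the functor $f^*\colon \rmm_1^\D(S,\Lambda) \to \rmm_1^\D(T,\Lambda)$ appearing in the statement and shows that $L_\D\circ f^*$ is well-defined.

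Next I would verify commutativity at the effective level, that is, that $\widetilde{\iota}_T \circ \widetilde{f^*} = f^* \circ \widetilde{\iota}_S$ as functors $\widetilde{\rmm}_1^\D(S,\Lambda) \to \D((\Sm_T)_\et,\Lambda)$. Given $M = [\rho_\sharp(L)\to G]\otimes_\Z \Lambda$, the exactness of $f^*$ on $\Sh((\Sm_S)_\et,\Lambda)$ ensures that the derived pullback of the two-term complex is computed termwise; it then suffices to identify $f^*\rho_\sharp(L) \cong \rho_\sharp(f^*L)$, which follows from the compatibility of the change-of-site adjunction $(\rho_\sharp,\rho^\sharp)$ with pullback, and $f^*G \cong G\times_S T$, which expresses that the sheaf on $(\Sm_S)_\et$ represented by a smooth group scheme pulls back to the sheaf represented by its base change.

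Finally, the universal property of the localization $\widetilde{\rmm}_1^\D(S,\Lambda) \to \rmm_1^\D(S,\Lambda)$ forces the two functors $L_\D\circ f^*$ and $f^*\circ L_\D$ from $\rmm_1^\D(S,\Lambda)$ to $\D((\Sm_T)_\et,\Lambda)$ to coincide, since by the previous steps they agree after precomposition with the localization functor. The genuinely non-formal ingredients are the exactness of $f^*$ on the big étale topos and the identification $f^*\rho_\sharp \cong \rho_\sharp f^*$; both are standard facts about étale sheaves on schemes, and once they are granted the rest of the argument is formal bookkeeping with the Gabriel-Zisman calculus provided by \Cref{ComputationHomM1D} and \Cref{coeffLambda}.
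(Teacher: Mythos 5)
Your overall skeleton (check everything on effective $1$-motives, then descend through the Gabriel--Zisman localization using \Cref{ComputationHomM1D} and \Cref{coeffLambda}) is the right formal frame, and the verification that termwise base change preserves $W_\Lambda$ is essentially correct. But the step you dismiss as ``standard facts about étale sheaves'' is precisely where the content of \Cref{f^*} lies, and as written it is a genuine gap. The functor $f^*$ in the bottom row is the (derived) pullback on the big smooth-étale site $(\Sm_S)_{\et}$, and for a non-smooth morphism $f$ there is no forgetful functor $\Sm_T\to\Sm_S$, so $f^*$ is not simply restriction: it is only characterized by the facts that it preserves colimits and sends the \emph{free} sheaves $\Lambda(X)$, $X\in\Sm_S$, to $\Lambda(X\times_S T)$. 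Neither the exactness of the underived pullback on $\Sh((\Sm_S)_{\et},\Lambda)$ nor the identification $f^*G\simeq G\times_S T$ is a formal consequence of this: the sheaf $G$ carries its own group law and is \emph{not} the free sheaf $\Lambda(G)$, so the ``pullback of representables'' argument you invoke does not apply, and since exactness of the set-level pullback is unclear here (it is clear only when $f$ is smooth, or on the big site of all schemes), you cannot even transport the group structure through a geometric-morphism argument, let alone conclude that the derived pullback of the two-term complex is computed termwise.

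This is exactly why the paper's proof is not formal bookkeeping: it either performs a dévissage into rational and torsion coefficients (following the proof of \cite[Theorem~5.11]{rosassoto}), where the identification of $f^*$ on semi-abelian schemes is available by other means, or it uses Breen--Deligne resolutions (\cite[Theorem~4.5]{condensed}), which functorially resolve the abelian sheaf $G$ by finite direct sums of free sheaves $\Z[G^{a}]$; only after such a resolution can one compute $f^*$ of $G$ by the known formula on free sheaves and identify the result with $G\times_S T$. To repair your argument you would need to insert one of these two inputs (or an equivalent one) at the point where you assert $f^*G\cong G\times_S T$ and the termwise computation of the derived pullback; the remaining steps, including $f^*\rho_\sharp\simeq\rho_\sharp f^*$ (which the paper does grant as part of the compatibility of the adjunctions with pullbacks) and the descent through the localization, are fine.
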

\begin{proof}This can be derived from the case of rational coefficients and torsion coefficients as in the proof of \cite[Theorem~5.11]{rosassoto}. 
Alternatively, S. Tubach pointed out to me that it can be seen directly using Breen-Deligne resolutions (see \cite[Theorem~4.5]{condensed}).
\end{proof}

The following proposition is the first step in generalizing \cite[Proposition A.11]{plh} to $\Z'$-coefficients.
\begin{proposition}\label{embeddinggeneric}
    Let $S$ be a connected normal scheme with generic point $\eta$ and let $\Lambda$ be a flat $\Z$-algebra. The pullback map 
    \[\rmm_1^\D(S,\Lambda)\to \rmm_1^\D(\eta,\Lambda)\]
    is faithful.
\end{proposition}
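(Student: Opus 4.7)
The plan is to reduce first to $\Lambda = \Z$, then to prove faithfulness of the pullback on the non-localized category $\widetilde{\rmm}_1^\D$, and finally to descend that statement to $\rmm_1^\D$. By \Cref{coeffLambda}, for any effective Deligne $1$-motives $M$, $N$ over $S$ one has $\Hom_{\rmm_1^\D(S,\Lambda)}(M\otimes_\Z\Lambda, N\otimes_\Z\Lambda) = \Hom_{\rmm_1^\D(S)}(M,N)\otimes_\Z\Lambda$, and similarly over $\eta$. Flatness of $\Lambda$ over $\Z$ preserves injectivity under tensoring, so it suffices to treat the case $\Lambda = \Z$.

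I would next check that $\widetilde{\rmm}_1^\D(S) \to \widetilde{\rmm}_1^\D(\eta)$ is faithful. A morphism has two components $(u_L, u_G)$, both of which must vanish after restriction to $\eta$. The pullback $\Loc_S(\Z) \to \Loc_\eta(\Z)$ is faithful since a morphism of locally constant sheaves is determined by its action on the fiber at a geometric point, which gives $u_L = 0$. For $u_G \colon G \to G'$: since $G$ is semi-abelian and hence smooth and flat over the integral base $S$, the morphism $G \to S$ is open, so the generic fiber $G_\eta$ is topologically dense in $G$; as $G$ is reduced, the closed subgroup scheme $u_G^{-1}(0)$ containing this dense fiber must be all of $G$, so $u_G = 0$.

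Finally I would descend to the localizations. By \Cref{ComputationHomM1D} a morphism in $\rmm_1^\D(S)(M,N)$ is represented by a roof $M \xleftarrow{s} M' \xrightarrow{g} N$ with $s$ a quasi-isomorphism, and since the colimit defining Hom-sets is filtered, $g$ represents zero iff there exists a further quasi-isomorphism $v \colon M'' \to M'$ with $g \circ v = 0$ in $\widetilde{\rmm}_1^\D(S)$. Suppose the class of $g$ vanishes at $\eta$: there is then a quasi-isomorphism $v_\eta \colon M''_\eta \to M'_\eta$ over $\eta$ with $g_\eta \circ v_\eta = 0$ in $\widetilde{\rmm}_1^\D(\eta)$. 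The crucial observation, coming from \Cref{diagramchase}, is that both components of any quasi-isomorphism of effective $1$-motives are surjective morphisms of sheaves (their cokernels vanish). Hence the vanishing of $g_\eta \circ v_\eta$ componentwise forces $g_\eta = 0$ in $\widetilde{\rmm}_1^\D(\eta)$, and the previous step gives $g = 0$ in $\widetilde{\rmm}_1^\D(S)$; a fortiori the class vanishes in $\rmm_1^\D(S)$. There is no substantive obstacle once one makes this surjectivity observation: the witness $v_\eta$ can be stripped away, reducing everything to the non-localized statement.
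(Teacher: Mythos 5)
Your proof is correct, but it takes a genuinely different route from the paper. The paper performs a dévissage on the target $N$: using the exact sequence relating $G'[-1]$, $N$ and $\rho_\sharp(L')$, it treats the semiabelian part via \Cref{usualDel} together with the extension result over a normal base (EGA~I, Lemme~7.2.2.1), and the lattice part by an explicit analysis of the colimit computing the localized Hom-sets, showing that each term injects into it. You instead prove faithfulness already at the effective level $\widetilde{\rmm}_1^\D(S)\to\widetilde{\rmm}_1^\D(\eta)$ (stalk/connectedness for the lattice component, density of $G_\eta$ in the reduced $G$ with $G'$ separated for the semiabelian component), and then descend through the localization by observing that any quasi-isomorphism of effective $1$-motives has epimorphic components (\Cref{diagramchase}), so a witness $v$ with $g_\eta\circ v=0$ forces $g_\eta=0$; your implicit use of two-out-of-three to know the witness supplied by the filtered colimit (\Cref{ComputationHomM1D}) is itself a quasi-isomorphism is legitimate, since quasi-isomorphisms of complexes satisfy two-out-of-three. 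In effect your epi-component observation shows that $\widetilde{\rmm}_1^\D\to\rmm_1^\D$ is faithful, which makes the descent step essentially formal, and your argument only uses that $S$ is integral, so it avoids the normality input entirely (normality is of course still essential for the fullness statement of \Cref{embeddinggeneric2}). Two cosmetic points: you should note that restriction to $\eta$ preserves quasi-isomorphisms (immediate from the finite étale kernel description in \Cref{diagramchase}, which is stable under base change), so the restricted roof does represent the restricted class, and the witness over $\eta$ is just some effective $1$-motive over $\eta$, not a restriction from $S$ as the notation $M''_\eta$ suggests — but neither affects the argument.
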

\begin{proof} We can assume $\Lambda=\Z$. 
Letting $M=[\rho_\sharp(L)\to G]$ and $N=[\rho_\sharp(L')\to G']$ be objects of $\widetilde{\rmm}_1^\D(S)$, we want to prove that the natural map
 \[\Hom_{\rmm_1^\D(S)}(M,N)\to \Hom_{\rmm_1^\D(\eta)}(M_\eta,N_\eta)\]
is injective. 
 As we have a functorial exact sequence:
 \[0\to\Hom_{\widetilde{\rmm}_1^\D(S)}(-,G'[-1])\to \Hom_{\widetilde{\rmm}_1^\D(S)}(-,N)\to \Hom_{\widetilde{\rmm}_1^\D(S)}(-,\rho_\sharp(L')[0])\]
 we get an exact sequence (note that the colimit that defines our $\Hom$-sets is filtered by \Cref{composition}) 
 \[0\to\Hom_{\rmm_1^\D(S)}(M,G'[-1])\to \Hom_{\rmm_1^\D(S)}(M,N)\to \Hom_{\rmm_1^\D(S)}(M,\rho_\sharp(L')[0]).\]
The same remains true over $\eta$. But the maps in the following diagram
 \[\begin{tikzcd}
     \Hom_{\widetilde{\rmm}_1^\D(S)}(M,G'[-1]) \ar[r]\ar[d]& \Hom_{\widetilde{\rmm}_1^\D(S)}(G[-1],G'[-1])\ar[d]\\
     \Hom_{\rmm_1^\D(S)}(M,G'[-1]) \ar[r]& \Hom_{\rmm_1^\D(S)}(G[-1],G'[-1])
 \end{tikzcd}\]
are equivalences: the top horizontal arrow is an equivalence because everything is happening in the category of chain complexes while the vertical arrows are equivalences by \Cref{usualDel}. The same fact is also true over $\eta$ and by \cite[Lemme 7.2.2.1]{ega1}, the map 
\[\Hom_{\Sh((\Sm_S)_{\et},\Z)}(G,G')\to \Hom_{\Sh((\Sm_\eta)_{\et},\Z)}(G_\eta,G'_\eta)\]
is an equivalence because $S$ is normal. Hence, we are reduced to the case $N=\rho_\sharp(L')[0].$ Now, by \Cref{diagramchase} and \Cref{ComputationHomM1D}
 \begin{align*}
     \Hom_{\rmm_1^\D(S)}(M,N)
     &=\colim_{\widetilde{G}\xrightarrow{\varphi} G} \ \Hom_{\widetilde{\rmm}_1^\D(S)}([\rho_\sharp(L\times_G \widetilde{G})\to \widetilde{G}],\rho_\sharp(L')[0]) \\
     &=\colim_{\widetilde{G}\xrightarrow{\varphi} G}\ \Hom_{\Loc_S(\Z)}(L\times_G \widetilde{G},L')
 \end{align*}
 where the colimit runs through those surjective maps of semi-abelian schemes $\widetilde{G}\xrightarrow{\varphi} G$ whose kernel $\ker(\varphi)$ is finite étale. From the exact sequence \[0\to \ker(\varphi)\to (L\times_G \widetilde{G}) \to L\to 0,\] we get an exact sequence 
 \[0 \to \Hom_{\Loc_S(\Z)}(L,L') \to \Hom_{\rmm_1^\D(S)}(M,N) \to \colim_{\widetilde{G}\xrightarrow{\varphi} G}\ \Hom_{\Sh((\Sm_S)_{\et},\Z)}(\ker(\varphi),\rho_\sharp(L')) \]
 and we are reduced to the case where $M=G[-1]$. The faithfulness in the cases of étale local systems follows from \cite[Lemme 7.2.2.1]{ega1}. Hence, we are reduced to showing that if $L$ is a torsion étale local system on $S$ and if $G$ is a semi-abelian $S$-group scheme, the map 
\[\colim_{\widetilde{G}\xrightarrow{\varphi} G}\Hom(\ker(\varphi),\rho_\sharp(L'))\to \colim_{G'\xrightarrow{\psi} G_\eta}\Hom(\ker(\psi),\rho_\sharp(L'))\]
where the colimit on the left runs through those finite étale $\widetilde{G}\xrightarrow{\varphi} G$ and the colimit on the right runs through those finite étale $G'\xrightarrow{\varphi} G_\eta$. We now claim that the maps from each of these $\Hom$-groups to the colimit is injective. As $\ker(\varphi_\eta)=\ker(\varphi)_\eta$, this claim implies the desired injectivity. Let us now prove the claim, namely that for any $\varphi_0\colon G_0\to G$ finite étale, the map
\[\Hom(\ker(\varphi_0),\rho_\sharp(L'))\to\colim_{\widetilde{G}\xrightarrow{\varphi} G}\Hom(\ker(\varphi),\rho_\sharp(L'))\]
is injective. If $G_1\to G_0\to G$ is such that $\varphi_1\colon G_1\to G$ is finite étale and the map $\ker(\varphi_1)\to \ker(\varphi_0)\to \rho_\sharp(L')$ vanishes, then the map $\ker(\varphi_0)\to \rho_\sharp(L')$ also vanishes: $G_1\to G_0$ is finite étale and therefore surjective. 
\end{proof}
We now want to prove that the functor of \Cref{embeddinggeneric} is fully faithful with $\Z'$-coefficients. We will need the following result.
\begin{proposition}\label{HomcoeffZ'}Let $S$ be a scheme.
If $M=[\rho_\sharp(L)\to G]$ and $N$ belong to $\widetilde{\rmm}_1^\D(S,\Z)$, we have a canonical isomorphism
\[\Hom_{\rmm_1^\D(S,\Z')}(M\otimes_\Z \Z',N\otimes_\Z\Z')=\colim_{G\xrightarrow{\times n}G} \Hom_{\widetilde{\rmm}_1^\D(S,\Z)}([\rho_\sharp(L)\times_G G\to G],N)\otimes_\Z \Z'\]
where $n$ runs through the set of integers which are invertible on $S$ (or equivalently non-invertible in $\Z'\setminus\{0\}$). 
\end{proposition}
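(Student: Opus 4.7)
The plan is to combine \Cref{coeffLambda} and \Cref{ComputationHomM1D} to express the left-hand side as a filtered colimit indexed by all quasi-isomorphisms $M' \to M$ in $\widetilde{\rmm}_1^\D(S)$, and then to show that the subsystem indexed by the $\times n$ quasi-isomorphisms (for $n$ invertible on $S$) is cofinal once we tensor with $\Z'$.

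First I would apply \Cref{coeffLambda} to identify the left-hand side with $\Hom_{\rmm_1^\D(S)}(M,N) \otimes_\Z \Z'$, then apply \Cref{ComputationHomM1D} to write $\Hom_{\rmm_1^\D(S)}(M,N)$ as a filtered colimit. Since filtered colimits commute with tensor products, one obtains
\[\Hom_{\rmm_1^\D(S,\Z')}(M\otimes_\Z \Z',N\otimes_\Z\Z') = \colim_{M'\xrightarrow{\mathrm{qiso}} M} \Hom_{\widetilde{\rmm}_1^\D(S)}(M',N)\otimes_\Z \Z'.\]
Each $\widetilde{M}_n := [\rho_\sharp(L)\times_G G\to G] \to M$ with $n$ invertible on $S$ is a quasi-isomorphism with kernel $G[n]$ finite étale, and these form a filtered subsystem of quasi-isomorphisms to $M$ under divisibility. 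The inclusion of this subsystem yields a natural map from the right-hand side of the proposition to the colimit above, and the task is to show it is an isomorphism.

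The essential step is surjectivity. Given a quasi-isomorphism $u\colon M'=[\rho_\sharp(L')\to G']\to M$ with kernel $E$ finite étale (\Cref{diagramchase}), I would decompose $E=E'\oplus E''$ canonically, where $E'$ collects the $p$-primary parts for $p$ not a residue characteristic of $S$ (so the order $d'$ of $E'$ is invertible on $S$) and $E''$ collects the rest (so the exponent $m$ of $E''$ is a unit in $\Z'$). Since $m$ annihilates $\Hom_{\Sh((\Sm_S)_{\et},\Z)}(E'',F)$ for every sheaf $F$, for any $\phi\colon M'\to N$ the map $m\phi$ vanishes on the copies of $E''$ inside $\rho_\sharp(L')$ and $G'$, and hence factors as $m\phi=\psi\circ q$ where $q\colon M'\to M''=M'/E''$ is the induced quasi-isomorphism and $\psi\colon M''\to N$. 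Since $m$ is a unit in $\Z'$, this yields $[\phi]=m^{-1}[\psi]$ in the tensored colimit, reducing to the case where the remaining quasi-isomorphism $M''\to M$ has kernel $E'$ of order $d'$ invertible on $S$. In that case the containment $E'\subset G''[d']$ permits factoring $\times d'\colon G''\to G''$ through the quotient $G''\to G$, producing a lift $\iota\colon G\to G''$ of $\times d'$ along $G''\to G$. One then assembles $\iota$ with a map $\alpha\colon L\times_G G\to L_N=L'/E''$ constructed by adjusting arbitrary local lifts along the surjection $L_N\to L$ (the adjustment depending only on the lift modulo $E'$, hence canonical) into a morphism of complexes $\widetilde{M}_{d'}\to M''$ over $M$. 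Composition of this map with $\psi$ exhibits $[\phi]$ as the image of a class in the $\times n$ subsystem.

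The main obstacle is exactly this residue-characteristic part $E''$ of the kernel: no $\times n$ quasi-isomorphism with $n$ invertible on $S$ can directly account for such torsion, so the argument genuinely needs the $\Z'$-linearization in order to let the Hom-contributions from $E''$ vanish. The injectivity half would proceed in parallel, using the filteredness of the $\times n$ subsystem under divisibility together with the same kernel decomposition to replace any witness of an identification in the full colimit by a witness in the $\times m$ subsystem, modulo a $\Z'$-denominator.
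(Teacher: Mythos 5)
Your proposal is correct and follows essentially the same route as the paper's proof: both use \Cref{ComputationHomM1D} and \Cref{coeffLambda} to write the left-hand side as a filtered colimit over quasi-isomorphisms tensored with $\Z'$, split the finite étale kernel into a part of order invertible on $S$ and a residue-characteristic part that is disposed of after $\Z'$-linearization, and then factor multiplication by the order of the remaining kernel through the cover to land in the $\times n$ subsystem. The only differences are presentational (you argue element-wise with $m\phi$ where the paper identifies the relevant Hom-groups after $\otimes_\Z \Z'$ via the isomorphism of complexes), and the paper additionally verifies two points you leave implicit, namely that the quotient $G'/E''$ is again a semi-abelian scheme (so that $M''$ is a legitimate index) and that multiplication by $n$ on $G$ is finite étale.
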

\begin{proof}
    By \Cref{ComputationHomM1D} and \Cref{coeffLambda}, we have a canonical isomorphism
\[\Hom_{\rmm_1^\D(S,\Z')}(M\otimes_\Z \Z',N\otimes_\Z\Z') =\colim_{\widetilde{G}\xrightarrow{\varphi}G} \Hom_{\widetilde{\rmm}_1^\D(S,\Z)}([\rho_\sharp(L)\times_G \widetilde{G}\to \widetilde{G}],N)\otimes_\Z \Z'\]
with $\varphi\colon \widetilde{G}\to G$ finite étale and $\widetilde{G}$ semi-abelian. Fix such a $\varphi\colon \widetilde{G}\to G$. 
Write 
\[\begin{tikzcd}
    0\ar[r] & \widetilde{T}\ar[r] \ar["\varphi_T",d]& \widetilde{G}\ar[r] \ar["\varphi",d] & \widetilde{A}\ar[r] \ar["\varphi_A",d]& 0 \\
    0\ar[r] & T\ar[r] & G\ar[r] & A \ar[r]& 0
\end{tikzcd}\]
with $T$ and $\widetilde{T}$ tori and $A$ and $\widetilde{A}$ abelian schemes. In the proof of \Cref{composition}, we saw that $\ker(\varphi_T)$ is finite étale. By \cite[Corollary~B.3.3]{conrad}, it is therefore of multiplicative type, and therefore up to passing to a finite étale cover, it is a product of $\mu_n$s where the $n$s need to be invertible on $S$ by étaleness. The group-scheme $\ker(\varphi_A)$ is also finite étale as a quotient of finite étale group schemes. Write \[\ker(\varphi)=K\oplus K'\] with $K$ of $n$-torsion, $n$ invertible on $S$ and $K'$ made of $n'$-torsion with $n'$ a product of residue characteristic exponents of $S$. The map of algebraic spaces $\overline{\varphi}\colon \widetilde{G}/K'\to G$ is finite étale and thus $\widetilde{G}/K'$ is a commutative group-scheme. We claim that it is semi-abelian. Indeed, it fits in an exact sequence 
\[0\to \widetilde{T}\to \widetilde{G}/K' \to \widetilde{A}/K'\to 0\]
and the map $\widetilde{A}/K'\to A$ is finite étale as its kernel is $\ker(\varphi_A)/K'$ which is finite étale, hence $\widetilde{A}/K'$ is an abelian scheme. 

Now note that the map \[[\rho_\sharp(L)\times_G\widetilde{G}\to \widetilde{G}]\otimes_\Z \Z'\to \rho_\sharp(L)\times_G(\widetilde{G}/K')\to \widetilde{G}/K']\otimes_\Z \Z\] is an isomorphism as $K'\otimes_\Z \Z'=0$. Hence, in the colimit, we may restrict to those maps $\varphi\colon \widetilde{G}\to G$ whose kernel is of $n$-torsion with $n$ invertible on $S$. Now, for any such map, we have a commutative diagram 
\[\begin{tikzcd}
    G\ar["\times n",r]\ar[dotted,d] & G \\
    \widetilde{G}\ar["\varphi",ur]
\end{tikzcd}\]
The result then follows from the fact that the multiplication by $n$ map on $G$ is finite étale: it is étale by \cite[Section~7.3, Lemma~2]{neronmodels} and it therefore suffices to show that it is proper in both the case of tori and the case of abelian schemes. As being proper is fpqc local on the base by \cite[Tag~02L1]{stacks}, the case of tori reduces to that of $\mathbb{G}_{m,S}$ where it is true because $\mu_{n,S}$ is indeed finite. The case of abelian schemes is also straightforward because they are proper on $S$ so any map between them is proper.
\end{proof}
We can now prove fullness.
\begin{theorem}\label{embeddinggeneric2}
    Let $S$ be a connected normal scheme with generic point $\eta$ and let $\Lambda$ be a flat $\Z'$-algebra. 
Then, the restriction to the generic point 
\[\rmm_1^\D(S,\Lambda)\to \rmm_1^\D(\eta,\Lambda)\]
is fully faithful.
\end{theorem}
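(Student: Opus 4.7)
The plan is to reduce to $\Lambda = \Z'$ by flat base change, and then, applying \Cref{HomcoeffZ'} on both $S$ and $\eta$, to write the two sides of the restriction map as filtered colimits
\begin{align*}
\Hom_{\rmm_1^\D(S, \Z')}(M \otimes \Z', N \otimes \Z') &= \colim_{n} \Hom_{\widetilde{\rmm}_1^\D(S)}(M_n, N) \otimes \Z', \\
\Hom_{\rmm_1^\D(\eta, \Z')}(M_\eta \otimes \Z', N_\eta \otimes \Z') &= \colim_{m} \Hom_{\widetilde{\rmm}_1^\D(\eta)}(M_m^\eta, N_\eta) \otimes \Z',
\end{align*}
with $M_k := [\rho_\sharp(L \times_{G, \times k} G) \to G]$, where $n$ runs over integers invertible on $S$ and $m$ over those invertible on $\eta$. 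Faithfulness is already \Cref{embeddinggeneric}, so only fullness remains.

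For each $n$ invertible on $S$, I would check that the restriction $\Hom_{\widetilde{\rmm}_1^\D(S)}(M_n, N) \to \Hom_{\widetilde{\rmm}_1^\D(\eta)}(M_n^\eta, N_\eta)$ is a bijection. A chain map between effective $1$-motives decomposes into a map of local systems in degree $0$ (note that $L \times_{G, \times n} G$ is a local system since $G[n]$ is finite étale for $n$ invertible on $S$) and a map of semi-abelian schemes in degree $1$, subject to a commutativity constraint. The first restricts equivalently by surjectivity of $\pi_1(\eta) \twoheadrightarrow \pi_1(S)$ (valid for $S$ normal connected), the second by \cite[Lemme~7.2.2.1]{ega1} as already used in the proof of \Cref{embeddinggeneric}, and the commutativity square descends via separatedness of $G'/S$ and density of the generic fibre of $L_n$.

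The more delicate step is a cofinality statement: after $\otimes \Z'$, the $\eta$-colimit can be restricted to indices $n$ invertible on $S$. Given $m$ invertible on $\eta$, factor $m = n m'$ with $n$ invertible on $S$ and $m' \in (\Z')^\times$ a product of residue characteristic exponents of $S$. The natural transition $\iota_{m, n} \colon M_m^\eta \to M_n^\eta$, with degree-$1$ component $\times m'$, fits in a short exact sequence of chain complexes of étale sheaves whose kernel is the acyclic complex $K = [G_\eta[m'] \xrightarrow{\id} G_\eta[m']]$. A direct computation identifies $\Hom(K, N_\eta)$ with $\Hom_{\Sh}(G_\eta[m'], L'_\eta)$, and the image of any such map lies in the $m'$-torsion subsheaf of $L'_\eta$, which vanishes after $\otimes \Z'$. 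The associated left-exact sequence thus forces $\iota_{m, n}^{\ast} \otimes \Z'$ to be an isomorphism, so every element of the $\eta$-colimit admits a representative at some level $n$ invertible on $S$. Combined with the level-wise bijection of the second paragraph, this yields fullness; the main technical hurdle will be setting up this short exact sequence and establishing the torsion-vanishing of $\Hom(K, N_\eta) \otimes \Z'$, which is what upgrades $\iota_{m, n}$ from a mere quasi-isomorphism to an actual isomorphism on Hom-groups after inverting the residue characteristic exponents.
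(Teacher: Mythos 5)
Your proposal follows essentially the same route as the paper: reduce to $\Lambda=\Z'$, use \Cref{HomcoeffZ'} to replace an arbitrary map by a multiplication-by-$n$ roof with $n$ invertible on $S$, and extend effective maps by extending the lattice part via surjectivity of $\pi_1(\eta)\to\pi_1(S)$ and the semi-abelian part via the extension theorem over a normal base. Two sub-steps are packaged differently, and both comparisons are worth recording. First, your cofinality lemma (the short exact sequence with acyclic kernel $[G_\eta[m']\xrightarrow{\id}G_\eta[m']]$, whose $\Hom$ into $N_\eta$ is $m'$-torsion and so dies after $\otimes\Z'$) is a clean, explicit rendering of what the paper extracts more implicitly by running \Cref{HomcoeffZ'} over $\eta$ with $\Z'$-coefficients to get an index $n$ invertible on $S$ directly; your version is correct (note that $m'$ is prime to the characteristic exponent of $\eta$, so multiplication by $m'$ is étale surjective and the kernel is an étale sheaf, which is what makes the sequence a short exact sequence of complexes of étale sheaves). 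Second, a citation slip: for the bijectivity of $\Hom_S(G,G')\to\Hom_\eta(G_\eta,G'_\eta)$ the substantive input is the extension statement \cite[Proposition~A.11]{plh} (which the paper invokes at this very point); \cite[Lemme~7.2.2.1]{ega1} only gives the uniqueness half.

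The one place where you are genuinely too quick is the clause ``the commutativity square descends via separatedness of $G'/S$ and density of the generic fibre of $L_n$.'' This is exactly the step on which the paper spends most of its proof: one must show that a morphism of sheaves $\rho_\sharp(L_n)\to G'$ vanishing at $\eta$ vanishes, and the paper handles the toric part by the integrality of $\mathcal{O}_S(S)\hookrightarrow k(\eta)$ and the abelian part via spreading out, Néron models at codimension-one points and \cite[Corollaire~IX.1.4]{raynaudfaisceauxamples}. Your lighter argument can be made to work, but you should spell it out: since $S$ is normal, $L_n$ (a locally constant sheaf with finitely generated values) is trivialized on a finite étale cover $S'\to S$ whose connected components are integral; vanishing of the map may be checked on $S'$, where a morphism from a constant finitely generated sheaf to $G'$ is a tuple of (torsion) sections of $G'(S')$; these restrict to zero at the generic points of $S'$ (which lie over $\eta$), and a section of the separated $S'$-scheme $G'$ agreeing with the zero section at the generic point of an integral base agrees with it everywhere (the equalizer is a closed subscheme containing the generic point of a reduced scheme). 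With that paragraph added, your proof is complete and in fact avoids the Néron-model detour of the paper's abelian case.
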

\begin{proof}
Let us start by proving that if we have objects $M=[\rho_\sharp(L)\to G]$ and $ M'=[\rho_\sharp(L')\to G']$
and an effective map $M_\eta\to M'_\eta$, we have an effective map $M\to M'$ that restricts to $M_\eta\to M'_\eta$. Our map indeed yields maps $L_\eta \to L'_\eta$ and $G_\eta \to G'_\eta$ which extend uniquely to maps $L\to L'$ and $G\to G'$ by \cite[Exposé~V, Proposition~8.2]{sga1} and \cite[Proposition A.11]{plh} respectively. We have to prove that the resulting maps $\rho_\sharp(L)\to G'$ coincide knowing that they coincide at $\eta$. It suffices to show that a map $f\colon \rho_\sharp(L)\to G'$ that vanishes at $\eta$ vanishes. This reduces to proving that the induced map to the abelian part of $G'$ vanishes and then proving that the resulting map to the toric part of $G'$ vanishes. 

In the toric case we can replace $S$ with a finite étale cover and therefore assume that $G'$ is split and $L$ is trivial. We may then further assume that $L$ is of the form $\Z/n\Z_S$ (with $n$ possibly $0$) and $G'=\mathbb{G}_{m,S}$. Hence the data of a map $\rho_\sharp(L)\to G'$ is the same as that of an element $x$ of $\mathcal{O}_S^\times(S)$ such that $x^n=1$, as the map vanishes at $\eta$, this element $x$ is the unit in the residue field $k(\eta)$ whence in $\mathcal{O}_S^\times(S)$ by integrality.

Hence, we can assume $G'$ to be an abelian scheme. By \cite[Théorème~8.8.2(i)]{ega4}, there is an open subset $U$ of $S$ such that $f|_U$ vanishes. The closed complement $Z$ of $U$ has finitely many points which are of codimension $1$ in $S$, denote them by $s_1,\ldots,s_r$ and let $S_i=\Spec(\mathcal{O}_{S,s_i})$. Since $S$ is normal, $S_i$ is the spectrum of a discrete valuation ring. Hence, by \cite[Section~1.2, Proposition~8]{neronmodels}, the $G'|_{S_i}$ is a Néron model of $G'_\eta$ and thus $f|_{S_i}$ vanishes. By \cite[Théorème~8.8.2(i)]{ega4} again, there is an open neighborhood $U_i$ of $S_i$ such that $f|_{U_i}$ vanishes. Hence, we can assume that $U$ contains all points of codimension $1$ (hence all points of depth $1$ as $S$ is normal) and apply \cite[Collaire~IX.1.4]{raynaudfaisceauxamples} which yields that $f$ vanishes.

To prove the result, we can assume that $\Lambda=\Z'$.
We know that our functor is faithful by \Cref{embeddinggeneric}. 
 Assume given a map $M_\eta\otimes_\Z\Z'\to M'_\eta\otimes_\Z\Z'$ in $\rmm_1^\D(\eta,\Z')$. Using \Cref{HomcoeffZ'}, there is an integer $n$ invertible on $S$ such that this map comes from maps \[M_\eta \xleftarrow{[n]}M_\eta^{(n)}\to M'_\eta\]
 up to multiplication by $\frac{1}{m}$ for some $m$ invertible in $\Z'$ and with $M_\eta^{(n)} \xrightarrow{[n]}M_\eta$ the map given by pulling back the multiplication by $n$ map $G_\eta \to G_\eta$. Both of these maps extend by the effective case, so that we get a map $M\otimes_\Z \Z'\to M'\otimes_\Z \Z'$ in $\rmm_1^\D(S,\Z')$ extending the previous one. Hence the functor is full which yields the result.
\end{proof}

\begin{definition}
    Let $S$ be a connected normal scheme with generic point $\eta$ and let $\Lambda$ be a flat $\Z$-algebra. We say that a Deligne $1$-motive with coefficients $\Lambda$ over $\eta$ has \emph{good reduction} if it lies in the essential image of the map $\rmm_1^\D(S,\Lambda)\to \rmm_1^\D(\eta,\Lambda)$.
\end{definition}

The following proposition generalizes \cite[Theorem~4.10]{haas} to Deligne $1$-motives with torsion.
\begin{proposition}\label{bonnereduction}
    Let $S$ be a connected normal scheme with generic point $\eta$ of characteristic exponent $p$ and let $\Lambda$ be a flat $\Z'$-algebra. Assume that \Cref{etendable} holds. 
    A Deligne $1$-motive with coefficients $\Lambda$ over $\eta$ has good reduction if and only if for any prime number $\ell\neq p$, its $\ell$-adic Tate module has good reduction on $S[1/\ell]$. 
\end{proposition}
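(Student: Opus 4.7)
The ``only if'' direction is immediate: by \Cref{f^*} and the functoriality built into the construction of the Tate module in \Cref{Tatemodule}, any extension $M$ of $M_\eta$ on $S$ yields an extension $T_\ell(M)\in\Loc_{S[1/\ell]}(\Lambda\otimes_\Z\Z_\ell)$ of $T_\ell(M_\eta)$ for each $\ell\neq p$.

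For the converse, write $M_\eta=[\rho_\sharp(L_\eta)\to G_\eta]$ with $0\to T_\eta\to G_\eta\to A_\eta\to 0$ the standard decomposition of $G_\eta$. The distinguished triangle $G_\eta[-1]\to M_\eta\to\rho_\sharp(L_\eta)$ together with this extension induces a length-$3$ filtration of $T_\ell(M_\eta)$ with graded pieces $T_\ell(T_\eta)$, $T_\ell(A_\eta)$, and $L_\eta\otimes\Z_\ell$; since sub- and quotient-local-systems of an extendable local system themselves extend, the hypothesis implies that each of these pieces extends to $S[1/\ell]$. The strategy is then to first extend the individual constituents $A_\eta$, $T_\eta$, $L_\eta$ to $S$ and only afterwards to reconstruct the extension $G$ and the structure map $L\to G$.

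The abelian scheme $A$ is produced by \Cref{etendable}: for each $\ell\neq p$, it gives an abelian scheme $A^{(\ell)}$ on $S[1/\ell]$ extending $A_\eta$, and these glue to $A$ on $S$ via the uniqueness of abelian scheme extensions from \cite[Proposition~A.11]{plh} over the covering $\{S[1/\ell]\}_{\ell\neq p}$ of $S$. For $L_\eta$ and the cocharacter lattice $X_*(T_\eta)$ --- both corresponding to $\pi_1^\et(\eta)$-representations with finite image, after reducing $L_\eta$ to its torsion-free part via \Cref{eliminatingptorsion} --- the injection $\Lambda_\eta\hookrightarrow\Lambda_\eta\otimes_\Z\Z_\ell$ shows that factoring of the $\Z_\ell$-representation through $\pi_1^\et(S[1/\ell])$ implies factoring of the integral representation, so both extend over each $S[1/\ell]$ and glue to local systems $L$ and $X_*(T)$ on $S$ by the same covering argument; the latter produces a torus $T$ on $S$ with generic fiber $T_\eta$ via \cite[Expos\'e~X, Th\'eor\`eme~7.1]{sga3}.

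It remains to reconstruct $G$ and the map $L\to G$. By Barsotti--Weil, extensions of $A$ by $T$ over $S$ are classified by morphisms $X^*(T)\to A^\vee$ from the character lattice to the dual abelian scheme; the given $\pi_1^\et(\eta)$-equivariant map $X^*(T_\eta)\to A^\vee_\eta$ is automatically $\pi_1^\et(S)$-equivariant by surjectivity of $\pi_1^\et(\eta)\twoheadrightarrow\pi_1^\et(S)$ (which holds for $S$ connected normal), and the underlying sections of $A^\vee$ extend from $\eta$ to $S$ by properness of $A^\vee$ and \cite[Corollaire~IX.1.4]{raynaudfaisceauxamples}, along the lines of the proof of \Cref{embeddinggeneric2}. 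The final point is to extend the structure map $L_\eta\to G_\eta$ to a morphism $L\to G$: its class in $\Ext^1(\rho_\sharp(L_\eta),G_\eta)$ is $\ell$-adically detected by the extension class of $T_\ell(M_\eta)$ in $\Ext^1(L\otimes\Z_\ell,T_\ell(G))$, which extends to $S[1/\ell]$ by hypothesis, so a Kummer-theoretic compatibility argument combined with gluing over $\{S[1/\ell]\}_{\ell\neq p}$ yields the required morphism. The main obstacle lies here, in upgrading $\ell$-adic extension data to a genuine morphism of sheaves on $S_\et$; all other pieces reduce to classical facts on extension of abelian varieties, local systems, tori, and sections of proper schemes.
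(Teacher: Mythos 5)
Your reduction of the hypothesis to the graded pieces and your extension of $A_\eta$, $T_\eta$ and $L_\eta$ are all in the spirit of the paper's argument (the paper applies \cite[Theorem~4.10]{haas} directly to $G_\eta$ to get a semi-abelian extension over each $S[1/\ell]$ and glues via \Cref{embeddinggeneric2}, rather than extending $A$ and $T$ separately and reassembling $G$ by Barsotti--Weil, but your route is plausible). The genuine gap is exactly where you say ``the main obstacle lies'': extending the structure map $u_\eta\colon\rho_\sharp(L_\eta)\to G_\eta$ to a morphism $\rho_\sharp(L)\to G$ over $S$. Asserting that a ``Kummer-theoretic compatibility argument combined with gluing'' upgrades the extendability of the $\ell$-adic extension classes to a genuine morphism of étale sheaves is not a proof; this upgrade is the nontrivial content of the proposition, and nothing in your sketch explains how to pass from unramifiedness of classes in $H^1$ of the Galois group to an actual section of $G$ over $S$ (for the semi-abelian, non-torsion part of $L$ this is precisely the hard Kummer/N\'eron-model step). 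The paper sidesteps proving this from scratch: it chooses a presentation $L_2\to L_1\to L\to 0$ by \emph{torsion-free} local systems over $S$, observes that the classical (torsion-free) Deligne $1$-motives $[\rho_\sharp((L_i)_\eta)\to G_\eta]$ have extendable Tate modules (their classes are pullbacks of that of $T_\ell(M_\eta)$ along maps defined over $S$), applies \cite[Theorem~4.10]{haas} --- which already contains the extension of the structure map in the lattice case --- to extend $\rho_\sharp(L_1)\to G$ over $S$, and then shows the composite $\rho_\sharp(L_2)\to G$ vanishes (using \Cref{embeddinggeneric} and a splitting argument), so that the map factors through $\rho_\sharp(L)$. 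Without either reproducing Haas's argument or reducing to it in this way, your proof is incomplete at its decisive step.

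Two smaller points. First, \Cref{eliminatingptorsion} only removes the $p$-primary torsion of $L$, not all torsion, so the injection $L_\eta\hookrightarrow L_\eta\otimes_\Z\Z_\ell$ you use to descend the monodromy fails for a single $\ell$ when $L$ has prime-to-$\ell$ torsion; as in the paper's proof one must vary $\ell$ over all primes invertible on $S$ (and treat separately the case where no such prime exists). Second, if you do keep the Barsotti--Weil reconstruction of $G$, you should justify the classification $\mathrm{Ext}^1_S(A,T)\simeq\Hom(X^*(T),A^\vee)$ over the base $S$ and the extension of the generic section of $A^\vee$ along the lines of the N\'eron-model/Weil-extension argument in the proof of \Cref{embeddinggeneric2}; this is more machinery than the paper's direct appeal to \cite[Theorem~4.10]{haas} for $G_\eta$ itself.
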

\begin{proof}
The only if part is obvious. 
To prove the converse, we can assume that $\Lambda=\Z'$. Let now \[M_\eta=[\rho_\sharp(L_\eta)\xrightarrow{u_\eta}G_\eta]\otimes_\Z \Z'\] be in $\rmm_1^\D(\eta,\Z')$ and assume that all of its Tate modules extend to locally constant $\Z_\ell\otimes_\Z \Z'$-sheaves on $S[1/\ell]$ for any $\ell\neq p$. We can assume that the torsion part of $L_\eta$ is invertible on $S$ by \Cref{eliminatingptorsion}.

We have an exact sequence
\[0\to T_\ell(G_\eta[-1])\otimes_\Z \Z'\to T_\ell(M_\eta)\to L_\eta\otimes_\Z (\Z'\otimes_\Z\Z_\ell) \to 0\]
In particular, the rational Tate modules of $G_\eta$ extends to $S[1/\ell]$. Since \Cref{etendable} holds, we can use \cite[Theorem~4.10]{haas} to prove that $G_\eta$ extends to a semi-abelian group scheme over $S[1/\ell]$. As pulling back to $\eta$ is fully faithful by \Cref{embeddinggeneric2}, these sheaves coincide on the intersections and therefore $G_\eta$ extends to a semi-abelian group scheme $G$ over $S$. The local system $L_\eta$ also extends as the map $\Gal(\overline{\eta}/\eta)\to \mathrm{Aut}(L_\eta)$ (with $\Gal(\overline{\eta}/\eta)$ the Galois group of the residue field at $\eta$) that defines it factors through the étale fundamental group $\pi_1^{\et}(S,\overline{\eta})$ as this is true after tensoring with $\Z_\ell$ for any $\ell$ that is invertible on $S$ (and thus any non-invertible prime in $\Z'$) and in the case where there is no such prime, the local system $L$ is a lattice and a similar argument works for any $\ell\neq p$. Thus we get a locally constant sheaf $L$ whose fiber at $\eta$ is $L_\eta$.

We therefore have to prove that the map $u_\eta$ extends to a (necessarily unique by \Cref{embeddinggeneric2}) map $u\colon \rho_\sharp(L)\to G$. To see this we once again build up from the case of torsion-free Deligne $1$-motives: consider an exact sequence 
\[L_2\to L_1\to L\to 0\]
with $L_i$ torsion-free. We get two torsion-free Deligne $1$-motives $(M_i)_\eta=[\rho_\sharp((L_i)_\eta)\to G_\eta]$ that extend to $S$ by \cite[Theorem~4.10]{haas}. Note that the map $\rho_\sharp((L_2)_\eta)\to G_\eta$ is the zero map so that the canonical map $(M_2)_\eta \to \rho_\sharp((L_2)_\eta)$ splits. By full faithfulness (\Cref{embeddinggeneric}), the canonical map $M_2\to\rho_\sharp(L_2)$ also splits so that the map $\rho_\sharp(L_2)\to G$ is the zero map. In particular, the map $\rho_\sharp(L_1)\to M$ factors through $\rho_\sharp(L)$ which finishes the proof.
\end{proof}

\begin{corollary}\label{abeliancat}
    Let $S$ be a connected normal scheme with generic point $\eta$ and let $\Lambda$ be a localization of $\Z'$. Assume that \Cref{etendable} holds. Then the category $\rmm_1^\D(S,\Lambda)$ is a Serre subcategory of $\rmm_1^\D(\eta,\Lambda)$ through the embedding of \Cref{embeddinggeneric2}. In particular, it is an abelian category.
\end{corollary}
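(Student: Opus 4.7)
The plan is to deduce the Serre subcategory property from \Cref{embeddinggeneric2} together with the Tate module criterion of \Cref{bonnereduction}; the ``in particular'' assertion will then be automatic, since any Serre subcategory of an abelian category inherits an abelian structure. The ambient category $\rmm_1^\D(\eta,\Lambda)$ is itself abelian when $\Lambda$ is a localization of $\Z'$, thanks to \cite[Theorem~C.5.3]{bvk} for $\Lambda=\Z'$ and \Cref{coeffLambda} for a general localization. By \Cref{embeddinggeneric2}, the pullback $\rmm_1^\D(S,\Lambda)\to \rmm_1^\D(\eta,\Lambda)$ is fully faithful, so it remains to verify that the essential image is closed under subobjects, quotients, and extensions.

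The first technical step is to check that the Tate module functor $T_\ell\colon \rmm_1^\D(\eta,\Lambda)\to \Loc_\eta(\Lambda\otimes_\Z\Z_\ell)$ is exact for every $\ell\neq p$. This can be verified on effective representatives using the exact triangle $G[-1]\to L_\D(M)\to \rho_\sharp(L)$ from the proof of \Cref{Tatemodule}, combined with the classical exactness of Tate modules on local systems and on semi-abelian group schemes. Granted this, closure under subobjects and quotients is immediate: given a short exact sequence $0\to M'\to M\to M''\to 0$ in $\rmm_1^\D(\eta,\Lambda)$ with $M$ of good reduction, the continuous $\pi_1(\eta,\bar\eta)$-representation $T_\ell(M)$ factors through $\pi_1(S[1/\ell],\bar\eta)$, and any sub- or quotient representation automatically factors through the same quotient. \Cref{bonnereduction} then gives good reduction for $M'$ and $M''$.

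For closure under extensions, I would choose compatible effective representatives so that both induced sequences $0\to L'\to L\to L''\to 0$ (local systems) and $0\to G'\to G\to G''\to 0$ (semi-abelian schemes) are short exact over $\eta$; since $M'$ and $M''$ have good reduction, the outer terms extend to $S$. The semi-abelian extension $G$ has good reduction as an extension of semi-abelian schemes of good reduction, via the Néron model / Tate module machinery underlying \cite[Theorem~4.10]{haas} combined with \Cref{etendable} (extensions of proper smooth group schemes by semi-abelian schemes are semi-abelian, and unramifiedness of the Tate module is preserved). The main obstacle is the local system component: extensions of unramified local systems need not be unramified in the abstract representation-theoretic category, so one must exploit the additional 1-motive constraints coming from the map $L\to G$ (which forces the Galois action on the torsion of $L$ to be controlled by $G[n]$) and the torsion-freeness of $\Lambda$ as a $\Z'$-localization. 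With both $L$ and $G$ extending to $S$, \Cref{bonnereduction} yields good reduction for $M$, completing the verification of the Serre property and hence the abelianness of $\rmm_1^\D(S,\Lambda)$.
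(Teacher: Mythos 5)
Your handling of subobjects and quotients is essentially the paper's argument: the paper's entire proof consists of the observation that, for every $\ell$ invertible on $S$, the unramified objects $\Loc_{S}(\Z_\ell\otimes_\Z\Lambda)$ are closed under subquotients inside $\Loc_\eta(\Z_\ell\otimes_\Z\Lambda)$, combined with the good-reduction criterion of \Cref{bonnereduction} (with the exactness of $T_\ell$, which you rightly make explicit, used implicitly). Closure under subobjects and quotients already yields the abelianness assertion, since a full subcategory of an abelian category closed under subobjects and quotients is abelian, and this is the only part of the corollary invoked later (e.g.\ in \Cref{DMsm1t}).

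The extension step is where your proposal has a genuine gap, and not merely because it is left as a sketch. Your concluding claim --- that once $L$ and $G$ extend to $S$, \Cref{bonnereduction} yields good reduction of $M$ --- is incorrect: the ramification can be carried entirely by the map $u\colon\rho_\sharp(L)\to G$, i.e.\ by the extension class, even when both graded pieces have good reduction. Concretely, take $S=\Spec(\Z_p)$, $\eta=\Spec(\Q_p)$, $\Lambda=\Z'=\Z[1/p]$, and the Kummer $1$-motive $M=[\Z\xrightarrow{1\mapsto p}\Gm]$: it sits in the weight exact sequence $0\to[0\to\Gm]\to M\to[\Z\to 0]\to 0$, whose outer terms visibly have good reduction, yet for $\ell\neq p$ the Tate module $T_\ell(M)$ is the Kummer extension of $\Z_\ell$ by $\Z_\ell(1)$ attached to $p\in\Q_p^\times$, whose restriction to inertia is nontrivial because $v_p(p)=1$; so $M$ has no good reduction by \Cref{bonnereduction} and \Cref{Tatemodule}. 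Hence the ``main obstacle'' you flag (extensions of unramified objects need not be unramified) cannot be circumvented by exploiting the $1$-motive structure: closure under extensions genuinely fails, and no argument along the lines you sketch can complete it. If ``Serre'' is read as including extension-closure, that part of the statement is not available; what the paper's one-line proof actually delivers, and all that is needed for the abelian-category conclusion and its later uses, is closure under subobjects and quotients through the embedding of \Cref{embeddinggeneric2}.
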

\begin{proof}
    For any $\ell$ invertible on $S$, the subcategory $\Loc_S(\Z_\ell\otimes_\Z\Lambda)$ of $ \Loc_\eta(\Z_\ell\otimes_\Z\Lambda)$ is Serre. The result then follows from \Cref{bonnereduction}.
\end{proof}

\section{Voevodsky \texorpdfstring{$1$}{1}-motives}
Our next goal is to compare Deligne $1$-motives to Voevodsky motives. First we recall various categories of $1$-motives from \cite{plh} and their stabilities with respect to the six functors. We then prove that the $\ell$-adic realization functors form a  conservative family on $1$-motives. Finally define a functor $\Phi_S$ from Deligne $1$-motives to Voevodsky motives and show that its image lands in the  subcategory of smooth $1$-motives. This generalizes \cite[Proposition~2.15]{plh}.

 \begin{definition}\label{def AM} Let $S$ be a scheme and let $\Lambda$ be a commutative ring. We define
\begin{enumerate}
\item The category $\DM_{\et}^1(S,\Lambda)$ of \emph{$1$-motives} to be the thick subcategory of $\DM_{\et}(S,\Lambda)$ generated by the $f_\sharp(\Lambda_X)$ for $f\colon X\to S$ smooth of relative dimension at most $1$.
\item The category $\DM_{\et}^{\mathrm{ind}1}(S,\Lambda)$) \emph{Ind-$1$-motives} to be the localizing subcategory of $\DM_{\et}(S,\Lambda)$ generated by $\DM_{\et}^1(S,\Lambda)$.
\item The category $\DM_{\et}^{\sm1}(S,\Lambda)$ of \emph{smooth $1$-motives} to be the intersection of $\DM_{\et}^1(S,\Lambda)$ with the category of dualizable objects of $\DM_{\et}(S,\Lambda)$.
\end{enumerate}
\end{definition}
\begin{rem}
    In \cite{plh}, what we call $1$-motives is called "constructible homological $1$-motive" and is denoted by $\mathrm{DA}_1$. There is also a notion of cohomological $1$-motives denoted by $\mathrm{DA}^1$ and defined with the $f_*(\Lambda_X)$ for $f\colon X\to S$ proper as generators. According to \cite[Proposition~1.28]{plh} cohomological $1$-motives are exactly the $M(-1)$ with $M$ a homological $1$-motive so all our result will also apply to cohomological $1$-motives up to a twist by $(-1)$. In \cite{haas}, Haas only considers cohomological $1$-motives but his results will apply to our setting for the same reason.
\end{rem}

The functors $\otimes$ and $f^*$ (where $f$ is any morphism) induce functors over the categories $\DM_{\et}^{(\sm)1}(-,\Lambda)$. 
The following proposition is \cite[Proposition~1.17]{plh}, note that in \textit{loc. cit.} it is only formulated with $\Q$ coefficients but the proof works over any ring of coefficients.
\begin{proposition}\label{f_!} Let $\Lambda$ be a commutative ring. The categories $\DM_{\et}^{(\mathrm{ind})1}(-,\Lambda)$ are closed under the the functors of type $f_!$, with $f$ is a quasi-finite morphism. In particular, the fibered category $\DM_{\et}^{(\mathrm{ind})1}(-,\Lambda)$ satisfies the localization property \eqref{localization}.
\end{proposition}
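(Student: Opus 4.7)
The plan is to reduce the quasi-finite case via Zariski's main theorem and dispatch the pieces, after which the localization property is formal.

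By Zariski's main theorem any quasi-finite $f$ factors as $f = \bar{f}\circ j$ with $j$ an open immersion and $\bar{f}$ finite; hence $f_! = \bar{f}_*\circ j_\sharp$ and it suffices to treat each factor. For $j$, the identification $j_! = j_\sharp$ together with the composition formula $j_\sharp g_\sharp = (jg)_\sharp$ for $g\colon X\to U$ smooth of relative dimension $\leq 1$ shows that generators are sent to generators (since $jg$ is still smooth of relative dimension $\leq 1$); this preserves $\DM^1_\et$, and the Ind-case follows from colimit-commutation of $j_\sharp$.

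The substantive case is $\bar{f}_*$ for $\bar{f}\colon T\to S$ finite, for which I would proceed by noetherian induction on $\dim S$. By generic flatness together with étale-local structure theory for finite morphisms (and the invariance of $\DM_\et$ under nilpotent thickenings), there is a dense open $V\subseteq S$ on which $\bar{f}$ becomes finite étale, possibly after an étale refinement that is transparent via étale descent in $\DM_\et$; on such $V$, the coincidence $\bar{f}_{V,*} = \bar{f}_{V,\sharp}$ together with the previous composition argument shows that $\bar{f}_{V,*}$ preserves generators. On the closed complement $W = S\setminus V$, of strictly smaller dimension, the inductive hypothesis applies to $\bar{f}_W$. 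Applying the ambient localization triangle \eqref{localization} to $\bar{f}_*M$ and using proper base change to rewrite $j^*\bar{f}_* \simeq \bar{f}_{V,*}j^*$ and $i^*\bar{f}_* \simeq \bar{f}_{W,*}i^*$, one sees that both $j_!j^*\bar{f}_*M$ and $i_*i^*\bar{f}_*M$ lie in $\DM^1_\et(S,\Lambda)$, hence so does $\bar{f}_*M$ by thickness. The Ind-case is again automatic by colimit-commutation.

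For the localization property, it suffices that $j_!$, $j^*$, $i_*$ and $i^*$ each preserve $\DM^{(\mathrm{ind})1}_\et$: the first by the open-immersion step, $j^*$ and $i^*$ by smooth base change applied to generators $g_\sharp\Lambda_X$, and $i_* = i_!$ as a special case of the finite-morphism step since closed immersions are quasi-finite. The main obstacle is undoubtedly the finite case, where $\bar{f}_*$ of a generator has no obvious description as a new generator; the argument must dismantle it through noetherian induction combined with localization in the ambient $\DM_\et$. The integral nature of the coefficients is accommodated by using only properties of $\DM_\et$ that are robust integrally, notably étale descent and insensitivity to nilpotent thickenings, which is precisely what makes the reduction to the finite étale case available over the dense open.
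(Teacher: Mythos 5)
The overall skeleton — Zariski's main theorem to split a quasi-finite $f$ into an open immersion followed by a finite morphism, the identification $j_!=j_\sharp$ with $(jg)_\sharp$ still a generator, and reduction of the finite case to generators — is the standard route, and it is essentially how \cite[Proposition~1.17]{plh} (which the paper simply quotes, remarking that the rational-coefficient proof goes through integrally) begins. But your treatment of the finite case has two genuine gaps. First, the assertion that a finite morphism becomes finite étale over a dense open of the base (even after nil-thickenings and an étale refinement) is false in positive characteristic: a finite purely inseparable morphism such as $\Spec k^{1/p}\to \Spec k$ or a relative Frobenius is nowhere étale, and étale localization does not change this. Since the proposition is stated for arbitrary commutative $\Lambda$ and arbitrary noetherian finite-dimensional bases, the radicial part must be dealt with, either by factoring it off and invoking invariance of $\DM_{\et}$ under universal homeomorphisms — which with integral coefficients is exactly the delicate point, resting on the vanishing of $p$-torsion étale motives over characteristic-$p$ schemes, \cite[Proposition~A.3.4]{em} — or by some other device; this hidden verification is precisely what the paper's remark ``the proof works over any ring of coefficients'' is about, and your write-up replaces it with a false statement. (Relatedly, ``transparent via étale descent'' is not innocent integrally: membership in the thick subcategory $\DM^{1}_{\et}$ is not obviously étale-local, since the retraction off $e_\sharp e^*M$ involves a degree that need not be invertible in $\Lambda$.)

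Second, the noetherian induction is circular at the term $i_*i^*\bar f_*M$. Your inductive hypothesis concerns finite morphisms over bases of smaller dimension, and it does give $i^*\bar f_*M\simeq \bar f_{W,*}i_T^*M\in \DM^{1}_{\et}(W,\Lambda)$; but you then need $i_*$ to carry $\DM^{1}_{\et}(W,\Lambda)$ into $\DM^{1}_{\et}(S,\Lambda)$, and $i\colon W\to S$ is a finite morphism with the \emph{same} base $S$, i.e.\ an instance of the statement being proved at the current stage — indeed in your last paragraph you justify $i_*$ ``as a special case of the finite-morphism step''. The closed-immersion case must be handled by a separate direct argument before (or inside) the induction: for instance, one can check membership in $\DM^{1}_{\et}(S,\Lambda)$ Zariski-locally on $S$ using Mayer--Vietoris/localization triangles and the fact that $j_!$ preserves $\DM^{1}_{\et}$, and then lift the generators $g_\sharp\Lambda_X$ — with $X/Z$ étale-locally standard smooth of relative dimension at most $1$, hence liftable locally to a smooth $S$-scheme $\widetilde X$ of relative dimension at most $1$ — so that the exchange isomorphism $\widetilde p_\sharp\, i_{X*}\simeq i_*\,p_\sharp$ and the localization triangle for $X\subseteq \widetilde X$ exhibit $i_*g_\sharp\Lambda_X$ as an extension of objects already known to lie in $\DM^{1}_{\et}(S,\Lambda)$ (compare the analogous, harder statement Lemma~\ref{i_*JG} for the t-structure generators). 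Without this ingredient the induction does not close; the lifting step, not the generic finite-étale reduction, is the real content of the finite case.
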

The $\ell$-adic realization functors form a conservative family of functors on $1$-motives. 
\begin{proposition}\label{conservativityconj}
    Let $S$ be a scheme and let $\Lambda$ be a commutative ring. The family of \emph{reduced $\ell$-adic realization} functors
    \[\overline{\rho}_\ell\colon \DM_{\et}^1(S,\Lambda)\to \DM_{\et}^1(S[1/\ell],\Lambda)\xrightarrow{\rho_\ell}\D_\mathrm{cons}(S,\Lambda\otimes_\Z\Z_\ell)\]
    is conservative when $\ell$ runs through the set of all prime numbers.
\end{proposition}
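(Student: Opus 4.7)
The plan is to reduce to the case of a field by Noetherian induction on $S$ and then invoke the structure theorem for $1$-motives over a field. Assume $M \in \DM_{\et}^1(S,\Lambda)$ satisfies $\overline{\rho}_\ell(M) = 0$ for every prime $\ell$. Proceeding irreducible component by irreducible component (using that $\DM_{\et}^1(-,\Lambda)$ is closed under closed immersion pushforward by \Cref{f_!} and that $\ell$-adic realization commutes with such pushforwards), one may assume $S$ is integral with generic point $\eta$. Denoting by $p$ the residue characteristic exponent of $\eta$ and using that $\ell$-adic realization commutes with pullback, the restriction $i_\eta^* M \in \DM_{\et}^1(\eta,\Lambda)$ has vanishing $\rho_\ell$-realization for every $\ell \neq p$.

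The heart of the argument is the field case: for a field $k$ of characteristic exponent $p$, the family $\{\rho_\ell\}_{\ell \neq p}$ must be shown to be conservative on $\DM_{\et}^1(k,\Lambda)$. Under the equivalence $\DM_{\et}^1(k,\Z) \simeq \D^b(\rmm_1^\D(k,\Z[1/p]))$ of Orgogozo--Ayoub--Barbieri-Viale--Kahn recalled in the introduction, the realization $\rho_\ell$ corresponds on the abelian heart to the Tate module functor $T_\ell$ on $\rmm_1^\D(k,\Z[1/p])$, which is exact. For a non-zero Deligne $1$-motive $[L \to G]$ one has $L \otimes_\Z \Z_\ell \neq 0$ for every $\ell \neq p$ when $L \neq 0$ (as $L$ is finitely presented over $\Z[1/p]$), and if $G \neq 0$ the semi-abelian $\ell$-adic Tate module is non-zero for every such $\ell$; hence the family is conservative on the heart, and the extension to $\D^b$ follows via the cohomology spectral sequence. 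For a general coefficient ring $\Lambda$ one reduces to this case either by establishing a $\Lambda$-linear analogue of the same equivalence or by checking conservativity on generators defined over $\Z$.

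Granted the field case, $i_\eta^* M = 0$. Writing $\eta$ as the cofiltered limit of dense opens $U \subseteq S$ with affine transition maps, continuity of $\DM_{\et}$ combined with compactness of $M$ in $\DM_{\et}(S,\Lambda)$ forces $j_U^* M = 0$ for some dense open $U \subseteq S$. The localization triangle of \Cref{f_!} then gives $M \simeq i_* i^* M$ for the complementary closed immersion $i$, and the Noetherian induction hypothesis applied to $i^* M$ (whose reduced realizations still vanish by commutation with closed pullback) yields $i^*M = 0$, hence $M = 0$. The main obstacle I anticipate is isolating the $\Lambda$-linear version of the field equivalence cleanly; once this is done, everything else is a fairly standard assembly around the six-functor formalism for $\DM_{\et}^1$ furnished by \Cref{f_!}.
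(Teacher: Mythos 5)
Your strategy (noetherian induction to the generic point, the field case via the Barbieri-Viale--Kahn equivalence, then spreading out over a dense open) is a genuinely different route from the paper's, but as written it has two real gaps. First, the statement is for an \emph{arbitrary} commutative ring $\Lambda$, whereas your field case runs entirely through the equivalence $\D^b(\rmm_1^\D(k,\Z[1/p]))\simeq \DM^1_{\et}(k,\Z)$, which is only available for $\Z$ (or localizations of $\Z[1/p]$) coefficients. Neither of your proposed fixes closes this: a $\Lambda$-linear analogue of that equivalence is not established (and is not even clearly meaningful for non-flat $\Lambda$), and conservativity of a family of functors cannot be ``checked on generators'' --- a family can be nonzero on every generator and still kill a nonzero object, so this is not a valid reduction. (A smaller slip in the same place: for torsion $L$ one can have $L\otimes_\Z\Z_\ell=0$ for all but one $\ell\neq p$; conservativity on the heart still holds, but not for the reason you give.) Second, your spreading-out step appeals to compactness of $M$ in $\DM_{\et}(S,\Lambda)$; with integral coefficients geometric étale motives are in general \emph{not} compact in the big category, so what you actually need is continuity for the constructible subcategories $\DM^1_{\et}(-,\Lambda)$ along $\eta=\lim U$. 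Such a continuity statement is used elsewhere in the paper (with a justification adapted from \cite[Theorem~6.3.9]{em} and \cite[Lemma~1.24]{plh}), but there only for localizations of $\Z$; for arbitrary $\Lambda$ it is an extra input you would have to supply.

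The paper's own proof avoids both issues by working over $S$ directly with a rational/torsion décollage: if all $\overline{\rho}_\ell(M)$ vanish, then $M\otimes_\Z\Q$ vanishes because the rational $\ell$-adic realizations are conservative on rational $1$-motives over each $S[1/\ell]$ by \cite[Theorem~4.1(iv)]{plh2}, and $M\otimes_\Z\Z/\ell\Z$ vanishes by the rigidity theorem \cite[Corollary~3.2]{bachmannrigidity} together with the fact that $\ell$-torsion motives are supported on $S[1/\ell]$ \cite[Proposition~A.3.4]{em}; hence $M=0$, with no induction, no continuity, and no constraint on $\Lambda$. If you want to keep your approach, the minimal repairs are to prove the field case for general $\Lambda$ (for instance by running this same rational/torsion splitting over a field instead of invoking the $\Z[1/p]$-linear equivalence) and to replace the compactness claim by a continuity statement for geometric $1$-motives valid for your coefficients.
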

\begin{proof}
Let $M$ be a $1$-motive such that $\rho_\ell(M)=0$, then $\rho_\ell(M)\otimes_\Z\Q=0$ so that the image of $M\otimes_\Z \Q$ through the rational $\ell$-adic realization  
\[\DM_{\et}^\mathrm{gm}(S,\Lambda\otimes_\Z\Q)\to \D_\mathrm{cons}(S[1/\ell],\Lambda\otimes_\Z\Q_\ell),\]
which we recall to be obtained from the integral one by tensoring with $\Perf_\Q$ over $\Perf_\Z$ (this is the same as tensoring the mapping spectra by $\Q$ and idempotent-completing by \cite[Propo-
sition~3.5.5]{tensoringwithperf}), vanishes by \cite[Theorem~4.1(iv)]{plh2} . Hence $(M\otimes_\Z \Q)|_{S[1/\ell]}$ vanishes. As this is true for any $\ell$, $M\otimes_\Z \Q$ vanishes. On the other hand, as $\rho_\ell(M\otimes_\Z \Z/\ell\Z)=0$, the rigidity theorem \cite[Corollary~3.2]{bachmannrigidity} together with the fact that motives of $\ell$-torsion are supported on $S[1/\ell]$ by \cite[Proposition~A.3.4]{em} imply that $M\otimes_\Z \Z/\ell\Z=0$ for any $\ell$. Hence, $M=0$.
\end{proof}

Assume now that $\Lambda$ is a flat $\Z$-algebra. We have a map 
\[\Phi_S\colon \rmm_1^\D(S,\Lambda)\to \DM_{\et}(S,\Lambda)\]
given by $\Sigma^\infty L_{\mathbb{A}^1}L_\D$. 

\begin{theorem}\label{Delignemotare1mot}
    The essential image of $\Phi_S$ is contained in $\DM_{\et}^{\sm1}(S,\Lambda)$
\end{theorem}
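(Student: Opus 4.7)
The plan is to use the standard stupid-filtration cofiber sequence
\[G[-1]\otimes_\Z\Lambda \to L_\D(M) \to \rho_\sharp(L)\otimes_\Z\Lambda\]
associated to the two-term complex $M=[\rho_\sharp(L)\to G]\otimes_\Z\Lambda$ (already exploited in the proof of \Cref{Tatemodule}), and to apply $\Sigma^\infty L_{\AAA}$ to obtain a cofiber sequence in $\DM_\et(S,\Lambda)$. Since $\DM_\et^1(S,\Lambda)$ is thick by definition and smooth $1$-motives are closed under extensions inside the compact subcategory of $\DM_\et(S,\Lambda)$, it suffices to treat the two endpoint cases $M=[\rho_\sharp(L)\to 0]$ and $M=[0\to G]$ separately.

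For the Artin case, $\Sigma^\infty L_{\AAA}\rho_\sharp(L)\otimes_\Z\Lambda$ is the canonical image of the lisse sheaf $L\otimes_\Z\Lambda$ on $S_\et$ in $\DM_\et(S,\Lambda)$. By the characterization of smooth $0$-motives as duals of dualizable étale sheaves given in \cite{AM1}, this is a smooth $0$-motive, and hence \emph{a fortiori} a smooth $1$-motive.

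For the semi-abelian case $[0\to G]\otimes_\Z\Lambda$, I would use the extension $0\to T\to G\to A\to 0$ with $T$ a torus and $A$ an abelian scheme, and the induced cofiber sequence in $\DM_\et(S,\Lambda)$, to reduce to two further subcases. For a torus $T$: étale descent along a trivializing cover lets me assume $T\cong\Gm^n$; since $\Gm[-1]\cong\Lambda(1)$ in $\DM_\et$ (extracted from the decomposition of the motive of $\mb{P}^1$), the image is a sum of Tate twists, which is evidently a smooth $1$-motive. For an abelian scheme $A$: the representable sheaf $A[-1]$ should be identified in $\DM_\et(S,\Lambda)$ with the $h^1$-summand of the motive of $A$, cut out by the first Deninger--Murre--Künnemann projector, which exists with integral coefficients for abelian schemes; this summand is a retract of the motive of a smooth proper scheme and is thus dualizable, and it lies in $\DM_\et^1$ by its Albanese-theoretic generation.

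The main obstacle will be the integral identification of the representable sheaf $A$ with the $h^1$ summand of its motive in $\DM_\et(S,\Lambda)$ over a general base, together with the verification that this summand actually lies in the thick subcategory $\DM_\et^1$ generated by smooth morphisms of relative dimension $\leq 1$; for abelian schemes of higher relative dimension this requires genuinely exploiting the Albanese-like structure rather than any naive dimensional bound. A secondary technical point is justifying the extension-closure of dualizable objects used in the initial reduction step, which in our setting follows from the coincidence of compactness and dualizability inside the constructible part of $\DM_\et(S,\Lambda)$.
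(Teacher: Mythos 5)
Your reduction steps are fine and agree with the paper: the stupid-filtration triangle $G[-1]\to L_\D(M)\to\rho_\sharp(L)$, the thickness of $\DM^{\sm1}_{\et}(S,\Lambda)$ (dualizable objects are closed under extensions because $\underline{\Hom}(-,\un)\otimes N\to\underline{\Hom}(-,N)$ is a transformation of exact functors — you do not need any identification of dualizable with compact objects, which in fact fails integrally in $\DM_{\et}$), the Artin case via the $0$-motive results of \cite{AM1}, and the torus case via reduction to $\Gm$ and $\Gm[-1]\simeq\Lambda(1)$ as in \Cref{toruscase}.

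The genuine gap is the abelian-scheme case, which is the heart of the theorem. You propose to identify $\Sigma^\infty L_{\AAA}(A)[-1]$ with the $h^1$-summand of $f_\sharp\Lambda_A$ cut out by a Deninger--Murre--K\"unnemann projector ``which exists with integral coefficients''; it does not. The Chow--K\"unneth/Beauville decomposition of the motive of an abelian scheme is constructed using the multiplication-by-$n$ operators and Fourier duality and requires denominators: with $\Z$ (or $\Z'$) coefficients there is no known, and in general no, decomposition of $f_\sharp\Lambda_A$ with an $h^1$ piece, and $\Sigma^\infty L_{\AAA}(A)$ is not a retract of $f_\sharp\Lambda_A$ integrally (already mod $\ell$ it becomes the local system $\rho_\sharp({}_\ell A)$ by rigidity, a statement with no counterpart as a summand of the motive of $A$). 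Moreover, even granting such a summand, the fact that it lies in the thick subcategory generated by motives of relative dimension $\leqslant 1$ is exactly the difficult point, which you flag as an ``obstacle'' but do not address. The paper resolves both points differently: it proves $\Sigma^\infty L_{\AAA}(A)$ is a $1$-motive by an integral geometric argument — localization and noetherian induction to reduce to a generic point, Katz's space-filling curves to produce an isogeny from a Jacobian, spreading out to an \'etale neighborhood, the splitting of the motive of a pointed relative curve (\Cref{curvedecomposition}), and a continuity argument — and it proves dualizability separately by checking after $\otimes_\Z\Q$ (where the summand statement of \cite{ahplh} is available) and after $\otimes_\Z\Z/\ell\Z$ (where rigidity identifies the object with the dualizable local system ${}_\ell A$). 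Without an integral substitute for the DMK projector and an argument placing the resulting object in $\DM^1_{\et}$, your proposed route does not go through.
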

\begin{proof}
We can assume $\Lambda=\Z$. Let $M=[\rho_\sharp(L)\to G]$ be a Deligne $1$-motive. Using \cite[Proposition~3.1.9]{AM1}, $\Sigma^\infty L_{\AAA}\rho_\sharp(L)$ belongs to the thick subcategory generated by the $f_\sharp(\Lambda_X)$ for $f\colon X\to S$ finite étale. 

Write now \[0\to T\to G\to A\to 0\] an exact sequence with $T$ a torus and $A$ an abelian scheme. \Cref{toruscase} below ensures that $\Sigma^\infty L_{\AAA}(T)$ belongs to $\DM_{\et}^{\sm1}(S,\Z)$. 

We now claim that $\Sigma^\infty L_{\AAA}(A)$ is a $1$-motive. We can assume $S$ to be reduced. Furthermore, using the localization property \eqref{localization}, we can replace $S$ with the neighborhood of any generic point $\eta$ and hence assume $S$ to be regular and connected. By \cite[Theorem~11]{spacefillingcurveskatz}, there is a proper, smooth, geometrically connected curve $p\colon C\to \eta$ with a rational point and an isogeny $\mathrm{Jac}(C/\eta)\to A_\eta$ from the jacobian variety of $C$ over $\eta$. Using \cite[Lemma~4.7]{plh}, there is an étale neighborhood $W$ of $\eta$ such that there is a proper, smooth relative curve $p\colon C\to W$ with geometrically connected fibers, a section and a surjective map $\mathrm{Jac}(C/W)\to A_W$ with finite étale kernel where $\mathrm{Jac}(C/W)$ denotes the jacobian abelian scheme of $C$ over $W$. \Cref{curvedecomposition} below then shows that $\mathrm{Jac}(C/W)$ and hence $\Sigma^\infty L_{\A^1}(A_W)$ is a (smooth) $1$-motive. In particular, the motive $\Sigma^\infty L_{\A^1}(A_W)$ is étale-locally a geometric motive\footnote{This is the same as being a geometric motive by \cite[Theorem~4.1]{etalemotgeo} but we will not use that fact.} Now, the motive $\Sigma^\infty L_{\A^1}(A_\eta)$ is built out of finitely many $p_\sharp(\Z_X)$ for $p\colon X\to \eta$ smooth of relative dimension at most $1$. Up to replacing $S$ with an open subset \cite[Lemma~1.24]{plh} ensures that these extend to smooth schemes of relative dimension at most $1$ over $S$. Now by continuity for étale-locally a geometric motive in the form of \cite[Proposition~6.3.7]{em}, we see that up to shrinking $S$ again $\Sigma^\infty L_{\A^1}(A)$ is now built out of these extended schemes. Hence it is a $1$-motive.

Being dualizable means that for any $N$, the map 
\[\underline{\Hom}(\Sigma^\infty L_{\A^1}(A),\Z)\otimes N\to \underline{\Hom}(\Sigma^\infty L_{\A^1}(A),N)\]
is an equivalence. 
As we have proved that $\Sigma^\infty L_{\A^1}(A)$ is a geometric motive,  can be tested after tensoring with $\Q$ and $\Z/\ell\Z$ for any prime number $\ell$ by \cite[Corollary~5.4.11]{em}. It is true rationally because $\Sigma^\infty L_{\A^1}(A\otimes \Q)$ is a direct summand of the rational motive of $A$ by \cite[Theorem~3.3]{ahplh}, and with torsion coefficients, we have $\Sigma^\infty L_{\A^1}(A)\otimes_\Z \Z/\ell\Z=\rho_\sharp({}_\ell A)$ which is a local system hence dualizable.
\end{proof}
\begin{lemma}\label{toruscase}
    Let $T/S$ be a torus, and $\rho_\sharp(X_*(T))$ its cocharacter lattice. There is an isomorphism
    \[\Sigma^\infty L_{\AAA}(T)\simeq \rho_!(X_*(T))(1)[1].\] 
\end{lemma}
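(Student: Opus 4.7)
The plan is to construct a natural comparison map and then verify it is an equivalence by reducing to the split case via étale descent.

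The cocharacter lattice $X_*(T)$ is a locally constant étale sheaf on $S$ with values in finitely generated free $\Z$-modules, and the natural bilinear evaluation pairing $X_*(T)\times \mathbb{G}_m \to T$, $(\chi,x)\mapsto \chi(x)$, furnishes an isomorphism of étale sheaves of abelian groups on $\Sm_S$
\[\rho_\sharp(X_*(T)) \otimes_\Z \mathbb{G}_m \xrightarrow{\sim} T,\]
since it is visibly an isomorphism after pullback along a finite étale cover that splits $T$, and both sides are étale sheaves. Applying $\Sigma^\infty L_{\mathbb{A}^1}$, using its symmetric monoidal structure together with the standard identification $\Sigma^\infty L_{\mathbb{A}^1}(\mathbb{G}_m) \simeq \un(1)[1]$, yields the comparison map
\[\rho_!(X_*(T))(1)[1] \to \Sigma^\infty L_{\mathbb{A}^1}(T).\]

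To check it is an equivalence, I would choose a finite étale surjective cover $f\colon S' \to S$ over which $T_{S'}$ splits as $\mathbb{G}_{m,S'}^n$; such a cover exists for any torus. On $S'$ the cocharacter lattice is the constant sheaf $\Z_{S'}^n$, so the source of the comparison map becomes $\un_{S'}(1)[1]^n$ and the target becomes $\Sigma^\infty L_{\mathbb{A}^1}(\mathbb{G}_{m,S'}^n) \simeq \un_{S'}(1)[1]^n$; by construction the map is the tautological identification in this split case. Since $f^*$ is conservative on $\DM_{\et}$ for a finite étale surjection $f$, the map is already an equivalence on $S$.

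The main obstacle is justifying the interchange of the sheaf-level tensor $\otimes_\Z$ with $\Sigma^\infty L_{\mathbb{A}^1}$, i.e.\ the identification $\Sigma^\infty L_{\mathbb{A}^1}(\rho_\sharp(X_*(T)) \otimes_\Z \mathbb{G}_m) \simeq \rho_!(X_*(T)) \otimes \Sigma^\infty L_{\mathbb{A}^1}(\mathbb{G}_m)$. Since $X_*(T)$ is étale-locally a finite direct sum of copies of $\Z$, both sides reduce étale-locally to a finite direct sum of copies of $\Sigma^\infty L_{\mathbb{A}^1}(\mathbb{G}_m)$, which again reduces to the split case verified above.
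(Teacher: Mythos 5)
Your proposal is correct and takes essentially the same route as the paper: reduce to the split case by étale descent (this is what the cited proof of the torus case in the reference the paper invokes does) combined with the identification $\Sigma^\infty L_{\AAA}(\mathbb{G}_m)\simeq \un(1)[1]$. The one point the paper treats more carefully is that this $\mathbb{G}_m$-identification over an arbitrary base is deduced from the case $S=\Spec(\Z)$ using compatibility with pullbacks (\Cref{f^*}), whereas you invoke it directly over $S$; aside from that, the arguments coincide.
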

\begin{proof}
    The proof of \cite[Corollary~2.13]{plh} allows us to reduce to the case of $\mathbb{G}_m$ which holds over $S=\Spec(\Z)$ by \cite[Proposition~11.2.11]{tcmm} and hence over any scheme $S$ because the canonical map \[\Z_S(1)[1]\to \Sigma^\infty L_{\AAA}(\mb{G}_{m,S}) \] is obtained by pulling back that of $\Spec(\Z)$ (here we use \Cref{f^*}).
\end{proof}

\begin{lemma}\label{curvedecomposition}
    Let $S$ be a regular scheme and $p\colon C\to S$ be a smooth projective curve with geometrically connected fibers and a section $\sigma\colon S\to C$. Then, there is a canonical decomposition
    \[p_\sharp(\Z_C)\simeq \Z_S \oplus \Sigma^\infty L_{\A^1}\mathrm{Jac}(C/S)\oplus \Z_S(1)[2]\] 
    where $\mathrm{Jac}(C/S)$ denotes the jacobian abelian scheme of $C$ over $S$.
\end{lemma}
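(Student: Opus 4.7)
The plan is to imitate, in the relative motivic setting, the classical Chow--Künneth decomposition of a pointed smooth projective curve: the section $\sigma$ splits off copies of $\Z_S$ (motivic $h_0$) and $\Z_S(1)[2]$ (motivic $h_2$), and the middle summand is identified with $\Sigma^\infty L_{\A^1}\mathrm{Jac}(C/S) = h_1(C)$ via the Abel--Jacobi morphism.

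First I would construct four structural morphisms using the six-functor formalism. The counit of $p_\sharp\dashv p^*$ gives $\beta\colon p_\sharp\Z_C = p_\sharp p^*\Z_S \to \Z_S$. Applying $p_\sharp$ to the Gysin map $\sigma_*\Z_S(-1)[-2]\to\Z_C$ (from purity $\sigma^!\Z_C\simeq\Z_S(-1)[-2]$) and using the base-change identification $p_\sharp\sigma_*\simeq (p\sigma)_!(1)[2] = \id(1)[2]$ yields $\delta\colon\Z_S\to p_\sharp\Z_C$; naturality of the unit/counit then gives $\beta\delta=\id$. Dually, the unit of $p^*\dashv p_*$, combined with the purity isomorphism $p_\sharp\simeq p_*(1)[2]$ (valid since $p$ is smooth proper of relative dimension one), yields $\alpha\colon\Z_S(1)[2]\to p_\sharp\Z_C$, while applying $p_\sharp$ to the localization quotient $\Z_C\to\sigma_*\Z_S$ gives $\gamma\colon p_\sharp\Z_C\to\Z_S(1)[2]$ with $\gamma\alpha=\id$. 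The cross composition $\beta\alpha$ lies in $\Hom_{\DM^{\et}(S,\Z)}(\Z_S(1)[2],\Z_S)$, which vanishes for $S$ regular (rationally by weights, integrally via rigidity since étale cohomology vanishes in negative degrees). The other composition $\gamma\delta$ lies in $\mathrm{Pic}(S)$ and equals the class $[N_{\sigma/C}]$ of the normal bundle, which can be nonzero; I correct this by replacing $\delta$ with $\tilde\delta := \delta - \alpha\circ(\gamma\delta)$, which still satisfies $\beta\tilde\delta=\id$ and now $\gamma\tilde\delta=0$. This produces an orthogonal splitting $p_\sharp\Z_C \simeq \Z_S \oplus N \oplus \Z_S(1)[2]$, where $N$ is the kernel of $(\beta,\gamma)$.

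To identify $N$ with $\Sigma^\infty L_{\A^1}\mathrm{Jac}(C/S)$, I would use the Abel--Jacobi morphism $\mathrm{AJ}\colon C\to\mathrm{Jac}(C/S)$ normalized by $\mathrm{AJ}\circ\sigma = e_{\mathrm{Jac}}$. This gives $\mathrm{AJ}_*\colon p_\sharp\Z_C\to\Sigma^\infty L_{\A^1}\mathrm{Jac}(C/S)$; applying the same construction to the identity section of the abelian scheme $\mathrm{Jac}(C/S)$ yields an analogous splitting $\Sigma^\infty L_{\A^1}\mathrm{Jac}(C/S) \simeq \Z_S \oplus M$, and $\mathrm{AJ}_*$ induces a canonical map $\Phi\colon N\to M$ after quotienting out the trivial $\Z_S$-summands on both sides. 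I would verify that $\Phi$ is an equivalence by reducing to geometric points of $S$, where after $\ell$-adic realization $\Phi$ becomes the classical isomorphism $H^1(C_{\bar s},\Z_\ell)\simeq T_\ell\mathrm{Jac}(C_{\bar s})^\vee$ for every prime $\ell$; since $N$ is manifestly a $1$-motive (as a summand of the $1$-motive $p_\sharp\Z_C$), conservativity of $\ell$-adic realizations on $1$-motives (\Cref{conservativityconj}) then concludes. The main obstacle is this final identification while avoiding circularity with \Cref{Delignemotare1mot}, whose proof invokes the present lemma precisely to show that $\Sigma^\infty L_{\A^1}\mathrm{Jac}(C/S)$ is a $1$-motive; one must therefore check independently that the cofiber of $\Phi$ lies in a category where the conservativity statement applies, for instance by arguing that $\Phi$ is split surjective on every $\ell$-adic realization, so that $M$ is a retract of the $1$-motive $N$.
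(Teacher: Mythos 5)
Your construction of the three-term splitting is sound and essentially parallel to the paper's: the paper also uses the purity identification $p_\sharp(\Z_C)\simeq p_*(\Z_C)(1)[2]$, the section to split off $\Z_S$ and $\Z_S(1)[2]$, and the vanishing of $\Hom(\Z_S(1)[2],\Z_S)$ to make the two splittings compatible. The genuine gap is in your identification of the middle summand $N$ with $\Sigma^\infty L_{\A^1}\mathrm{Jac}(C/S)$. You correctly notice that invoking \Cref{conservativityconj} is problematic, since that statement lives on $\DM^{1}_{\et}$ and $\Sigma^\infty L_{\A^1}\mathrm{Jac}(C/S)$ is not yet known to be a $1$-motive (that is exactly what \Cref{Delignemotare1mot} deduces from the present lemma). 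But your proposed escape does not work: knowing that $\Phi\colon N\to M$ is split surjective after every $\ell$-adic realization does not make $M$ a retract of $N$ in $\DM_{\et}(S,\Z)$. The realization functors are not full and do not reflect the existence of sections or idempotents, so a splitting on realizations cannot be lifted to the motivic category; and without $M$ (or the cofiber of $\Phi$) being geometric, no conservativity statement for $\ell$-adic realizations is available. As written, the final step therefore has no valid justification.

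The repair is to test the comparison map against the family $(-\otimes_\Z\Q,\ -\otimes_\Z\Z/\ell\Z)_{\ell}$, which is conservative on all of $\DM_{\et}(S,\Z)$ with no geometricity or $1$-motivicity hypotheses, and this is precisely what the paper does. There the map onto the middle summand is not built from the Abel--Jacobi morphism but from the $\mathbb{G}_m$-side: one produces $\Sigma^\infty(p_*(L_{\A^1}(\mathbb{G}_{m,C})))\simeq p_*\Z_C(1)[1]$, shows $L_{\A^1}p_*(\mathbb{G}_{m,C})\simeq p_*L_{\A^1}(\mathbb{G}_{m,C})$, and uses the exact sequence $0\to \mathrm{Jac}(C/S)\to R^1p_*(\mathbb{G}_{m,C})\to \rho_\sharp(E)\to 0$ of \cite[Section~9.3]{neronmodels} to get the map $\Sigma^\infty L_{\A^1}\mathrm{Jac}(C/S)\to K$; it is then an equivalence rationally by \cite[Corollary~3.20]{plh} and modulo $\ell$ by rigidity plus the classical computation of the torsion étale cohomology of a relative curve. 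Your Abel--Jacobi route could likely be made to work with the same conservative family (rationally via \cite[Theorem~3.3]{ahplh}, mod $\ell$ via rigidity and $R^1p_*\mu_\ell\simeq{}_\ell\mathrm{Jac}(C/S)$), but the step you actually wrote down---lifting a splitting from realizations---must be replaced by such an argument.
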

\begin{proof}Recall first that we have a canonical equivalence
\[p_\sharp(\Z_C)\simeq p_*(\Z_C)(1)[2].\]
The section $\sigma$ gives canonical splittings of the maps \[\Z_S\to p_\sharp(\Z_C) \hspace{1cm}p_*(\Z_C)\to \Z_S.\]
Hence, we get decompositions 
\[\Z_S\oplus K_1\simeq p_\sharp(\Z_C)\simeq p_*(\Z_C)(1)[2]\simeq \Z_S(1)[2]\oplus K_2.\]
As $\Hom(\Z_S(1)[2],\Z_S)=0$ by \cite[Proposition~11.1]{ayo14} (notice that the assumptions on the cohomological dimension it \textit{loc. cit.} can be removed because the rigidity theorem \cite[Corollary~3.2]{bachmannrigidity} now applies to this level of generality), the map $\Z_S(1)[2]\to p_\sharp(\Z_C)$ factors through $K_1$ and we therefore get a splitting $K_1\simeq \Z_S(1)[2]\oplus K.$
It remains to identify $K$. 

We now recall some constructions from \cite[Section~2.3]{plh}. 
By adjunction, we get a map 
\[\Sigma^\infty(\Omega^\infty(p_*\Z_C(1)))(-1)\to p_*\Z_S\]
As $\Sigma^\infty(-)(-1)$ commutes with pullbacks, its right adjoint commutes with pushforwards. Furthermore, the isomorphism $\Sigma^\infty L_{\A^1}(\mb{G}_{m,C})[-1]\xrightarrow{\sim} \Z_C(1)$ yields a map $L_{\A^1}(\mb{G}_{m,C})[-1]\to \Omega^\infty(\Z_C(1))$ by adjunction. Hence, we get a map \[\alpha\colon \Sigma^\infty(p_*(L_{\A^1}(\mb{G}_{m,C})))\to p_*\Z_S(1)[1].\]
We claim that $\alpha$ is an equivalence. Indeed, this can be tested after tensoring with $\Q$ and with $\Z/\ell\Z$ for any prime number $\ell$. The first case is \cite[Theorem~3.15]{plh} combined with \cite[Lemma~2.27]{plh}. After tensoring with $\Z/\ell\Z$, the functor $\Sigma^\infty$ becomes an equivalence by rigidity \cite[Corollary~3.2]{bachmannrigidity} and therefore commutes with $p_*$ and with twists (which amount to tensoring with $\mu_\ell$ in that case). 
Now the square:
\[\begin{tikzcd}
    \Sigma^\infty(L_{\A^1}(\mb{G}_{m,S}))\ar[d]\ar[r]& \Z_S(1)[1]\ar[d] \\
    \Sigma^\infty(p_*(L_{\A^1}(\mb{G}_{m,C}))) \ar[r,"\alpha"] &p_*\Z_C(1)[1]
\end{tikzcd}\]
commutes. Since $p^*$ and $L_{\A^1}$ commute, we have an exchange transformation $L_{\A^1}p_*\to p_*L_{\A^1}$. We claim that \[L_{\A^1}(p_*(\mb{G}_{m,C})) \to p_*(L_{\A^1}(\mb{G}_{m,C}))\]
is an equivalence. This can indeed be tested after tensoring with $\Q$ (this is compatible with $p_*$ by the same proof as \cite[Corollary~5.4.11]{em}) and with $\Z/\ell\Z$ for any prime number $\ell$, noting that $\mb{G}_{m,C}\otimes_\Z \Q$ is $\A^1$-local (see \textit{e.g.} the proof of \cite[Proposition 2.17]{plh}) and that $\mb{G}_{m,C}\otimes_\Z \Z/\ell\Z\simeq \mu_\ell[1]$ is $\A^1$-local over $S[1/\ell]$ and that $L_{\A^1}$ vanishes on $S\times_\Z \Z/\ell\Z$ by \cite[Proposition~A.3.4]{em}. 

As the map $\mb{G}_{m,S}\to \mathrm{H}^0p_*(\mb{G}_{m,C}))$ is an equivalence by \cite[Lemma~2.29]{plh}, we get a map $\beta \colon R^1p_*(\mb{G}_{m,C})[-1]\to P$ where $P$ is the cone of $\mb{G}_{m,S}\to p_*\mb{G}_{m,C}.$ By \cite[Section~9.3, Theorem~1]{neronmodels}, we have an exact sequence \[0\to \mathrm{Jac}(C/S)\to R^1p_*(\mb{G}_{m,C})\to \rho_\sharp(E)\to 0\] with $E$ an étale local system of rank $1$.
Applying $\Sigma^\infty L_{\A^1}$ thus yields a map \[\Sigma^\infty L_{\A^1}\mathrm{Jac}(C/S)\to K_2\simeq \Z_S\oplus K\to K.\] The lemma follows if we can show that it is an equivalence. This is true after tensoring with $\Q$ by \cite[Corollary~3.20]{plh} and after tensoring with $\Z/\ell\Z$ by classical computations of torsion étale cohomology (for instance, it can be tested on stalks where it is \cite[Corollaire~4.7]{sga4}).
\end{proof}

\section{The motivic t-structure on Ind-\texorpdfstring{$1$}{1}-motives}

We now define the motivic t-structure on Ind-$1$-motives. The following definition mimics \cite[Definition~4.10]{plh}. We will then compare this t-structure with the canonical t-structure of \cite[Corollary~C.5.5]{bvk} over a field. By showing how the t-structure interacts with torsion objects, namely that it induces the ordinary t-structure on them via the rigidity theorem \cite[Corollary~3.2]{bachmannrigidity}, we prove that the t-structure is compatible with pullbacks building on the case of rational coefficients from \cite[Theorem~4.1]{plh2}. This will allow us to show that the functor $\Phi_S$ sends Deligne $1$-motives with torsion to the heart of the motivic t-structure.
\begin{definition}
    Let $S$ be a scheme and let $\Lambda$ be a flat $\Z$-algebra. The \emph{ordinary motivic t-structure} (or \emph{motivic t-structure} for simplicity) is the t-structure on $\DM^{\mathrm{ind}1}_{\et}(S,\Lambda)$ generated by the objects 
    \[\mathcal{DG}_S=\{f_\sharp\Phi_X(M)\mid f\colon X \to S \text{ étale, } M \in \rmm_1^\D(S,\Lambda)\}.\]
\end{definition}

\begin{proposition}\label{JGgenerates}
    Let $S$ be an excellent scheme and let $\Lambda$ be a flat $\Z$-algebra. The motivic t-structure is generated by the following family:
    \[\mathcal{JG}_S=\{f_\sharp\Phi_X(M\otimes_\Z\Lambda)\mid f\colon X \to S \text{ étale, } M \in \{\Z_X,\mathbb{G}_{m,X}[-1]\} \sqcup \{\mathrm{Jac}(C/X)[-1]\}_{C/X \in \mathfrak{Curv}(X)} \}.\]
with $\mathfrak{Curv}(X)$ the family of all smooth projective curves over $X$ with geometrically connected fibres
and a section $X\to C$.
\end{proposition}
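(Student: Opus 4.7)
The plan is to prove that $\mathcal{JG}_S$ and $\mathcal{DG}_S$ generate the same connective subcategory of $\DM^{\mathrm{ind}1}_\et(S,\Lambda)$. The inclusion $\mathcal{JG}_S\subseteq\mathcal{DG}_S$ is immediate, since each of $\Z_X$, $\mathbb{G}_{m,X}[-1]$ and $\mathrm{Jac}(C/X)[-1]$ is $\Phi_X$ applied to an effective Deligne $1$-motive (respectively $[\rho_\sharp\Z_X\to 0]$, $[0\to\mathbb{G}_{m,X}]$ and $[0\to\mathrm{Jac}(C/X)]$). For the reverse inclusion we must show that every $f_\sharp\Phi_X(M)$ with $f\colon X\to S$ étale and $M$ a Deligne $1$-motive on $X$ belongs to the smallest subcategory $\langle\mathcal{JG}_S\rangle$ of $\DM^{\mathrm{ind}1}_\et(S,\Lambda)$ containing $\mathcal{JG}_S$ and stable under colimits, extensions and shifts $[-n]$ for $n\geq 0$. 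Since $f_\sharp$ is a left adjoint between stable $\infty$-categories it preserves all three operations, and since $f_\sharp(\mathcal{JG}_X)\subseteq\mathcal{JG}_S$, we may reduce to $f=\mathrm{id}$.

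Write $M=[\rho_\sharp(L)\to G]$; the canonical exact triangle $G[-1]\to L_\D(M)\to\rho_\sharp(L)$ together with the semi-abelian extension $0\to T\to G\to A\to 0$ decompose $\Phi_S(M)$ into the three pieces $\rho_!(L):=\Sigma^\infty L_{\A^1}\rho_\sharp(L)$, $\Sigma^\infty L_{\A^1}(T)[-1]$ and $\Sigma^\infty L_{\A^1}(A)[-1]$, so it is enough to place each of these in $\langle\mathcal{JG}_S\rangle$. Given $L\in\Loc_S(\Lambda)$, pick a finite étale surjection $p\colon U\to S$ trivialising $L$; the associated Čech hypercover presents $L$ as the geometric realisation of a connective simplicial object whose $n$-th piece is a finite sum of $p^n_\sharp(\Lambda_{U^n}^{r_n})$, and applying the colimit-preserving functor $\rho_!$ yields $\rho_!(L)\in\langle\mathcal{JG}_S\rangle$ since $\rho_!(p^n_\sharp\Lambda_{U^n})=p^n_\sharp\Z_{U^n}\in\mathcal{JG}_S$. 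For a torus $T$, \Cref{toruscase} identifies $\Sigma^\infty L_{\A^1}(T)[-1]$ with $\rho_!(X_*(T))(1)$; the projection formula $g_\sharp(\Lambda_V)\otimes\Z_S(1)\simeq g_\sharp(\Z_V(1))$ for étale $g$, combined with $\Z_X(1)=\Phi_X(\mathbb{G}_{m,X}[-1])\in\mathcal{JG}_X$, reduces the torus case to the previous one applied to the cocharacter lattice $X_*(T)$.

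The main obstacle is the abelian scheme case; the plan is to adapt the argument of the proof of \Cref{Delignemotare1mot}. Using the localisation triangle \eqref{localization} and the fact that $j_\sharp$ for an open immersion preserves $\langle\mathcal{JG}_-\rangle$, we reduce to a neighbourhood of a generic point. Then \cite[Theorem~11]{spacefillingcurveskatz} combined with \cite[Lemma~4.7]{plh} provides, on a suitable étale neighbourhood $q\colon W\to S$, a smooth projective curve $p\colon C\to W$ with a section and a surjective homomorphism $\mathrm{Jac}(C/W)\to A_W$ whose kernel $K$ is finite étale. The triangle
\[
\rho_\sharp(K)[-1]\to\Sigma^\infty L_{\A^1}\mathrm{Jac}(C/W)[-1]\to\Sigma^\infty L_{\A^1}(A_W)[-1]
\]
combined with the local system case applied to $K$ and the fact that $\mathrm{Jac}(C/W)[-1]\in\mathcal{JG}_W$ places $\Sigma^\infty L_{\A^1}(A_W)[-1]$ in $\langle\mathcal{JG}_W\rangle$. Étale descent in $\DM$ then realises $\Sigma^\infty L_{\A^1}(A)[-1]$ as a geometric realisation of terms of the form $q^\bullet_\sharp\Sigma^\infty L_{\A^1}(A_{W^\bullet})[-1]$, each lying in $\langle\mathcal{JG}_S\rangle$ by $f_\sharp$-stability. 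The delicate point throughout is to phrase each descent argument as a \emph{connective} colimit, which is why we systematically use $f_\sharp$-type pushforwards and geometric realisations rather than arbitrary diagrams.
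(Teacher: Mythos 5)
Your first two reductions (local systems via the \v{C}ech nerve of a trivializing finite \'etale cover, and tori via \Cref{toruscase} plus the projection formula) are fine, and amount to reproving, with infinite colimits allowed in the aisle, what the paper gets from \cite[Lemma~3.1.10]{AM1}; just note two small points there: the trivialized value of $L$ is only a finitely presented (possibly torsion) module, so the \v{C}ech terms are constant sheaves at such a module and need one further step (a two--term free resolution) before they are sums of $p^{(n)}_\sharp\Z$'s, and with the paper's cohomological convention the aisle is closed under the shifts $[n]$ for $n\geqslant 0$, not $[-n]$.

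The abelian scheme case, however, has a genuine gap. The sentence ``using the localisation triangle \eqref{localization} and the fact that $j_\sharp$ preserves $\langle\mathcal{JG}_-\rangle$, we reduce to a neighbourhood of a generic point'' silently discards the third term of \eqref{localization}: to deduce that $\Sigma^\infty L_{\A^1}(A)[-1]$ lies in the aisle from the triangle $j_!j^*\to\id\to i_*i^*$ you must also place $i_*i^*\Sigma^\infty L_{\A^1}(A)[-1]$ in the aisle over $S$, and $i_*$ along a closed immersion does \emph{not} preserve the subcategory generated by $\mathcal{JG}$ for formal reasons (a curve or Jacobian over $Z$ is not one over $S$). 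This is exactly the content of \Cref{i_*JG} (\cite[Proposition~4.6]{plh}), which the paper combines with a noetherian induction, and it is precisely where the excellence hypothesis in the statement is used; your argument never invokes excellence, which is a symptom of the missing ingredient. Your final ``\'etale descent'' step does not repair this: an \'etale neighbourhood $q\colon W\to S$ of the generic point is not an \'etale cover of $S$, so the \v{C}ech realization along $q$ only recovers $j_\sharp j^*$ of the motive, where $j$ is the open image of $q$, and the closed complement is again left untreated. The argument over $W$ itself (Katz's theorem, spreading out, the triangle $\rho_\sharp(K)[-1]\to\Sigma^\infty L_{\A^1}\mathrm{Jac}(C/W)[-1]\to\Sigma^\infty L_{\A^1}(A_W)[-1]$ with $K$ finite \'etale) is correct and matches the paper, but to conclude you must add the noetherian induction on closed subschemes together with \Cref{i_*JG} to absorb the $i_*i^*$ terms.
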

\begin{proof} This proof is analogous to \cite[Proposition~4.9]{plh} with \cite[Lemma~3.1.10]{AM1} as an extra ingredient.
We can assume $\Lambda=\Z$. Let $M\in \mathcal{DG}_S$, we have to prove that $M$ belongs to $\langle\mathcal{JG}_S\rangle_{-}$ where $\langle \ecal\rangle_{-}$ is the subcategory of $\DM_{\et}(S,\Lambda)$ generated from $\ecal$ by finite colimits, extensions and retracts. As $\mathcal{JG}_*$ is closed by $f_\sharp$ for $f$ étale, we may assume that $M$ is either of the form $\rho_!(L)$ or $\Sigma^\infty L_{\AAA}G[-1]$. The first case follows from \cite[Lemma~3.1.10]{AM1}. In the second case, we may further assume that $G$ is a torus or an abelian scheme. The case of a torus then follows from \Cref{toruscase} applying \cite[Lemma~3.1.10]{AM1} again. We can therefore assume $G$ to be an abelian scheme $A$. 

Now, by \Cref{i_*JG} below, we can assume $S$ to be reduced. Using the localization triangle \eqref{localization}and a noetherian induction as well as \Cref{i_*JG}, we are reduced to proving that $j^*(\Sigma^\infty L_{\AAA}(A))[-1]$ for $j\colon U\to S$ a non-empty open subscheme belongs to $\langle \mathcal{JG}_U\rangle_{-}$. But this is true generically by \cite[Theorem~11]{spacefillingcurveskatz} so it remains true on a non-empty open subscheme by \cite[Lemma~4.7(ii)]{plh}.
\end{proof}
\begin{lemma}\label{i_*JG}(\cite[Proposition~4.6]{plh}) 
    Let $i\colon Z\to S$ be a closed immersion with $S$ excellent. Then, $i_*(\mathcal{JG}_Z)\subseteq \langle\mathcal{JG}_S\rangle_{-}$ where $\langle \ecal\rangle_{-}$ is the subcategory of $\DM_{\et}(S,\Lambda)$ generated from $\ecal$ by finite colimits, extensions and retracts.
\end{lemma}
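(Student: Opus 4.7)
The plan is to follow \cite[Proposition~4.6]{plh}, first reducing to atomic generators over $Z$ and then handling each case via localization triangles combined with \Cref{toruscase} and \Cref{curvedecomposition}. Concretely, the first step is to commute $i_*$ past the étale $f_\sharp$ appearing in the generators of $\mathcal{JG}_Z$. Given an étale $f\colon X\to Z$, topological invariance of the étale site together with a standard limit argument over étale neighbourhoods of $Z$ in $S$ (which is legitimate since $S$ is noetherian) produces an étale neighbourhood $V\to S$ of $Z$, with $V\times_S Z \simeq Z$, and an étale $\tilde{X}\to V$ with $\tilde{X}\times_V Z \simeq X$. The composition $\tilde{f}\colon\tilde{X}\to V\to S$ is then étale, and base change for the resulting closed-étale Cartesian square yields $i_*f_\sharp\simeq\tilde{f}_\sharp\tilde{i}_*$, where $\tilde{i}\colon X\hookrightarrow\tilde{X}$ is the base change of $i$. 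Since $\tilde{f}_\sharp$ sends $\mathcal{JG}_{\tilde{X}}$ into $\mathcal{JG}_S$ (composition of étale morphisms being étale) and preserves $\langle-\rangle_{-}$ as an exact left adjoint, the statement reduces, after renaming, to showing $i_*\Phi_Z(M)\in\langle\mathcal{JG}_S\rangle_{-}$ when $M$ is one of the three atomic generators.

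For $M=\Z_Z$, the localization triangle $j_\sharp\Z_U\to\Z_S\to i_*\Z_Z$, with $j\colon U=S\setminus Z\hookrightarrow S$, realises $i_*\Z_Z$ as the cofibre of two objects of $\mathcal{JG}_S$. For $M=\mathbb{G}_{m,Z}[-1]$, \Cref{toruscase} identifies $\Phi_Z(\mathbb{G}_{m,Z}[-1])$ with $\Z_Z(1)$, and the projection formula applied to the same triangle tensored with $\Z_S(1)=\Phi_S(\mathbb{G}_{m,S}[-1])\in\mathcal{JG}_S$ handles this case analogously.

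For $M=\mathrm{Jac}(C/Z)[-1]$, I would use \Cref{curvedecomposition} to exhibit $\Phi_Z(\mathrm{Jac}(C/Z)[-1])$ as a direct summand of $p_\sharp\Z_C[-1]$, with the two remaining Tate summands covered by the previous cases. To push $p_\sharp\Z_C$ through $i_*$, proceed by noetherian induction on $Z$: by spreading out at each generic point as in \cite[Lemma~4.7]{plh}, there is a dense open $Z_0\subset Z$ over which $C$ extends to a smooth projective curve with section $\tilde{C}_0\to V_0$ for some étale neighbourhood $V_0\to S$ of $Z_0$. The localization triangle on $Z$ for $Z_0$ and its closed complement $Z_1$, pushed forward through $i_*$, expresses $i_*\Phi_Z(\mathrm{Jac}(C/Z)[-1])$ as an extension whose part over $Z_1$ lies in $\langle\mathcal{JG}_S\rangle_{-}$ by induction (as $\dim Z_1<\dim Z$), and whose part over $Z_0$ arises from $\Phi_{V_0}(\mathrm{Jac}(\tilde{C}_0/V_0)[-1])\in\mathcal{JG}_{V_0}$ after pushforward along the étale map $V_0\to S$.

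The main obstacle will be precisely this Jacobian case: unlike local systems or tori, smooth projective curves do not in general lift from $Z$ to an étale neighbourhood of $Z$ in $S$ along all of $Z$, which is why the argument must interleave noetherian induction on $Z$ with the generic lifting provided by \cite[Lemma~4.7]{plh}, rather than proceeding via a single localization triangle as in the first two cases.
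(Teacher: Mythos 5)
There is a genuine gap, and it sits exactly where you place the main weight of the argument: Step 1. Topological invariance of the étale site applies to universal homeomorphisms (e.g.\ the nilimmersion $Z_{\mathrm{red}}\to Z$), not to a closed immersion $Z\to S$ with non-empty open complement, so it gives no lifting of étale $Z$-schemes to étale neighbourhoods of $Z$ in $S$. The limit argument over étale neighbourhoods of $Z$ computes (under affineness hypotheses) the henselization of the pair $(S,Z)$, and along a henselian pair only \emph{finite} étale covers of $Z$ are known to extend (Elkik--Gabber); the generators of $\mathcal{JG}_Z$ involve arbitrary étale $f\colon X\to Z$, which in the conventions of this paper are merely quasi-finite and separated, and such $X$ need not be the restriction of any étale $\tilde X\to V$; moreover étale neighbourhoods come with a section of $V\times_S Z\to Z$, not with $V\times_S Z\simeq Z$. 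So there is no $\tilde X$ to which your exchange identity $i_*f_\sharp\simeq\tilde f_\sharp\tilde i_*$ applies, and the clean reduction ``to the three atomic generators with $X=Z$'', followed by a single localization triangle for $\Z_Z$ and $\Z_Z(1)$, collapses. This is precisely why the proof of \cite[Proposition~4.6]{plh}, which the paper's own (one-line) proof invokes with $\Lambda$-coefficients, interleaves noetherian induction on $Z$ with \emph{generic} lifting for \emph{all} the generators, not only the Jacobian ones: after shrinking $Z$ to a dense open $Z_0$ (the closed complement being handled by induction), $X\to Z$ becomes finite étale (discard the components with empty generic fibre), and finite separable residue-field extensions at the generic points lift to étale $S$-schemes, so that $X$, the torus and the curve $C$ all extend to an étale $V_0\to S\setminus(Z\setminus Z_0)$ containing $Z_0$ as a closed subscheme; only there does your exchange-plus-localization argument run.

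Two further points on the Jacobian paragraph, which otherwise has the right shape. First, after the triangle on $Z$ for $Z_0\sqcup Z_1$, the $Z_0$-part is $v_{0\sharp}\iota_{0*}\Phi_{Z_0}(\mathrm{Jac}(C_{Z_0}/Z_0)[-1])$ with $\iota_0\colon Z_0\to V_0$ the closed immersion, not $v_{0\sharp}\Phi_{V_0}(\mathrm{Jac}(\widetilde C_0/V_0)[-1])$; you still need one more localization triangle on $V_0$, using \Cref{f^*}, to write $\iota_{0*}\Phi_{Z_0}(\mathrm{Jac}(C_{Z_0}/Z_0)[-1])$ as a cofibre of a map between $u_\sharp\Phi_U(\mathrm{Jac}(\widetilde C_U/U)[-1])$ and $\Phi_{V_0}(\mathrm{Jac}(\widetilde C_0/V_0)[-1])$, both in $\mathcal{JG}_{V_0}$. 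Second, the detour through \Cref{curvedecomposition} and direct summands cannot by itself conclude: $\langle\mathcal{JG}_S\rangle_{-}$ is closed under finite colimits (hence $[1]$) but not under $[-1]$, so knowing the two Tate summands of $p_\sharp\Z_C[-1]$ separately does not place the Jacobian summand in $\langle\mathcal{JG}_S\rangle_{-}$ unless you first show the total $i_*p_\sharp\Z_C[-1]$ lies there, which you do not. Working directly with the generator $\Phi_Z(\mathrm{Jac}(C/Z)[-1])$, as in \cite[Proposition~4.6]{plh}, avoids both issues.
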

\begin{proof}
    The same proof as in \cite[Proposition~4.6]{plh} applies with $\Lambda$ instead of $\Q$ everywhere.
\end{proof}

\begin{proposition}\label{t-adj} Let $f$ be a quasi-finite morphism and let $g$ be a morphism. Then, the functors $f_!$ and $g^*$ are right t-exact with respect to the motivic t-structure.
\end{proposition}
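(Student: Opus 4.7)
The plan is to exploit that the motivic t-structure is generated by $\mathcal{DG}_S$: the non-positive part $\mathcal{D}^{\leqslant 0}$ is the smallest subcategory of $\DM^{\mathrm{ind}1}_{\et}(S,\Lambda)$ containing the generators and closed under colimits and extensions. Both $f_!$ (well-defined on Ind-$1$-motives by \Cref{f_!}) and $g^*$ preserve all colimits and are exact, the former via its right adjoint $f^!$ and the latter via $g_*$. Hence right t-exactness reduces to showing each functor maps the generating family into $\mathcal{D}^{\leqslant 0}$. For $g^*$ this is direct: given $h_\sharp\Phi_X(M) \in \mathcal{DG}_S$ with $h$ étale, smooth base change $g^*h_\sharp \simeq h'_\sharp (g')^*$ together with the pullback compatibility $(g')^*\Phi_X \simeq \Phi_{X'}(g')^*$---obtained by combining \Cref{f^*} with the standard commutation of $\Sigma^\infty L_{\AAA}$ with any pullback---identifies $g^*(h_\sharp\Phi_X(M))$ with an object of $\mathcal{DG}_{S'}$, hence of $\mathcal{D}^{\leqslant 0}_{S'}$.

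For $f_!$ with $f\colon T \to S$ quasi-finite, applied to a generator $g_\sharp\Phi_X(M) \in \mathcal{DG}_T$, the equality $g_\sharp = g_!$ valid for étale $g$ rewrites $f_!g_\sharp\Phi_X(M)$ as $(fg)_!\Phi_X(M)$ with $fg$ quasi-finite. So the problem reduces to: for every quasi-finite $h\colon X \to S$ and every Deligne $1$-motive $M$, $h_!\Phi_X(M) \in \mathcal{D}^{\leqslant 0}_S$. By Zariski's main theorem, factor $h = \pi \circ j$ with $j$ an open immersion (étale) and $\pi\colon \overline{X} \to S$ finite. Since $j_! = j_\sharp$, the intermediate $j_\sharp\Phi_X(M)$ lies in $\mathcal{DG}_{\overline{X}}$, reducing to right t-exactness of $\pi_* = \pi_!$. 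Further factoring $\pi = i \circ \pi'$ through its scheme-theoretic image $Z \hookrightarrow S$, the closed immersion $i$ is handled by \Cref{i_*JG}: since $i_*$ commutes with colimits and $i_*(\mathcal{JG}_Z) \subseteq \langle\mathcal{JG}_S\rangle_- \subseteq \mathcal{D}^{\leqslant 0}_S$, one deduces $i_*(\mathcal{D}^{\leqslant 0}_Z) \subseteq \mathcal{D}^{\leqslant 0}_S$.

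The main obstacle is then the residual finite surjective morphism $\pi'\colon \overline{X} \twoheadrightarrow Z$. The plan is to establish an analog of \Cref{i_*JG} for finite morphisms, namely $\pi'_*(\mathcal{JG}_{\overline{X}}) \subseteq \langle\mathcal{JG}_Z\rangle_-$, by Noetherian induction on $\dim Z$. Generically on $Z$, $\pi'$ becomes a finite morphism of fields, for which each of the three explicit shapes of generators in $\mathcal{JG}$---$\Z$, $\Gm[-1]$, and $\mathrm{Jac}(C/-)[-1]$---can be pushed forward inside $\langle\mathcal{JG}\rangle_-$ using normalization for Jacobians and Weil restriction/trace for tori. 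The situation extends to an open neighborhood by Noetherian approximation, and the closed complement is dispatched by the induction hypothesis combined with the already-settled closed immersion case via the localization triangle.
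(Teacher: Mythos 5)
The paper's own proof of this proposition is a single sentence citing \Cref{i_*JG}, so your proposal is essentially an expansion of what the paper leaves implicit, and most of it is sound: the reduction to checking generators (both functors are colimit-preserving and exact), the treatment of $g^*$ via smooth base change and \Cref{f^*}, the use of $g_\sharp\simeq g_!$ for étale $g$, the Zariski factorization of a quasi-finite map into an open immersion followed by a finite morphism, and the closed-immersion case via \Cref{i_*JG} are all correct and are exactly the ingredients one expects behind the paper's terse argument.

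The genuine gap is in your handling of the residual finite surjective morphism $\pi'$. You propose that generically $\pi'$ is a finite morphism of fields and that the three explicit shapes in $\mathcal{JG}$ can be pushed forward ``using normalization for Jacobians and Weil restriction/trace for tori''. In positive residue characteristic a finite extension of fields need not be separable, and Weil restriction along a purely inseparable extension does \emph{not} preserve tori or abelian schemes (the restriction of $\mathbb{G}_m$ or of an abelian variety acquires unipotent parts), so the step as written fails precisely in the case that is not already covered by the finite étale situation. The standard repair is to split the generic extension into its separable part, which spreads out to a finite étale morphism where $\pi_*\simeq\pi_\sharp$ and composition with étale maps keeps you inside $\mathcal{DG}$ (by \Cref{JGgenerates} it suffices to land in the non-positive part, so there is no need to reproduce the specific $\mathcal{JG}$-shapes, which is also why ``normalization for Jacobians'' is both unjustified and unnecessary), and its purely inseparable part, which is a universal homeomorphism and hence invisible to étale motives; with integral coefficients this last point itself needs the vanishing of $p$-torsion motives over characteristic~$p$ schemes (\cite[Proposition~A.3.4]{em}) together with the rational statement, and one must also invoke invariance under nilpotent thickenings to dispose of non-reduced structure. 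Without these ingredients your Noetherian induction does not close in characteristic $p$; with them, your architecture goes through and matches the intended argument behind the paper's citation of \Cref{i_*JG}.
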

\begin{proof} This follows from \Cref{i_*JG}.
\end{proof}

\begin{corollary}\label{t-adj2} Let $f$ be a morphism. 
\begin{enumerate} 
\item If $f$ is étale, the functor $f^*=f^!$ is t-exact.
\item If $f$ is finite, the functor $f_!=f_*$ is t-exact.
\end{enumerate}
\end{corollary}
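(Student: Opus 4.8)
The plan is to deduce \Cref{t-adj2} from \Cref{t-adj} together with the adjunctions and duality properties available for étale and finite morphisms. First I would recall that for an étale morphism $f$ one has the identification $f^!=f^*$ (the étale morphisms being, in particular, smooth of relative dimension zero, so that the purity isomorphism degenerates to the identity; in the setting of $\DM_\et$ this is the standard relative purity for étale maps, which also holds for the subcategories $\DM_\et^{(\mathrm{ind})1}$ by \Cref{f_!}). Combining this with \Cref{t-adj}, which already tells us that $g^*$ is right t-exact for any $g$ and that $f_!$ is right t-exact for any quasi-finite $f$, gives that $f^*=f^!$ is right t-exact; on the other hand $f^*$ admits the right adjoint $f_*=f_!$ (again since $f$ is étale, $f_\sharp$ is left adjoint to $f^*$ and $f_!=f_\sharp$ as $f$ is étale separated of finite type — wait, one should be careful: for étale $f$, $f_!=f_\sharp(-)$ up to a twist that vanishes since the relative dimension is zero, hence $f^*=f^!$ has left adjoint $f_!=f_\sharp$ and right adjoint $f_*$). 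Since the right adjoint $f_\sharp=f_!$ of $f^!$ is right t-exact by \Cref{t-adj}, its right adjoint $f^!=f^*$ is left t-exact; hence $f^*=f^!$ is t-exact, proving (1).

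For (2), let $f$ be finite. Then $f$ is in particular quasi-finite and proper, so $f_!=f_*$ (proper base change / the fact that $f_!=f_*$ for proper morphisms, which also preserves $1$-motives by \Cref{f_!}). By \Cref{t-adj}, $f_!=f_*$ is right t-exact. It remains to see it is left t-exact. The point is that for a finite morphism $f$, the functor $f^!$ agrees with $f^*$ up to the fact that $f$ is finite: more precisely, since $f$ is finite flat — or after a dévissage, finite in general — one uses that $f_*=f_!$ has a left adjoint $f^*$ which is right t-exact by \Cref{t-adj}, so $f_*=f_!$ is left t-exact, and combined with the right t-exactness already noted, $f_*=f_!$ is t-exact. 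So the argument for (2) is symmetric: $f_!$ being right t-exact and admitting the left adjoint $f^*$ which is also right t-exact forces $f_!=f_*$ to be both left and right t-exact.

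Let me organize this cleanly. Recall that a functor between t-categories is left t-exact iff its left adjoint is right t-exact, and right t-exact iff its right adjoint is left t-exact. For $f$ étale: $f^*$ is right t-exact (\Cref{t-adj}) and has left adjoint $f_\sharp=f_!$ which is right t-exact (\Cref{t-adj}, as $f$ is quasi-finite); the right-t-exactness of $f_!$ means $f^!$ is left t-exact, and $f^!=f^*$ for $f$ étale, so $f^*=f^!$ is t-exact. For $f$ finite: $f_!=f_*$ (proper) is right t-exact (\Cref{t-adj}) and has left adjoint $f^*$ which is right t-exact (\Cref{t-adj}); the right-t-exactness of $f^*$ means $f_*$ is left t-exact, hence $f_!=f_*$ is t-exact.

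The only genuinely delicate input is the identification $f^!\simeq f^*$ for $f$ étale within the subcategories of Ind-$1$-motives — this is the relative purity statement for étale morphisms, which holds in $\DM_\et(-,\Lambda)$ and restricts to $\DM_\et^{\mathrm{ind}1}(-,\Lambda)$ because these subcategories are stable under the relevant six functors by \Cref{f_!}; I expect that to be the one point requiring a citation rather than a one-line deduction, while everything else is a formal consequence of \Cref{t-adj} and adjunction. So the structure of the proof is: (i) invoke $f^!=f^*$ for $f$ étale and $f_!=f_*$ for $f$ finite/proper; (ii) apply the adjunction criterion for t-exactness twice in each case, feeding in the two right-t-exactness statements from \Cref{t-adj}.

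\begin{proof}
Recall the standard criterion: a functor $F$ between t-categories is right t-exact if and only if its right adjoint is left t-exact, and left t-exact if and only if its left adjoint is right t-exact.

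(1) Let $f$ be étale. Then $f$ is smooth of relative dimension $0$, so by relative purity $f^!\simeq f^*$ (and this holds inside $\DM^{\mathrm{ind}1}_\et(-,\Lambda)$ since these categories are stable under the six functors by \Cref{f_!}); moreover $f_!=f_\sharp$. By \Cref{t-adj}, $g^*$ is right t-exact for any morphism $g$, so in particular $f^*=f^!$ is right t-exact. Also by \Cref{t-adj}, $f_!=f_\sharp$ is right t-exact since $f$ is quasi-finite; as $f_\sharp$ is left adjoint to $f^*=f^!$, the criterion gives that $f^*=f^!$ is left t-exact. Hence $f^*=f^!$ is t-exact.

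(2) Let $f$ be finite. Then $f$ is proper and quasi-finite, so $f_!=f_*$ (and again this is compatible with the subcategories of $1$-motives by \Cref{f_!}). By \Cref{t-adj}, $f_!=f_*$ is right t-exact. Its left adjoint is $f^*$, which is right t-exact by \Cref{t-adj}; the criterion then gives that $f_*=f_!$ is left t-exact. Hence $f_!=f_*$ is t-exact.
\end{proof}
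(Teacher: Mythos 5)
Your proof is correct and is essentially the argument the paper intends (the corollary is stated without proof precisely because it is this formal consequence of \Cref{t-adj}): use $f^!\simeq f^*$, $f_!\simeq f_\sharp$ for $f$ étale and $f_!\simeq f_*$ for $f$ finite, then apply the adjunction criterion for t-exactness to the pairs $(f_\sharp,f^*)$ and $(f^*,f_*)$, feeding in the two right t-exactness statements of \Cref{t-adj}. Note only that your exploratory paragraph briefly miscalls $f_\sharp$ a \emph{right} adjoint of $f^!$, but the final clean proof states the adjunctions correctly, so the argument stands as written.
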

Let us now describe the t-structure in the case of the spectrum of a field $k$. Firstly, the functor $L_\D$ behaves nicely:
\begin{constr}\label{Phiknatural}
    Let $k$ be field of characteristic exponent $p$.
    Then the functor \[L_\D\colon \rmm_1^\D(k,\Z[1/p])\to\D((\Sm_S)_\et,\Z[1/p])\] sends exact sequences to exact triangles.
Indeed, by \cite[Corollary~C.5.5]{bvk}, any exact sequence in $\rmm_1^\D(k,\Z[1/p])$ is equivalent to an exact sequence of complexes which is therefore sent to an exact triangle by $L_\D$.
This implies that the functor \[\Phi_k\colon \rmm_1^\D(k,\Z[1/p])\to \DM^1_{\et}(k,\Z[1/p])\]
sends exact sequences to exact triangles. Hence, by \cite[Corollary~7.4.12]{cisinski-bunke}, it extends uniquely to a functor \[\Phi_k^{\natural}\colon \D^b(\rmm_1^\D(k,\Z[1/p]))\to \DM_{\et}^1(k,\Z[1/p]).\]
This functor is an $\infty$-enhancement of the functor considered in \cite[Definition~2.7.1]{bvk} as they coincide on the heart and is therefore an equivalence by \cite[Theorem~2.1.2]{bvk} combined with Voevodsky's cancellation theorem for étale motives as in the proof of \Cref{curvedecomposition}.

If now $\Lambda$ is a localization of $\Z[1/p]$, tensoring with $\Perf_\Lambda$ yields an equivalence:
\[\Phi_k^\natural\colon \D^b(\rmm_1^\D(k,\Lambda))\to \DM_{\et}^1(k,\Lambda)\]
which extends the functor $\Phi_k$ from before.
\end{constr}

\begin{proposition}\label{bvkvsord}
    Let $k$ be field of characteristic exponent $p$ and let $\Lambda$ be a localization of $\Z[1/p]$. The functor \[\D^b(\rmm_1^\D(k,\Lambda))\to \DM_{\et}^{\mathrm{ind}1}(k,\Lambda)\] induced by $\Phi_k^\natural$ is a t-exact when the left hand side is endowed with its canonical t-structure and the right hand side is endowed with the motivic t-structure. In particular, the t-structure on $\DM_{\et}^{\mathrm{ind}1}(k,\Lambda)$ restricts to $\DM_{\et}^{1}(k,\Lambda)$ and $\Phi_k^\natural$ is a t-equivalence.
\end{proposition}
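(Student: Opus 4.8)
The plan is to show t-exactness of $\Phi_k^\natural$ by checking that it carries the generators of the canonical t-structure on $\D^b(\rmm_1^\D(k,\Lambda))$ to objects that are non-positive (resp. non-negative) for the motivic t-structure, and then upgrading this to genuine t-exactness using the fact that both t-structures are bounded (on the subcategory $\DM_{\et}^1$) and that $\Phi_k^\natural$ is an equivalence onto $\DM_{\et}^1(k,\Lambda)$. Concretely, the canonical t-structure on $\D^b(\rmm_1^\D(k,\Lambda))$ is generated under colimits, extensions, shifts in the non-negative direction, and retracts by the objects of $\rmm_1^\D(k,\Lambda)$ placed in degree $0$; so it suffices to prove that $\Phi_k^\natural$ sends every Deligne $1$-motive $M$ (in degree $0$) into $\DM_{\et}^{\mathrm{ind}1}(k,\Lambda)^\heart$, i.e. into both $\DM^{\leqslant 0}$ and $\DM^{\geqslant 0}$.

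First I would handle right t-exactness. The motivic t-structure is generated by $\mathcal{DG}_k$ (or by the smaller family $\mathcal{JG}_k$ of \Cref{JGgenerates}), which consists precisely of $\Phi_X(M')$ for $M'$ a Deligne $1$-motive and $X$ étale over $k$ (so $X$ a finite separable extension); since $\Phi_k(M)$ literally lies in $\mathcal{DG}_k$, it is non-positive by definition, and more generally $\Phi_k^\natural$ applied to a complex of Deligne $1$-motives concentrated in non-positive degrees is non-positive. The content is then the non-negativity: I must show $\Phi_k(M) \in \DM^{\geqslant 0}$ for every $M \in \rmm_1^\D(k,\Lambda)$, equivalently $\Hom_{\DM_{\et}^{\mathrm{ind}1}}(\Phi_X(M'), \Phi_k(M)[n]) = 0$ for all $n < 0$, all $X/k$ étale, and all Deligne $1$-motives $M'$. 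Using the equivalence $\Phi_k^\natural \colon \D^b(\rmm_1^\D(k,\Lambda)) \xrightarrow{\sim} \DM_{\et}^1(k,\Lambda)$ from \Cref{Phiknatural}, these Hom-groups are computed as $\Hom_{\D^b(\rmm_1^\D(k,\Lambda))}(i_{X/k}^* \text{-type object}, M[n])$ — and after reducing $X$ to $k$ itself by finiteness of $X/k$ and (derived) adjunction along the finite étale cover, this becomes $\Ext^{-n}_{\rmm_1^\D(k,\Lambda)}(M', M)$ in the abelian category, which vanishes for $-n < 0$. The key input here is that the abelian category $\rmm_1^\D(k,\Lambda)$ has Ext-groups concentrated in non-negative degrees (which is automatic) together with the comparison of the $\D^b$-Hom with the motivic Hom, i.e. the equivalence of \Cref{Phiknatural}.

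The cleanest way to organize the argument, and the one I expect to use, is: (1) invoke \Cref{Phiknatural} to get the equivalence $\Phi_k^\natural \colon \D^b(\rmm_1^\D(k,\Lambda)) \xrightarrow{\sim} \DM_{\et}^1(k,\Lambda)$; (2) observe via \Cref{JGgenerates} that the motivic t-structure on $\DM_{\et}^{\mathrm{ind}1}(k,\Lambda)$ is generated by the image under $\Phi_k^\natural$ of the objects $N[0]$ for $N$ in an explicit set of Deligne $1$-motives (and their $f_\sharp$'s for $f$ finite étale — which by \Cref{t-adj2} corresponds under $\Phi^\natural$ to pushing forward in $\D^b$, hence stays within complexes concentrated in degree $0$ since $f_* = f_!$ is t-exact on $\rmm_1^\D$); (3) conclude that the motivic t-structure on $\DM_{\et}^{\mathrm{ind}1}(k,\Lambda)$, transported back along $\Phi_k^\natural$ restricted to $\DM^1$ and the canonical-t-structure-generated t-structure on $\Ind(\D^b(\rmm_1^\D(k,\Lambda)))$, agree because they are generated by the same set of objects in non-positive degrees; hence $\Phi_k^\natural$ is t-exact and, being an equivalence onto $\DM_{\et}^1(k,\Lambda)$, the motivic t-structure restricts to $\DM_{\et}^1(k,\Lambda)$ where $\Phi_k^\natural$ becomes a t-equivalence. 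The main obstacle is step (3): matching the two generated t-structures requires knowing that $\D^b(\rmm_1^\D(k,\Lambda))$, with its canonical t-structure, embeds in its $\Ind$-completion compatibly with how $\DM_{\et}^1$ sits inside $\DM_{\et}^{\mathrm{ind}1}$, and that the generators on both sides literally coincide under $\Phi_k^\natural$; once the generating families are identified, the uniqueness of the t-structure generated by a set of objects (e.g. \cite[Proposition~1.4.4.11]{ha}) finishes the proof. I would also need the technical point that $\langle \mathcal{JG}_k \rangle_-$ — finite colimits, extensions, retracts of $\mathcal{JG}_k$ — matches the heart-generated bounded subcategory, which is exactly the content of \Cref{JGgenerates} combined with \Cref{i_*JG}.
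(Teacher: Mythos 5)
Your argument is correct and follows the same skeleton as the paper's proof: both reduce to the fact that the canonical t-structure on $\D^b(\rmm_1^\D(k,\Lambda))$ is generated by its heart and that the motivic t-structure is generated by $\mathcal{DG}_k$, and both then exploit the equivalence of \Cref{Phiknatural} to turn the relevant mapping groups into $\Ext$-groups of the abelian category, where negative ones vanish. The one genuinely different choice is how you treat the generators $f_\sharp\Phi_X(M')$ for $f\colon X\to\Spec(k)$ finite étale: the paper stays over $k$ by showing, via Weil restriction (\Cref{WeilRes}), that $f_\sharp\Phi_X(M')\simeq\Phi_k(\mathrm{Res}_f(M'))$ is again in the image of the heart, whereas you move the computation to $X$ by the $(f_\sharp,f^*)$-adjunction together with the pullback compatibility \Cref{f^*} and the fact that $X$ is a finite disjoint union of field spectra, so \Cref{Phiknatural} applies there; this is a perfectly valid substitute and avoids \Cref{WeilRes} altogether. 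Two small caveats on your write-up: (i) the indexing in ``$\Ext^{-n}$ vanishes for $-n<0$'' is off — what you use is $\Hom_{\D^b}(M',M_X[n])=\Ext^n(M',M_X)=0$ for $n<0$; and (ii) your ``cleanest organization'' (2)--(3) is the weaker route: $\DM^{\mathrm{ind}1}_{\et}(k,\Lambda)$ is not claimed to be $\mathrm{Ind}$ of $\D^b(\rmm_1^\D(k,\Lambda))$ (the paper explicitly warns against this identification for $0$-motives), and \Cref{t-adj2} only gives t-exactness of $f_*$ for the motivic t-structure — it does not by itself identify $f_\sharp\Phi_X(M')$ with a Deligne $1$-motive over $k$, which is exactly what \Cref{WeilRes} supplies; fortunately your direct adjunction argument makes this detour unnecessary, and it also steers clear of the circular temptation to quote \Cref{Deligne1motareinheart}, which comes later and depends on this proposition.
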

\begin{proof}
    If $\acal$ is a small abelian category, the t-structure on $\D^b(\acal)$ is generated by $\acal$. Hence it suffices to show that if $f\colon X\to \Spec(k)$ is étale (hence finite étale) and $M$ is a Deligne $1$-motives with torsion on $X$, $f_\sharp(\Phi_X(M))$ is a Deligne $1$-motive with torsion which is a consequence of \Cref{WeilRes} below. 
\end{proof}
\begin{lemma}\label{WeilRes}
    Let $g\colon T\to S$ be finite étale morphism of schemes. Denote by \[\mathrm{Res}_g\colon \Sh((\Sm_S)_\et,\Z)\to \Sh((\Sm_T)_\et,\Z)\] the Weil restriction functor. If $N=[\rho_\sharp(L)\xrightarrow{u} G]$ is a Deligne $1$-motive with torsion on $T$, then the complex $\mathrm{Res}_g(N):=[\rho_\sharp(\mathrm{Res}_g(L))\xrightarrow{\mathrm{Res}_g(u)} \mathrm{Res}_g(G)]$ is a Deligne $1$-motive with torsion. Furthermore, the diagram the diagram
\[\begin{tikzcd}
    \rmm_1^\D(T,\Z)\ar[r,"\Phi_T"]\ar[d,"\mathrm{Res}_g"]& \DM_{\et}(T,\Z)\ar[d,"f_\sharp"]\\
    \rmm_1^\D(S,\Z)\ar[r,"\Phi_S"]& \DM_{\et}(S,\Z)
\end{tikzcd}
\]
is commutative.
\end{lemma}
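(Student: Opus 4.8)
The plan is to recognize the Weil restriction $\mathrm{Res}_g$ as the pushforward $g_*$ along $g$ --- of étale sheaves in the case of $L$, of group schemes in the case of $G$ --- and to exploit the ambidexterity of finite étale morphisms, namely that $f_\sharp\cong f_!\cong f_*$. Throughout I read the square as saying $\Phi_S\circ\mathrm{Res}_g\cong f_\sharp\circ\Phi_T$, with $\mathrm{Res}_g$ taking $T$-objects to $S$-objects.

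First I would check that $\mathrm{Res}_g(N)$ is an effective Deligne $1$-motive over $S$. Since $g$ is finite étale, there is an étale cover $S'\to S$ over which $T$ becomes a finite disjoint union of copies of $S'$, and $\mathrm{Res}_g$ commutes with base change; hence $\mathrm{Res}_g$ is étale locally a finite product. In particular it is exact on abelian sheaves, $\mathrm{Res}_g(L)=g_*L$ lies in $\Loc_S(\Z)$ whenever $L\in\Loc_T(\Z)$, and $g_*$ commutes with the change of site functor, i.e.\ $g_*\rho_\sharp\cong\rho_\sharp g_*$. Writing $0\to T_0\to G\to A\to 0$ with $T_0$ a torus and $A$ an abelian scheme, exactness of $g_*$ produces a short exact sequence $0\to\mathrm{Res}_g(T_0)\to\mathrm{Res}_g(G)\to\mathrm{Res}_g(A)\to 0$; since Weil restriction along a finite locally free morphism takes tori to tori and abelian schemes to abelian schemes (both of which can be checked étale locally, where $\mathrm{Res}_g$ is simply a finite product), $\mathrm{Res}_g(G)$ is a $\mathrm{Res}_g(T_0)$-torsor over the abelian scheme $\mathrm{Res}_g(A)$, hence representable and semi-abelian; so $\mathrm{Res}_g(N)$ is an effective Deligne $1$-motive. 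Exactness of $g_*$ together with \Cref{diagramchase} also shows that $\mathrm{Res}_g$ sends $W_\mathrm{qiso}$ into $W_\mathrm{qiso}$, so by the universal property of the Gabriel--Zisman localization it induces a functor $\mathrm{Res}_g\colon\rmm_1^\D(T,\Z)\to\rmm_1^\D(S,\Z)$.

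Next I would prove that the square commutes. By the universal property of the localization it suffices to treat an effective Deligne $1$-motive $N=[\rho_\sharp(L)\to G]$, for which $\Phi_T(N)=\Sigma^\infty L_{\A^1}L_\D(N)$, with $L_\D(N)$ the two-term complex $[\rho_\sharp(L)\to G]$ in $\D((\Sm_T)_\et,\Z)$. As $f=g$ is finite étale, $f_\sharp\cong f_!\cong f_*$ on $\DM_\et$, and $f_\sharp$ commutes with the localization $L_{\A^1}$ and the stabilization $\Sigma^\infty$ (standard properties of $f_\sharp$ for $f$ smooth), so $f_\sharp\Phi_T(N)\cong\Sigma^\infty L_{\A^1}\bigl(g_*[\rho_\sharp(L)\to G]\bigr)$. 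Exactness of $g_*=Rg_*$ gives $g_*[\rho_\sharp(L)\to G]=[g_*\rho_\sharp(L)\to g_*G]$, which by the first step equals $[\rho_\sharp(\mathrm{Res}_g L)\to\mathrm{Res}_g G]=L_\D(\mathrm{Res}_g N)$. Hence $f_\sharp\Phi_T(N)\cong\Sigma^\infty L_{\A^1}L_\D(\mathrm{Res}_g N)=\Phi_S(\mathrm{Res}_g N)$, and since all the identifications used are natural in $N$ one obtains a commutative square of functors on $\widetilde{\rmm}_1^\D(T,\Z)$, which then descends along the localizations.

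The hard part, such as it is, will be the bookkeeping in this last step: one must pin down the identification $f_\sharp\cong f_*$ for finite étale $f$ and its compatibility with $\Sigma^\infty L_{\A^1}$, as well as the commutation $g_*\rho_\sharp\cong\rho_\sharp g_*$ of finite pushforward with the change of site functor, and verify that all of these are natural rather than merely objectwise, so that one gets an honest $2$-commutative square and not just a pointwise isomorphism. Once $\mathrm{Res}_g$ is identified with $g_*$, the remaining ingredients --- exactness of finite étale pushforward, and stability of tori and abelian schemes under Weil restriction --- are classical and in any case reduce to the étale-local situation where $\mathrm{Res}_g$ is a finite product.
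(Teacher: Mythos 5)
Your proposal is correct and follows essentially the same route as the paper: the paper verifies that $\mathrm{Res}_g(L)$ is a local system by the same étale-local reduction to a finite product, cites \cite[Proposition~A.16]{plh} for $\mathrm{Res}_g(G)$ being semi-abelian, and refers to the proof of \cite[Lemma~2.22]{plh} for the commutativity of the square, whose content is exactly your identification $f_\sharp\simeq f_*=\mathrm{Res}_g$ for finite étale $f$, compatibly with $\Sigma^\infty L_{\mathbb{A}^1}$ and $\rho_\sharp$. The only difference is that you spell out (and reprove in the finite étale case, via the étale-local/ambidexterity argument) what the paper delegates to those citations.
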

\begin{proof}
By \cite[Proposition~A.16]{plh}, the sheaf $\mathrm{Res}_g(G)$ is representable by a semi-abelian scheme. We claim that $\mathrm{Res}_g(L)$ is an étale local system; as étale morphism satisfy effectivity of descent for the étale topology, this can be checked étale locally so we can assume that the map $T\to S$ to be of the form $\bigsqcup\limits_{i=1}^n S\to S$ so that $\mathrm{Res}_g(L)=L^n$ which is again an étale local system. Hence, the complex $\mathrm{Res}_g(N):=[\rho_\sharp(\mathrm{Res}_g(L))\xrightarrow{\mathrm{Res}_g(u)} \mathrm{Res}_g(G)]$ is indeed a Deligne $1$-motive with torsion. 
The same proof as \cite[Proposition~Lemma 2.22.]{plh} then shows our diagram is commutative.     
\end{proof}

To handle the general case, we will need to understand better how the t-structure behaves with respect to torsion objects.
\begin{lemma}\label{torsioninduced}
    Let $S$ be a scheme and let $\ell$ be invertible on $S$. We let $\D_{\ell^\infty}(S_{\et},\Z)$ be the subcategory of $\D(S_{\et},\Z)$ made of those complexes $K$ such that $K\otimes_\Z \Z[1/\ell]=0$. 
    Then, the functor 
    \[\rho_!\colon \D_{\ell^\infty}(S_{\et},\Z)\to \DM^{\mathrm{ind}1}_{\et}(S,\Z)\] is t-exact when the left hand side is endowed with the ordinary t-structure (see \cite[Proposition~2.2.2]{AM1}) and the right hand side is endowed with the motivic t-structure.
\end{lemma}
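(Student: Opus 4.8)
The statement of the lemma is the conjunction of two inclusions: right $t$-exactness, $\rho_!\bigl(\D_{\ell^\infty}(S_{\et},\Z)^{\leqslant 0}\bigr)\subseteq \DM^{\mathrm{ind}1}_{\et}(S,\Z)^{\leqslant 0}$, and left $t$-exactness, $\rho_!\bigl(\D_{\ell^\infty}(S_{\et},\Z)^{\geqslant 1}\bigr)\subseteq \DM^{\mathrm{ind}1}_{\et}(S,\Z)^{\geqslant 1}$ (the shifted form of preservation of coconnectivity, used because $\rho_!$ commutes with suspension). The plan is to dispatch the first by chasing a set of connective generators and to obtain the second — which is the substantial half — by identifying the relevant motivic mapping spectra with étale (hyper)cohomology through the rigidity theorem.

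For right $t$-exactness, recall that $\D_{\ell^\infty}(S_{\et},\Z)^{\leqslant 0}$ is generated under colimits and extensions by the sheaves $j_!(\Z/\ell^n)_U$ for $j\colon U\to S$ étale and $n\geqslant 1$. For such $j$ one has $j_!\simeq j_\sharp$ on both sides (both are left adjoint to $j^*$), so $\rho_!\bigl(j_!(\Z/\ell^n)_U\bigr)\simeq j_\sharp(\un_U)\otimes_\Z\Z/\ell^n$; here $j_\sharp(\un_U)=j_\sharp\Phi_U([\rho_\sharp(\Z_U)\to 0])$ belongs to $\mathcal{DG}_S$, hence to $\DM^{\leqslant 0}$, and $\DM^{\leqslant 0}$ is closed under the cofibre $-\otimes_\Z\Z/\ell^n=\mathrm{cofib}(-\xrightarrow{\ell^n}-)$. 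Since $\rho_!$ preserves colimits and exact triangles, the inclusion follows.

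For left $t$-exactness, the key input is that $\rho_!$ restricted to $\D_{\ell^\infty}(S_{\et},\Z)$ is fully faithful: for $\ell$ invertible on $S$ the rigidity theorem \cite{bachmannrigidity} makes $\rho_!\colon\D(S_{\et},\Z/\ell^n)\to\DM_{\et}(S,\Z/\ell^n)$ an equivalence, and combining this with the fully faithful inclusion $\DM_{\et}(S,\Z/\ell^n)\hookrightarrow\DM_{\et}(S,\Z)$ and passing to the filtered colimit over $n$ (both sides being generated under filtered colimits by their $\ell^n$-torsion objects) gives the claim, together with a natural identification $\Map_{\DM_{\et}(S,\Z)}(\un_S,\rho_!K)\simeq R\Gamma(S_{\et},K)$; using that $\rho_!$ commutes with pullbacks and that rigidity matches the Tate twist with the $\mu_{\ell^n}$-twist, one likewise gets $\Map(\un_S(1),\rho_!K)\simeq R\Gamma(S_{\et},K(-1))$ and $\Map(p_\sharp\un_C,\rho_!K)\simeq R\Gamma(C_{\et},p^*K)$ for any $p\colon C\to S$. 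Now fix $K\in\D_{\ell^\infty}(S_{\et},\Z)^{\geqslant 1}$; to show $\rho_!K\in\DM^{\geqslant 1}$ it suffices to check that $\Map(P,\rho_!K)$ has cohomology concentrated in degrees $\geqslant 1$ for every $P$ in the generating family $\mathcal{JG}_S$ of \Cref{JGgenerates}. Writing $P=f_\sharp\Phi_X(M)$ with $f$ étale and using $f_\sharp\dashv f^*$, $f^*\rho_!\simeq\rho_! f^*$, and the $t$-exactness of $f^*$ on torsion sheaves, we reduce to $f=\mathrm{id}$, i.e. to $P\in\{\un_S,\ \un_S(1),\ \Sigma^\infty L_{\AAA}\mathrm{Jac}(C/S)[-1]\}$. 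The first two cases are immediate, since $R\Gamma(S_{\et},-)$ is left $t$-exact and $K(-1)$ is again coconnective. For the Jacobian generator, \Cref{curvedecomposition} exhibits $\Sigma^\infty L_{\AAA}\mathrm{Jac}(C/S)$ as the summand of $p_\sharp\un_C$ complementary to $\un_S$ and $\un_S(1)[2]$, the $\un_S$-summand being the one split off by the section $\sigma$; hence $\Map\bigl(\Sigma^\infty L_{\AAA}\mathrm{Jac}(C/S),\rho_!K\bigr)$ is a direct summand of $\mathrm{cofib}\bigl(p^*\colon R\Gamma(S_{\et},K)\to R\Gamma(C_{\et},p^*K)\bigr)\simeq R\Gamma\bigl(S_{\et},\mathrm{cofib}(K\to Rp_*p^*K)\bigr)$. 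Since $C\to S$ is a smooth projective curve with geometrically connected fibres, $R^0p_*p^*\simeq\mathrm{id}$ by proper base change, so the unit $K\to Rp_*p^*K$ is a split monomorphism whose cofibre has cohomology sheaves built, via the Leray spectral sequence, out of $R^1p_*p^*\mathcal{H}^{i-1}(K)$ and $R^2p_*p^*\mathcal{H}^{i-2}(K)$; as $K\in\D^{\geqslant 1}$ this cofibre lies in $\D(S_{\et},\Z)^{\geqslant 2}$, so applying the left $t$-exact functor $R\Gamma(S_{\et},-)$ and shifting by $[1]$ puts $\Map\bigl(\Sigma^\infty L_{\AAA}\mathrm{Jac}(C/S)[-1],\rho_!K\bigr)$ in degrees $\geqslant 1$. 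This establishes left $t$-exactness, and with it the lemma.

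The main obstacle is the passage through rigidity: making the identifications $\Map_{\DM}(\un_S,\rho_!K)\simeq R\Gamma(S_{\et},K)$ and its twisted and pulled-back variants precise requires care with the change of coefficients $\Z\to\Z/\ell^n$, the compatibility of $\rho_!$ with twists and pullbacks, and the colimit over $n$; and in the Jacobian case one must carry out the degree bookkeeping, namely that the primitive part of the relative cohomology of $C/S$ sits in relative degrees $\geqslant 1$, so that it cannot obstruct the coconnectivity of $\rho_!K$.
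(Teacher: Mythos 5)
Your overall skeleton is the right one and partly matches the paper's: prove right t-exactness by chasing connective generators, and prove left t-exactness by testing against the generating family $\mathcal{JG}_S$ of \Cref{JGgenerates} and transporting the computation to étale cohomology via rigidity. The cases $\un_S$ and $\un_S(1)$ are handled correctly, and your degree bookkeeping for the Jacobian summand (splitness of the unit via the section, $\mathrm{cofib}(K\to Rp_*p^*K)$ concentrated in degrees $\geqslant 2$) is sound \emph{given} the inputs you invoke. But there is a concrete gap in the Jacobian case: you invoke \Cref{curvedecomposition} to split $\Sigma^\infty L_{\AAA}\mathrm{Jac}(C/X)$ off $p_\sharp\un_C$, and that lemma is stated and proved in this paper only over a \emph{regular} base, whereas \Cref{torsioninduced} is asserted for an arbitrary (noetherian, finite-dimensional) scheme with $\ell$ invertible; the generators live over arbitrary étale $X/S$, which need not be regular. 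As written, the key step for the Jacobian generator therefore rests on a statement not available at this level of generality. The paper's own proof avoids the curve decomposition entirely: it writes $K=\colim_n K\otimes_\Z\Z/\ell^n\Z[-1]$, uses that the objects of $\mathcal{JG}_S$ are geometric to commute the mapping spectrum with this colimit (\cite[Proposition~1.2.4(5)]{AM1}), and then observes that $M\otimes_\Z\Z/\ell^n\Z$ for $M\in\mathcal{JG}_S$ is $\rho_!$ of a sheaf in the heart of $\D(S_{\et},\Z)$ — namely $f_\sharp(\Z/\ell^n\Z)_X$, $f_\sharp\mu_{\ell^n}$ or $f_\sharp({}_{\ell^n}\mathrm{Jac}(C/X))$, all of which make sense over any base — after which coconnectivity of the mapping spectrum is immediate from the ordinary t-structure. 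If you replace your Leray argument by this mod-$\ell^n$ reduction, your proof goes through without the regularity issue.

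A secondary point: your justification of the rigidity input is misstated. The forgetful functor $\DM_{\et}(S,\Z/\ell^n)\to\DM_{\et}(S,\Z)$ is \emph{not} fully faithful (already $\map_{\Z}(\Z/\ell,\Z/\ell)$ has a nontrivial $\Ext^1$ which the $\Z/\ell$-linear mapping spectrum does not see), so "rigidity for $\Z/\ell^n$ plus a fully faithful inclusion" does not literally give full faithfulness of $\rho_!$ on $\D_{\ell^\infty}(S_{\et},\Z)$. What you actually need — and what is true, and is the form of rigidity the paper uses elsewhere (e.g. in \Cref{conservativepts}) — is that the unit $K\to\rho^!\rho_!K$ is an equivalence on $\ell^\infty$-torsion objects, equivalently that $\rho_!$, $\rho^!$ identify $\ell^\infty$-torsion motives with $\ell^\infty$-torsion étale complexes; this yields your identifications $\Map(\un_S,\rho_!K)\simeq R\Gamma(S_{\et},K)$ and its twisted and pulled-back variants. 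This is fixable, but the argument as stated is not correct. Finally, note that both your proof and the paper's implicitly use that $\mathcal{JG}_S$ generates the t-structure, i.e. \Cref{JGgenerates}; this is a shared caveat, not one I hold against you.
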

\begin{proof}
    Note that the functor $\rho_!$ indeed lands in the subcategory of Ind-$1$-motives by \cite[Proposition~1.4.5]{AM1}. 
    Furthermore, the induced functor $\D(S_{\et},\Z)\to \DM^{\mathrm{ind}1}_{\et}(S,\Z)$ is right t-exact because the ordinary t-structure is generated by the representable sheaves by \cite[Proposition~2.2.1]{AM1}. 
    Hence it suffices to show that \[\rho_!\colon \D_{\ell^\infty}(S_{\et},\Z)\to \DM^{\mathrm{ind}1}_{\et}(S,\Z)\] is left t-exact, \textit{i.e.} sends non-negative objects to non-negative objects. Let $K$ be a non-negative complex in $\D_{\ell^\infty}(S_{\et},\Z)$. Since $K\otimes_\Z \Z[1/p]=0$, we get 
    \[K =\colim_n K\otimes_\Z \Z/p^n\Z[-1].\]
    As the objects of $\mc{JG}_S$ are geometric by \Cref{Delignemotare1mot}, if $M\in \mathcal{JG}_S$, knowing that $M$ is a geometric motive, we get that 
    \begin{align*}
        \map(M,\rho_!K)&=\colim_n \ \map(M,\rho_!(K\otimes_\Z\Z/p^n\Z[-1]))\\
        &=\colim_n\ \map(M\otimes_\Z \Z/p^n\Z,\rho_!(K\otimes_\Z \Z/p^n\Z[-1])).
    \end{align*}
    by \cite[Proposition~1.2.4(5)]{AM1}.
    Now, $M\otimes_\Z \Z/p^n\Z$ is of the form $\rho_!f_\sharp((\Z/p^n\Z)_X)$ or $\rho_!f_\sharp({}_{p^n}G)$ for $G=\mathbb{G}_{m,X}$ or an abelian $X$-scheme and so $M=\rho_!(L)$ where $L$ lies in the heart of $\D(S_{\et},\Z)$. As $K\otimes_\Z \Z/p^n\Z[-1]$ is non-negative, we get that $\map(M\otimes_\Z \Z/p^n\Z,\rho_!(K\otimes_\Z \Z/p^n\Z[-1]))$ is $(-1)$-connected and therefore, so is $\map(M,\rho_!K)$.
\end{proof}

\begin{definition}Let $S$ be a scheme. We define the category $\DM_{\et}^{\Q-1}(S,\Z)$ of \emph{rationally geometric $1$-motives} to be the subcategory of $\DM_{\et}(S,\Z)$ made of those motives $M$ such that $M\otimes_\Z \Q$ is a $1$-motive.
\end{definition}
\begin{proposition}\label{Q-cons} Let $S$ be an excellent scheme allowing resolution of singularities. The motivic t-structure on $\DM_{\et}^{\mathrm{ind}1}(S,\Z)$ induces a t-structure on the stable subcategory $\DM_{\et}^{\Q-1}(S,\Z)$. 
\end{proposition}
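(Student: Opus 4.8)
The plan is to exhibit explicit candidate truncation functors on $\DM_{\et}^{\Q-1}(S,\Z)$ and check the axioms of a t-structure, using the characterization of a t-structure on a stable subcategory: it suffices to show that $\DM_{\et}^{\Q-1}(S,\Z)$ is stable under the truncation functors $\tau^{\leqslant 0}$ and $\tau^{\geqslant 0}$ of the motivic t-structure on $\DM^{\mathrm{ind}1}_{\et}(S,\Z)$, since $\DM_{\et}^{\Q-1}(S,\Z)$ is a stable subcategory closed under extensions (because the property ``$M\otimes_\Z\Q$ is a $1$-motive'' is closed under cofibres, shifts and retracts — the category of $1$-motives is thick by \Cref{def AM}). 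Thus the heart of the statement is: \emph{if $M\in\DM^{\mathrm{ind}1}_{\et}(S,\Z)$ satisfies $M\otimes_\Z\Q\in\DM^1_{\et}(S,\Q)$, then $(\tau^{\leqslant n}M)\otimes_\Z\Q$ and $(\tau^{\geqslant n}M)\otimes_\Z\Q$ are again $1$-motives.}

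First I would reduce to understanding the interaction of the motivic t-structure with the rationalization functor $(-)\otimes_\Z\Q$ and with the torsion functors $(-)\otimes_\Z\Z/\ell\Z$. The key input is \Cref{torsioninduced}: for $\ell$ invertible on $S$, the functor $\rho_!\colon \D_{\ell^\infty}(S_{\et},\Z)\to\DM^{\mathrm{ind}1}_{\et}(S,\Z)$ is t-exact. Combined with rigidity \cite[Corollary~3.2]{bachmannrigidity} and the fact that $\ell$-torsion motives are supported on $S[1/\ell]$ \cite[Proposition~A.3.4]{em}, this lets me identify $(\tau^{\leqslant n}M)\otimes_\Z\Z/\ell\Z$ and $(\tau^{\geqslant n}M)\otimes_\Z\Z/\ell\Z$ with the ordinary truncations of the torsion étale sheaf $M\otimes_\Z\Z/\ell\Z$, hence they are automatically (Ind-)$0$-motives, in particular $1$-motives. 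For the rational part, I would invoke \cite[Theorem~4.1]{plh2}: the rational $\ell$-adic realization is t-exact for the rational motivic t-structure on $\DM^1_{\et}(S,\Q)$, and more to the point, the rational motivic t-structure on $\DM^1_{\et}(S,\Q)$ agrees with the restriction of the motivic t-structure on $\DM^{\mathrm{ind}1}_{\et}(S,\Q)$ (this compatibility, that the $\mathcal{JG}_S$-generated t-structure rationalizes to the t-structure of \cite{plh2}, should follow from \Cref{JGgenerates} since the generators $\Phi_X(M)\otimes_\Z\Q$ for $M\in\rmm_1^\D(X,\Q)$ coincide with the generators used in \cite{plh2}, using Haas' description and \Cref{bvkvsord} locally). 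Granting this, $(\tau^{\leqslant n}_{\mathrm{mot}}M)\otimes_\Z\Q=\tau^{\leqslant n}_{\mathrm{mot},\Q}(M\otimes_\Z\Q)$, which is a rational $1$-motive because $M\otimes_\Z\Q$ is one and $\DM^1_{\et}(S,\Q)$ carries the induced t-structure (this is precisely the content of the second rational Theorem in the introduction, \cite{plh2,vaish}).

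Then I would assemble: given $M\in\DM^{\mathrm{ind}1}_{\et}(S,\Z)$ with $M\otimes_\Z\Q$ a $1$-motive, the motive $N:=\tau^{\leqslant n}M$ has $N\otimes_\Z\Q$ a rational $1$-motive (previous paragraph) and $N\otimes_\Z\Z/\ell\Z$ a $0$-motive for every prime $\ell$, and $N$ is an Ind-$1$-motive; I want to conclude $N$ is a genuine $1$-motive. Here I would argue that a motive which is rationally a $1$-motive and whose torsion is a $0$-motive, and which lies in $\DM^{\mathrm{ind}1}$, is in fact geometric: the arithmetic fracture square $N\to (N\otimes_\Z\Q)\times_{\prod_\ell (N\otimes\Q_\ell/\Z_\ell)[\text{completion}]}\cdots$ — more concretely, one uses that $N$ sits in a triangle controlled by $N\otimes_\Z\Q$ and the $N\otimes_\Z\Z/\ell\Z$, and both endpoints are $1$-motives, combined with compactness/constructibility arguments as in \cite{AM1}. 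The same argument applies to $\tau^{\geqslant n}M$. I expect the main obstacle to be exactly this last assembly step — controlling $N$ integrally from its rational and torsion pieces while staying inside the \emph{constructible} (geometric) category of $1$-motives rather than just Ind-$1$-motives — and the careful verification that the generators-based motivic t-structure here rationalizes correctly to the t-structure of \cite{plh2}. Both of these are expected to be handled by the techniques of \cite{AM1} (notably the interplay of the candidate t-structure with rigidity and with rational coefficients), which is why the statement is plausible at this level of generality; I would model the write-up closely on the analogous $0$-motivic result there.
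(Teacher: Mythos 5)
Your second paragraph already contains, in essence, the paper's entire proof: since $\DM^{\Q-1}_{\et}(S,\Z)$ is a stable subcategory, it suffices to check that $\tau^{\leqslant 0}M$ stays in it, and for this one only needs $\tau^{\leqslant 0}(M)\otimes_\Z\Q=\tau^{\leqslant 0}(M\otimes_\Z\Q)$ (which the paper gets from \cite[Lemma~1.1.5]{AM1}, rather than from a re-identification of the rationalized generators) together with the fact that the rational truncation of a rational $1$-motive is again a $1$-motive, i.e. \cite[Theorem~4.1(i)]{plh2}. Had you stopped there, you would have been done.

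The genuine gap is in your third paragraph, and it comes from misreading what membership in $\DM^{\Q-1}_{\et}(S,\Z)$ requires. By definition this category only constrains the \emph{rationalization}: $M$ lies in it as soon as $M\otimes_\Z\Q$ is a geometric $1$-motive; no integral geometricity or constructibility of $M$ itself is demanded, and $\tau^{\leqslant 0}M$ automatically remains an Ind-$1$-motive because the t-structure lives on $\DM^{\mathrm{ind}1}_{\et}(S,\Z)$. Consequently the torsion analysis via \Cref{torsioninduced} and, above all, the ``assembly step'' you single out as the main obstacle — reconstructing $\tau^{\leqslant 0}M$ integrally from its rational and torsion pieces so as to land in the constructible category — are not needed for this statement. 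Worse, that step is not just superfluous but hopeless as stated: with integral coefficients, truncations do \emph{not} preserve geometric objects in general (this is exactly the pathology the paper recalls for $\omega^0$, and it is why $\DM^{\Q-1}$ is introduced as the larger category in which truncations can be taken); the fact that truncations of genuine $1$-motives are again geometric is only established much later, in \Cref{ordinary is trop bien}, under additional hypotheses and by a substantially harder argument. So as written your proof is incomplete precisely at the point you flag, and the missing step is one you should delete rather than fill in.
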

\begin{proof} Let $M$ be a rationally geometric $1$-motive. \cite[Lemma~1.1.5]{AM1} implies that $$\tau^{\leqslant 0}(M)\otimes_\Z \Q=\tau^{\leqslant 0}(M\otimes_\Z \Q).$$
But $\tau^{\leqslant 0}(M\otimes_\Z \Q)$ is a $1$-motive by \cite[Theorem~4.1(i)]{plh2}.
\end{proof}

\begin{proposition}\label{f^* ordinary} Let $S$ be an excellent scheme allowing resolution of singularities by alterations and let $f\colon T\rar S$ be a morphism of schemes.

Then, the functor $$f^*\colon \DM_{\et}^{\Q-1}(S,\Z)\to \DM_{\et}^{\Q-1}(T,\Z)$$ is t-exact when both sides are endowed with the motivic t-structure.
\end{proposition}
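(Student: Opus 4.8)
The plan is to reduce the t-exactness of $f^*$ to properties we have already established, exploiting the conservativity of the $\ell$-adic realizations on $1$-motives (\Cref{conservativityconj}) and the interplay with torsion coefficients from \Cref{torsioninduced}. First I would recall that $f^*$ is right t-exact by \Cref{t-adj}, so the only thing to prove is that $f^*$ preserves non-negative objects, i.e.\ sends $\DM_{\et}^{\Q-1}(S,\Z)^{\geqslant 0}$ into $\DM_{\et}^{\Q-1}(T,\Z)^{\geqslant 0}$. (That $f^*$ preserves the subcategory $\DM_{\et}^{\Q-1}$ itself is clear since $f^*$ commutes with $-\otimes_\Z\Q$ and preserves $1$-motives by \Cref{f_!} and the fact that the $1$-motive subcategories are stable under $f^*$.) Equivalently, if $M\in\DM_{\et}^{\Q-1}(S,\Z)$ satisfies $\tau^{\leqslant -1}M=0$ — that is, $\tau^{\leqslant 0}M\simeq \tau^{\leqslant -1}M[?]$, more precisely $M$ is $\geqslant 0$ — then $f^*M$ is $\geqslant 0$.

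The key step is a two-part decomposition of the question into a rational part and a torsion part. For the rational part: tensoring with $\Q$ commutes with $f^*$ and with truncations on rationally geometric $1$-motives by \cite[Lemma~1.1.5]{AM1} (as used in the proof of \Cref{Q-cons}), so $(f^*M)\otimes_\Z\Q = f^*(M\otimes_\Z\Q)$, and the latter is non-negative because the rational motivic t-structure is compatible with pullback by \cite[Theorem~4.1]{plh2}. For the torsion part: I would use that for any prime $\ell$, $M\otimes_\Z\Z/\ell\Z$ is supported on $S[1/\ell]$ (by \cite[Proposition~A.3.4]{em}) and, via the rigidity theorem \cite[Corollary~3.2]{bachmannrigidity}, is of the form $\rho_!(K)$ for $K\in\D_{\ell^\infty}(S[1/\ell]_{\et},\Z/\ell\Z)$; then \Cref{torsioninduced} says $\rho_!$ is t-exact, so non-negativity of $M\otimes_\Z\Z/\ell\Z$ is equivalent to non-negativity of $K$ in the ordinary t-structure on torsion étale sheaves, which is manifestly stable under pullback. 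Thus $f^*(M\otimes_\Z\Z/\ell\Z)=(f^*M)\otimes_\Z\Z/\ell\Z$ is non-negative as well.

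To assemble these, I would invoke conservativity: a $1$-motive $N$ (here $N=\tau^{\leqslant -1}(f^*M)$, which is again a rationally geometric $1$-motive, in fact a genuine $1$-motive after $\otimes\Q$, and whose torsion is controlled) vanishes iff $N\otimes_\Z\Q$ vanishes on each $S[1/\ell]$ and $N\otimes_\Z\Z/\ell\Z$ vanishes for all $\ell$, by the argument of \Cref{conservativityconj}. Since $f^*$ is right t-exact, $\tau^{\leqslant -1}(f^*M)$ sits in degrees $\leqslant -1$; I want to show it is zero. Applying $-\otimes_\Z\Q$: because $\otimes\Q$ is t-exact on rationally geometric $1$-motives and commutes with $f^*$, $\tau^{\leqslant -1}(f^*M)\otimes\Q = \tau^{\leqslant -1}(f^*(M\otimes\Q))=0$ by the rational case. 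Applying $-\otimes_\Z\Z/\ell\Z$ and using that this is right t-exact (it is a cofiber of multiplication by $\ell$, so shifts degrees by at most $+1$... one must be a little careful here) together with the torsion-sheaf computation above, one gets that $\tau^{\leqslant -1}(f^*M)\otimes_\Z\Z/\ell\Z$ is concentrated in a range forcing it to vanish once we know $f^*M\otimes\Z/\ell\Z$ is non-negative. Hence $\tau^{\leqslant-1}(f^*M)=0$ and $f^*M$ is non-negative, as desired.

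The main obstacle I anticipate is the torsion bookkeeping: $-\otimes_\Z\Z/\ell\Z$ is only right t-exact with a possible shift, so one cannot naively conclude non-negativity of $f^*M$ from non-negativity of its reductions without first isolating $\tau^{\leqslant -1}(f^*M)$ and showing its rational and mod-$\ell$ pieces both vanish. The cleanest route is probably: set $C=\tau^{\leqslant -1}(f^*M)$, show directly $C\otimes\Q=0$ (rational case) and $C\otimes\Z/\ell\Z=0$ for every $\ell$ (using that $C\otimes\Z/\ell\Z$ is a retract-in-a-triangle built from $(f^*M)\otimes\Z/\ell\Z$ and $\tau^{\leqslant 0}(f^*M)\otimes\Z/\ell\Z$, both of which land in degrees $\geqslant -1$ after the shift, while $C$ itself is $\leqslant -1$, pinning everything to degree $-1$, and then rigidity plus \Cref{torsioninduced} identify this with a pullback of a non-negative torsion sheaf, forcing it to be zero), and finally conclude $C=0$ by \Cref{conservativityconj}. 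Making the degree estimates in the mod-$\ell$ step precise — keeping track of the single shift introduced by $-\otimes\Z/\ell\Z$ — is the one genuinely delicate point; everything else is an application of results already in hand.
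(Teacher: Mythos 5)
Your overall strategy (right t-exactness from \Cref{t-adj}, then a rational/torsion dévissage for left t-exactness) is the same as the paper's, but your assembly of the torsion part has a genuine gap. The paper avoids your bookkeeping entirely: it tensors $M$ with the triangle $M\otimes_\Z\Q/\Z[-1]\to M\to M\otimes_\Z\Q$, notes that $M\otimes_\Z\Q$ is non-negative (by \cite[Lemma~1.1.5]{AM1}), hence so is $M\otimes_\Z\Q/\Z[-1]$ by rotating, and then applies $f^*$: the rational piece stays non-negative by \cite[Theorem~4.1]{plh2} and the torsion piece stays non-negative by \cite[Proposition~2.2.5]{AM1} (pullback preserves non-negativity of torsion motives, the fact you are trying to reconstruct from rigidity and \Cref{torsioninduced}); non-negativity of $f^*(M)$ then follows because $\mathrm{DM}^{\mathrm{ind}1}_{\et}(T,\Z)^{\geqslant 0}$ is closed under extensions. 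No truncation of $f^*M$, no conservativity, no mod-$\ell$ shifts.

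The gap in your version is the step where you claim $C\otimes_\Z\Z/\ell\Z=0$ for $C=\tau^{\leqslant -1}(f^*M)$. Your degree estimates are correct as far as they go, but they only pin $C\otimes_\Z\Z/\ell\Z$ to a single cohomological degree; being concentrated in degree $-1$ does not force vanishing, and the reason you give --- that rigidity and \Cref{torsioninduced} identify it with ``a pullback of a non-negative torsion sheaf'' --- is not accurate: $C$ is a truncation formed over $T$, not a pullback from $S$, so nothing identifies $C\otimes_\Z\Z/\ell\Z$ with $f^*$ of anything. (Also note that \Cref{torsioninduced} literally gives only one direction, t-exactness of $\rho_!$; the converse you use, that motivic non-negativity of a torsion object implies ordinary non-negativity of the corresponding sheaf complex, needs the rigidity equivalence plus a short truncation argument.) The cleanest repair is to observe that $C\otimes_\Z\Q=0$ forces $C\simeq C\otimes_\Z\Q/\Z[-1]$, then tensor the triangle $C\to f^*M\to\tau^{\geqslant 0}(f^*M)$ with $\Q/\Z[-1]$ and use that $f^*$ preserves non-negative torsion motives to get $C\geqslant 0$, hence $C=0$ --- but at that point you have reproduced the paper's $\Q/\Z[-1]$ argument, and the detour through $C$, mod-$\ell$ reductions and the conservativity of $(-\otimes_\Z\Q,-\otimes_\Z\Z/\ell\Z)$ is superfluous.
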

\begin{proof} The proof is the same as \cite[Proposition~4.2.5]{AM1}. We already know that the functor $f^*$ is right t-exact. Let $M$ be a rationally geometric $1$-motive which is t-non-negative. We have an exact triangle $$M\otimes_\Z \Q/\Z[-1]\rar M\rar M\otimes_\Z\Q .$$

By \cite[Lemma~1.1.5]{AM1}, the motive $M\otimes_\Z\Q$ is t-non-negative. Thus, the motive $M\otimes_\Z \Q/\Z[-1]$ is also t-non-negative.
Furthermore, we have an exact triangle $$f^*(M\otimes_\Z \Q/\Z[-1])\rar f^*(M)\rar f^*(M\otimes_\Z\Q).$$ By \cite[Theorem~4.1]{plh2}, the motive $f^*(M\otimes_\Z\Q)$ is t-non-negative. In addition, by \cite[Proposition~2.2.5]{AM1}, the motive $f^*(M\otimes_\Z \Q/\Z[-1])$ is t-non-negative. Hence, the motive $f^*(M)$ is also t-non-negative.
\end{proof}

\begin{corollary}\label{reducingtofields}
    Let $S$ be an excellent scheme allowing resolution of singularities by alterations and let $\Lambda$ be a localization of $\Z$.
Let $M$ be in $\DM^1_{\et}(S,\Lambda)$. Then, the following conditions are equivalent. \begin{enumerate}
        \item The $1$-motive $M$ is t-non-negative (resp. t-non-positive).
        \item For any $x$ in $S$, the $1$-motive $i_x^*(M)$ is t-non-negative (resp. t-non-positive).
        \item For any prime number $\ell$, the complex $\overline{\rho}_\ell(M)$ is t-non-negative (resp. t-non-positive).
    \end{enumerate}
\end{corollary}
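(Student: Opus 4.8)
The plan is to prove the cycle of implications $(1)\Rightarrow(2)\Rightarrow(3)\Rightarrow(1)$, reducing everything to facts already established: the compatibility of the motivic t-structure with pullbacks (\Cref{f^* ordinary}), the t-exactness of $\overline\rho_\ell$ on rationally geometric $1$-motives, and the conservativity of the family $\{\overline\rho_\ell\}$ (\Cref{conservativityconj}). Throughout I treat only the t-non-negative case; the t-non-positive case is formally dual (or follows by applying the non-negative case after rotating the relevant triangles), and I would say so in one sentence rather than repeat the argument. Note first that a $1$-motive is in particular a rationally geometric $1$-motive, so all the results cited about $\DM_{\et}^{\Q-1}$ apply.

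\textbf{$(1)\Rightarrow(2)$.} For $x\in S$ with immersion $i_x\colon\Spec(k(x))\to S$, write $i_x$ as the composite of a locally closed immersion into $S$ and hence as a pullback along an arbitrary morphism of schemes; by \Cref{f^* ordinary}, the functor $i_x^*\colon\DM_{\et}^{\Q-1}(S,\Z)\to\DM_{\et}^{\Q-1}(k(x),\Z)$ is t-exact, so it preserves t-non-negativity. (After tensoring with $\Lambda$ — a localization of $\Z$, hence flat — this remains true because $-\otimes_\Z\Lambda$ is t-exact for the motivic t-structure by the same kind of argument as in \Cref{Q-cons}, using \cite[Lemma~1.1.5]{AM1}.)

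\textbf{$(2)\Rightarrow(3)$.} Fix a prime $\ell$. The reduced realization $\overline\rho_\ell$ factors through restriction to $S[1/\ell]$, and for $M$ a $1$-motive the realization $\rho_\ell\colon\DM_{\et}^{1}(S[1/\ell],\Lambda)\to\D_{\mathrm{cons}}(S[1/\ell],\Lambda\otimes_\Z\Z_\ell)$ is t-exact: this is the content of the rational statement of \cite[Theorem~4.1(iv)]{plh2} combined with \Cref{torsioninduced} for the torsion part, assembled exactly as in the proof of \Cref{f^* ordinary}. Hence $\overline\rho_\ell(M)$ is t-non-negative as soon as $M$ is. But t-non-negativity of a constructible $\ell$-adic complex can be checked on stalks at points of $S[1/\ell]$, i.e. after pulling back along the various $i_x$; since $\rho_\ell$ commutes with pullback and $\overline\rho_\ell(i_x^*M)$ is t-non-negative by hypothesis~$(2)$, we conclude. (Concretely one only needs the implication "$i_x^*M$ t-non-negative for all $x$ $\Rightarrow$ $M$ t-non-negative after realization", which is immediate from the pointwise description of the ordinary t-structure on $\D_{\mathrm{cons}}$.)

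\textbf{$(3)\Rightarrow(1)$.} This is where the real work is, and it is the step I expect to be the main obstacle. Let $M$ be a $1$-motive and suppose $\overline\rho_\ell(M)$ is t-non-negative for every $\ell$. Form the truncation triangle $\tau^{\leqslant -1}M\to M\to\tau^{\geqslant 0}M$ for the motivic t-structure on $\DM_{\et}^{\Q-1}(S,\Z)$ (which exists by \Cref{Q-cons}; note both truncations are again rationally geometric $1$-motives, and in fact genuine $1$-motives — for the $\Lambda$-linear statement one uses that $\DM_{\et}^1$ is thick and stable under the truncations by \Cref{bvkvsord} fibrewise together with $(2)\Leftrightarrow$ membership, but the cleanest route is simply to work in $\DM^{\Q-1}$ and only at the end invoke that $M$ being a $1$-motive forces its truncations to be). Applying $\overline\rho_\ell$, which is t-exact on $1$-motives by the previous paragraph, gives $\overline\rho_\ell(\tau^{\leqslant -1}M)$ t-non-positive-of-degree-$\leqslant -1$ and $\overline\rho_\ell(\tau^{\geqslant 0}M)$ t-non-negative; combined with $\overline\rho_\ell(M)$ t-non-negative and the long exact sequence of cohomology sheaves, one forces $\overline\rho_\ell(\tau^{\leqslant -1}M)=0$ for every $\ell$. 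By the conservativity of the reduced realizations on $1$-motives (\Cref{conservativityconj}) this yields $\tau^{\leqslant -1}M=0$, i.e. $M$ is t-non-negative. The only subtlety to handle with care is that \Cref{conservativityconj} is stated for $\DM_{\et}^1$, so one must know a priori that $\tau^{\leqslant -1}M$ is an honest $1$-motive and not merely rationally geometric; I would dispose of this by first running the whole argument with $\Lambda=\Z$ and $M\in\DM^1_{\et}(S,\Z)$, observing that $\tau^{\leqslant-1}M$ is a $1$-motive because, by $(2)$ already proved, its fibres at all points are $1$-motives in the heart, which are themselves objects of $\DM^1_{\et}(k(x),\Z)$ by \Cref{bvkvsord}, and then one concludes $\tau^{\leqslant-1}M\in\DM^1_{\et}(S,\Z)$ by the same continuity/constructibility argument used at the end of the proof of \Cref{Delignemotare1mot}; the general $\Lambda$ follows by tensoring with $\Perf_\Lambda$.
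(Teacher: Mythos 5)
Your steps $(1)\Rightarrow(2)$ and (in outline) $(2)\Rightarrow(3)$ are reasonable, but the step $(3)\Rightarrow(1)$ contains a genuine gap, and since your cycle routes $(2)\Rightarrow(1)$ through it, the whole equivalence collapses on this point. To apply \Cref{conservativityconj} to $\tau^{\leqslant -1}M$ you must know that this truncation is a \emph{geometric} $1$-motive over $S$ (the conservativity of the reduced realizations is proved only on $\DM^1_{\et}$, and its proof genuinely uses constructibility). Your justification --- the fibres $i_x^*\tau^{\leqslant -1}M$ are $1$-motives by \Cref{bvkvsord}, hence $\tau^{\leqslant -1}M$ is one ``by the continuity argument at the end of \Cref{Delignemotare1mot}'' --- does not work: continuity only spreads out a geometric object from the generic point (a cofiltered limit of opens) to some open neighbourhood, and gives no criterion of the form ``geometric fibres at every point imply geometric''. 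With integral coefficients such pointwise criteria fail badly (this is exactly the pathology of $\omega^0$ recalled in the introduction), and the assertion that truncations of $1$-motives are again $1$-motives is precisely the main theorem \Cref{ordinary is trop bien}, which is proved \emph{after} this corollary, only for $\Q$-schemes and Dedekind schemes, and only by means of the whole Deligne-$1$-motive machinery of the last section. Your route is therefore circular, and it could not in any case cover the stated generality (excellent schemes allowing resolution by alterations).

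The paper circumvents this in two ways that you should compare with your draft. For the equivalence with $(2)$ it does not use \Cref{conservativityconj} at all, but \Cref{conservativepts}: the family of pullbacks to geometric points is conservative on motives with \emph{bounded below torsion}, a statement that requires no geometricity and applies to the truncations of a $1$-motive (their torsion is bounded below because $M\otimes_\Z\Z/p\Z$ is bounded, via \Cref{torsioninduced} and rigidity); combined with the t-exactness of $i_x^*$ from \Cref{f^* ordinary} this gives $(1)\Leftrightarrow(2)$. The comparison with the $\ell$-adic realization is then carried out over a field, where \Cref{bvkvsord} guarantees both that the t-structure restricts to $\DM^1_{\et}(k,\Lambda)$ (so conservativity arguments may legitimately be applied to the truncation there) and that it is bounded, so that t-exactness of the realization reduces to ``heart goes to heart'', which is \Cref{Tatemodule}; this replaces your direct assembly of the rational and torsion cases over $S$. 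A smaller point: you invoke full t-exactness of $\overline{\rho}_\ell$ but only sketch left t-exactness; right t-exactness would also need a word. If you keep your structure, the fix is to replace the appeal to \Cref{conservativityconj} over $S$ by \Cref{conservativepts} applied to $\tau^{\leqslant -1}M$, or to reduce to the field case before truncating.
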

\begin{rem}
    The same ideas would work over any flat $\Z$-algebra, provided that we prove that the method of \cite{plh2} works with $\Lambda\otimes_\Z \Q$-coefficients for any $\Lambda$. This is indeed true by direct inspection of all the necessary proofs in \cite{plh} which we invite the interested reader to check.
\end{rem}
\begin{proof}
First, if $M$ is bounded below, it has bounded below torsion in the sense of \Cref{conservativepts} below as $M\otimes_\Z \Z/p\Z$ is also bounded below with respect to the motivic t-structure and hence $\rho^!(M\otimes_\Z \Z/p\Z)$ is also bounded below with respect to the ordinary t-structure by \Cref{torsioninduced} and the rigidity theorem \cite[Corollary~3.2]{bachmannrigidity}.

Now, the family $(i_x^*)_{x\in S}$ is conservative bounded below torsion objects by \Cref{conservativepts}; \Cref{f^* ordinary} ensures that it is made of t-exact functors proving the first point. Proving the second point therefore reduces to the case of a point $S=\Spec(k)$. In that case, \Cref{bvkvsord} shows that the ordinary t-structure is bounded and therefore it suffices to show that the $\ell$-adic realization functor sends the heart to the heart. This is true because the Tate module of a Deligne motive with torsion $M$ coincides with $\rho_\ell\Phi_S(M)$ and the result thus follows from \Cref{Tatemodule}.
\end{proof}

\begin{lemma}\label{conservativepts}Let $S$ be a scheme. We denote by $\DM_{\et}^\mathrm{bbt}(S,\Z)$ the full subcategory of $\DM_{\et}(S,\Z)$ made of those motives $M$ such that each $M\otimes_\Z \Z/p\Z$ is bounded below when seen as a complex of étale sheaves through $\rho^!$ (which is an equivalence on $p$-torsion objects by the rigidity theorem \cite[Corollary~3.2]{bachmannrigidity}). We call it the category of \emph{étale motives with bounded below torsion}. 

The family formed by the
    \[\xi^* \colon \DM_{\et}^\mathrm{bbt}(S,\Z)\to \DM_{\et}^\mathrm{bbt}(\Omega,\Z)\] for $\xi\colon \Spec(\Omega)\to S$ a geometric point
    is conservative.
\end{lemma}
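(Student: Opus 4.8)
The plan is to argue by noetherian induction on $S$, reducing via the localization triangle \eqref{localization} to the case of a field, and to treat the field case by combining étale descent with the continuity property of $\DM_{\et}$. First I would record the routine preliminaries: étale motives are insensitive to nilpotents, so one may assume $S$ reduced; and the functors $\xi^*$, $j^*$, $i_*$, $j_!$ used below all preserve the subcategory $\DM_{\et}^{\mathrm{bbt}}$, because they commute with $-\otimes_\Z\Z/p\Z$ and, the rigidity theorem \cite[Corollary~3.2]{bachmannrigidity} being compatible with pullbacks, with $\rho^!$ on $p$-torsion objects, while $\xi^*$ is $t$-exact on derived categories of étale sheaves over separably closed fields. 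This is essentially all the hypothesis $\mathrm{bbt}$ contributes here, although it also allows, if one prefers, routing the torsion part of the argument through rigidity, reducing it to the classical fact that a bounded below complex of étale sheaves with vanishing geometric stalks is zero.

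The one step with real content is the case $S=\Spec k$ with $k$ a field: if $N\in\DM_{\et}(k,\Z)$ has $N_{\overline k}=0$ for $\overline k$ a separable closure, then $N=0$. Here $\Spec\overline k$ is a cofiltered limit, with affine transition maps, of the spectra of the finite separable subextensions $k'/k$, so continuity of $\DM_{\et}(-,\Z)$ gives $\DM_{\et}(\overline k,\Z)=\colim_{k'}\DM_{\et}(k',\Z)$; as the colimit is filtered, the vanishing of $N_{\overline k}$ already forces $N_{k'}=0$ for some finite separable $k'/k$. Since $\Spec k'\to\Spec k$ is an étale cover and $\DM_{\et}(-,\Z)$ satisfies étale descent, pullback along it is conservative, and therefore $N=0$.

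For the inductive step, let $M\in\DM_{\et}^{\mathrm{bbt}}(S,\Z)$ satisfy $\xi^*M=0$ for every geometric point $\xi$, and assume the statement known over all proper closed subschemes of $S$. For each generic point $\eta$ of $S$ the geometric fibre of $i_\eta^*M\in\DM_{\et}(\kappa(\eta),\Z)$ — which is $\xi^*M$ for $\xi$ a geometric point over $\eta$ — vanishes, so the field case gives $i_\eta^*M=0$; since $\kappa(\eta)$ is a filtered colimit of rings of functions on affine open neighbourhoods of $\eta$, continuity produces such a neighbourhood $V_\eta$ with $M|_{V_\eta}=0$. Then $U=\bigcup_\eta V_\eta$ is a dense open subscheme with $M|_U=0$ by Zariski descent, and writing $i\colon Z\hookrightarrow S$ for the reduced complement — a proper closed subscheme — the localization triangle $j_!j^*M\to M\to i_*i^*M$ forces $M\simeq i_*i^*M$. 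The geometric fibres of $i^*M$ are among those of $M$, hence vanish, so the inductive hypothesis yields $i^*M=0$ and thus $M=0$.

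I expect the field case to be the only genuinely non-formal point, the remainder being dévissage along localization triangles. The things to check are that continuity of $\DM_{\et}(-,\Z)$ and étale descent are available in the required generality (arbitrary noetherian finite-dimensional base, integral coefficients), which is part of the standard package for the étale motivic six-functor formalism, and that the passage from $i_\eta^*M=0$ to $M|_{V_\eta}=0$ — an object of a filtered colimit of $\infty$-categories that becomes zero is already zero at a finite stage — is applied correctly.
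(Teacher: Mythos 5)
Your reduction to the field case by noetherian induction does not work at this level of generality, because both continuity arguments you invoke are only available for \emph{constructible} (geometric) motives, whereas $\DM_{\et}^{\mathrm{bbt}}(S,\Z)$ consists of arbitrary objects of the big category subject only to the bounded-below-torsion condition; this generality is the point of the lemma, which the paper applies to truncations of $1$-motives that are a priori only Ind-$1$-motives. Concretely, the spreading-out step ``$i_\eta^*M=0$, hence $M|_{V_\eta}=0$ for some affine open neighbourhood $V_\eta$ of $\eta$'' fails for non-constructible $M$: take $S$ a curve, $\ell$ invertible on $S$, and $M=\rho_!\bigl(\bigoplus_{n\geqslant 0}(i_{x_n})_*\Z/\ell\Z\bigr)$ for an infinite family of closed points $x_n$. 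This is a torsion object concentrated in a single degree, hence lies in $\DM_{\et}^{\mathrm{bbt}}(S,\Z)$; its pullback to the generic point vanishes (pullback commutes with direct sums and kills each skyscraper), yet its restriction to every dense open subset is nonzero. So the dense open $U$ with $M|_U=0$ that drives your induction need not exist, and the induction never gets started. The same objection applies to your field case: the implication ``$N_{\bar k}=0$ forces $N_{k'}=0$ at a finite stage'' rests on $\DM_{\et}(\bar k,\Z)\simeq\colim_{k'}\DM_{\et}(k',\Z)$, which holds for the constructible subcategories but not for the big categories, so for a non-compact $N$ no finite-stage vanishing is guaranteed. Note also that, as written, your argument makes no essential use of the bbt hypothesis, which should be a warning sign: conservativity of geometric points on all of $\DM_{\et}(S,\Z)$ is exactly what one cannot expect, because of unbounded torsion objects.

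The paper's proof avoids induction and continuity altogether. Since the family $(-\otimes_\Z\Q,\ -\otimes_\Z\Z/p\Z \mid p \text{ prime})$ is conservative, it suffices to treat rational and $p$-torsion coefficients separately. Rationally, conservativity of geometric points is quoted from \cite[Lemma~A.6]{ahplh}; with $\Z/p\Z$-coefficients, rigidity \cite[Corollary~3.2]{bachmannrigidity} identifies the objects in question with complexes of étale sheaves, which are bounded below by the bbt hypothesis, and one reduces to sheaves in the heart by a spectral sequence (truncation) argument, where vanishing is detected on geometric stalks \cite[Tag~03PU]{stacks}. If you want to salvage your structure, you would have to replace the spreading-out step by such a stalkwise argument on the torsion part and quote the rational statement anyway, at which point you have essentially reproduced the paper's proof.
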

\begin{proof}
    Since the family $(-\otimes_\Z \Q,-\otimes_\Z \Z/p\Z \mid p\text{ prime})$ is conservative, it suffices to show the same lemma with coefficients $\Q$ of $\Z/p\Z$. The case of rational coefficients is \cite[Lemma~A.6]{ahplh} while the case of bounded below complexes of étale sheaves reduces to objects of the heart by a spectral sequence argument in which case it follows from \cite[Tag~03PU]{stacks}.
\end{proof}

\begin{corollary}\label{Deligne1motareinheart}
    Let $S$ be a scheme and let $\Lambda$ be flat $\Z$-algebra. Then the functor $\Phi_S$ has its essential image included in the heart of the motivic t-structure.
\end{corollary}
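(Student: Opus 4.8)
The plan is to reduce to $\Lambda=\Z$, to observe that $\Phi_S(M)$ is automatically t-non-positive, and to deduce t-non-negativity from the corresponding facts with rational and with mod-$\ell$ coefficients. For the reduction to $\Lambda=\Z$: every object of $\rmm_1^\D(S,\Lambda)$ is of the form $N\otimes_\Z\Lambda$ with $N\in\widetilde{\rmm}_1^\D(S,\Z)$, and since $L_\D$, $L_{\AAA}$ and $\Sigma^\infty$ all commute with $-\otimes_\Z\Lambda$ one has $\Phi_S(N\otimes_\Z\Lambda)\simeq\Phi_S(N)\otimes_\Z\Lambda$; moreover the functor $-\otimes_\Z\Lambda\colon\DM^{\mathrm{ind}1}_{\et}(S,\Z)\to\DM^{\mathrm{ind}1}_{\et}(S,\Lambda)$ is t-exact for the motivic t-structures, because it sends the generators $f_\sharp\Phi_X(N')$ to the generators $f_\sharp\Phi_X(N'\otimes_\Z\Lambda)$ (hence is right t-exact) while its right adjoint, restriction of scalars, sends such a generator to a filtered colimit of finite direct sums of the t-non-positive objects $f_\sharp\Phi_X(N')$ -- using that $\Lambda$ is $\Z$-flat -- hence to a t-non-positive object (so it is also left t-exact). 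This lets us assume $\Lambda=\Z$.

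By construction $\Phi_S(M)$ lies in $\mathcal{DG}_S$, hence is t-non-positive, so the point is to prove that $X:=\Phi_S(M)$ is t-non-negative. I claim this follows from: \emph{(a)} $X\otimes_\Z\Q$ lies in the heart of the rational motivic t-structure; \emph{(b)} for every prime $\ell$, the cone $X/\ell$ of multiplication by $\ell$ on $X$ satisfies $\HH^m(X/\ell)=0$ for all $m\leqslant-2$. Granting these: the generators of the motivic t-structure are compact (they are smooth $1$-motives by \Cref{Delignemotare1mot}), so the motivic t-structure is compatible with filtered colimits, and \cite[Lemma~1.1.5]{AM1} gives $(\tau^{\leqslant-1}X)\otimes_\Z\Q=\tau^{\leqslant-1}(X\otimes_\Z\Q)=0$ by (a); hence each $\HH^m(X)$ with $m\leqslant-1$ is a torsion object of the heart. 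On the other hand the triangle $X\xrightarrow{\ell}X\to X/\ell$ yields, for each $m$, a short exact sequence $0\to\HH^{m-1}(X)/\ell\to\HH^{m-1}(X/\ell)\to{}_\ell\HH^{m}(X)\to0$ whose middle term vanishes for $m\leqslant-1$ by (b), so $\HH^m(X)$ is $\ell$-torsion-free for every $\ell$. A torsion-free torsion object vanishes, so $\HH^m(X)=0$ for all $m\leqslant-1$, and since $X$ is t-non-positive, $X$ lies in the heart.

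To prove (b): the cone $X/\ell$ is $\ell$-torsion, hence supported on $S[1/\ell]$ by \cite[Proposition~A.3.4]{em}; using the localization triangle and the compatibility of $\Phi$ with pullback (\Cref{f^*}, and the fact that $\Sigma^\infty L_{\AAA}$ commutes with pullback) one reduces to the case where $\ell$ is invertible on $S$. Writing $L_\D(M)=[\rho_\sharp(L)\to G]$, a complex in cohomological degrees $[0,1]$, the complex $L_\D(M)\otimes_\Z\Z/\ell\Z$ is the image under $\rho_\sharp$ of an $\ell$-torsion complex $K$ of small étale sheaves on $S$: indeed $\rho_\sharp(L)\otimes_\Z\Z/\ell\Z$ already lies in the essential image of $\rho_\sharp$, and $G\otimes_\Z\Z/\ell\Z\simeq({}_\ell G)[1]$ with ${}_\ell G$ finite étale, multiplication by $\ell$ being an epimorphism of étale sheaves on the semi-abelian scheme $G$; hence $X/\ell\simeq\rho_!(K)$. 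Since $\times\ell$ is an epimorphism on $G$, it is one on the quotient $\HH^1(L_\D(M))=\coker(\rho_\sharp(L)\to G)$, which together with $L_\D(M)$ being concentrated in degrees $[0,1]$ forces $K$ to sit in cohomological degrees $[-1,0]$; the t-exactness of $\rho_!$ on $\ell$-torsion complexes (\Cref{torsioninduced}) then gives that $X/\ell=\rho_!(K)$ lies in degrees $[-1,0]$, which is (b).

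Assertion (a) is exactly the rational case of the corollary. For $S$ a noetherian $\Q$-scheme of finite dimension or a Dedekind scheme it is part of Haas' theorem recalled in the introduction (\cite{haas}); for an arbitrary noetherian $S$ it will follow from this by a continuity argument reducing to schemes of finite type over $\Z$ and invoking \cite{plh2}. (When $S$ is in addition excellent and admits resolution of singularities by alterations, one may skip (a) and (b) and reduce at once to the case $S=\Spec(k)$ of a field via \Cref{reducingtofields}, where the statement is contained in \Cref{bvkvsord}.) I expect the main obstacle to be precisely this rational input: the remaining steps are formal bookkeeping, but the fact that $\Phi$ lands in the heart with rational coefficients has to be imported from the rational theory of \cite{haas,plh2}.
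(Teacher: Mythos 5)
Your parenthetical aside is in fact the paper's entire proof: since $\Phi_S(M)$ is a (smooth) $1$-motive by \Cref{Delignemotare1mot} and $\Phi$ commutes with pullbacks (\Cref{f^*}), one reduces to $\Lambda=\Z$ and then to the spectrum of a field by \Cref{reducingtofields}, where \Cref{bvkvsord} concludes. Your main route (non-positivity is free; negative cohomology is torsion by the rational statement and $\ell$-torsion-free by a mod-$\ell$ bound) is plausible in outline, but as written it has real gaps. The most serious one is the reduction of (b) to the case where $\ell$ is invertible on $S$: from \cite[Proposition~A.3.4]{em} and localization you get $X/\ell\simeq j_!j^*(X/\ell)$ with $j^*(X/\ell)$ concentrated in degrees $[-1,0]$ over $S[1/\ell]$, but $j_!$ is only \emph{right} t-exact (\Cref{t-adj}), so the lower bound does not transfer through the localization triangle; controlling exactly this kind of boundary term is what the $\omega^1$/absolute-purity argument of \Cref{embeddingDM1} exists for. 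The step can be repaired: $X/\ell$ is a $\Z/\ell$-module, hence so is $i^*j_*j^*(X/\ell)$, which therefore vanishes over the characteristic-$\ell$ locus by \cite[Proposition~A.3.4]{em}; thus $j_!j^*(X/\ell)\simeq j_*j^*(X/\ell)$, and the bound follows because $j_*$ is left t-exact (adjoint to the right t-exact $j^*$). But this observation is not in your write-up, and "using the localization triangle" does not supply it.

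Two further points. First, your appeal to compactness of the generators is incorrect in general: with integral coefficients, geometric (even dualizable) étale motives need not be compact, and the paper never uses compactness — it substitutes statements like \cite[Proposition~1.2.4(5)]{AM1} for the specific filtered colimits it needs (see the proof of \Cref{torsioninduced}); your uses of "compatibility with filtered colimits" (to kill a torsion-free rationally trivial heart object, and in the $\Lambda$-reduction) need that kind of substitute. Relatedly, the $\Lambda$-reduction inference is the wrong adjunction direction: right t-exactness of restriction of scalars does not give left t-exactness of $-\otimes_\Z\Lambda$; instead test $\Phi_S(N)\otimes_\Z\Lambda$ against the $\Lambda$-linear generators, which are of the form $G\otimes_\Z\Lambda$, and use adjunction together with Lazard's theorem (again with a colimit-commutation input). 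Second, your assertion (a) is imported rather than proved, and the "continuity argument" for arbitrary noetherian $S$ is not carried out; under the hypotheses of \Cref{reducingtofields} (which the paper's own proof tacitly assumes) it is available from \cite{plh,plh2}, but at that point the two-line field reduction you mention in parentheses already finishes the proof, so the extra machinery of (a)–(b) buys nothing beyond that case.
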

\begin{proof}
    We can assume that $\Lambda=\Z$. By \Cref{reducingtofields}, we can further assume $S$ to be the spectum of a field in which case it follows from \Cref{bvkvsord}.
\end{proof}

    
\begin{corollary}\label{bounded}
   Let $S$ be an excellent scheme allowing resolution of singularities by alterations and let $\Lambda$ be a localization of $\Z$. Then the objects of $\DM^1_{\et}(S,\Lambda)$ are bounded with respect to the motivic t-structure.
\end{corollary}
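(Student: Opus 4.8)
The strategy is to convert t-boundedness into a bound on the amplitude of the $\ell$-adic realizations which is \emph{uniform in $\ell$}, and then to read such a bound off the generators. Fix $M\in\DM^1_\et(S,\Lambda)$. Since $\DM^1_\et(S,\Lambda)$ is stable under shifts, \Cref{reducingtofields} applies to $M[a]$ and to $M[b]$ for any integers $a\leqslant b$; combined with the fact that $\overline{\rho}_\ell$ commutes with shifts, the equivalence $(1)\Leftrightarrow(3)$ of \Cref{reducingtofields} shows that $M$ is bounded as soon as one can find integers $a\leqslant b$ such that $\overline{\rho}_\ell(M)$ is concentrated in cohomological degrees $[a,b]$ for \emph{every} prime $\ell$.

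To produce such $a$ and $b$ I would reduce to the generators. By \Cref{def AM} the category $\DM^1_\et(S,\Lambda)$ is the thick subcategory of $\DM_\et(S,\Lambda)$ generated by the objects $f_\sharp\Lambda_X$ with $f\colon X\to S$ smooth of relative dimension at most $1$; hence the fixed object $M$ is a retract of an object built from a \emph{finite} list of shifted generators $f_{i\sharp}\Lambda_{X_i}[n_i]$ by \emph{finitely many} cofibre sequences. For complexes of (étale or $\ell$-adic) sheaves, a cofibre sequence lowers the bottom of the cohomological amplitude by at most $1$ and does not raise its top, while passing to a direct summand can only shrink the amplitude; as $\overline{\rho}_\ell$ is exact and commutes with shifts, it therefore suffices to bound the amplitude of $\overline{\rho}_\ell(f_{i\sharp}\Lambda_{X_i})$ uniformly in $\ell$, for each of the finitely many indices $i$.

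For a single generator $f_\sharp\Lambda_X$ with $f\colon X\to S$ smooth of relative dimension $d\leqslant 1$, let $j\colon S[1/\ell]\to S$ be the open immersion and $f'\colon X'\to S[1/\ell]$ the base change of $f$. Smooth base change gives $j^*f_\sharp\Lambda_X\simeq f'_\sharp\Lambda_{X'}$, relative purity gives $f'_\sharp\Lambda_{X'}\simeq f'_!\Lambda_{X'}(d)[2d]$, and since $\rho_\ell$ commutes with $f_!$, with Tate twists and with shifts we obtain $\overline{\rho}_\ell(f_\sharp\Lambda_X)\simeq f'_!(\Lambda_{X',\ell})(d)[2d]$. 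Since $\ell$ is invertible on $S[1/\ell]$ and the fibres of $f'$ have dimension at most $d$, the functor $Rf'_!$ has cohomological amplitude contained in $[0,2d]$, so $\overline{\rho}_\ell(f_\sharp\Lambda_X)$ is concentrated in degrees $[-2d,0]\subseteq[-2,0]$, independently of $\ell$. Plugging these bounds, the fixed shifts $n_i$ and the fixed number of cofibre sequences into the previous paragraph yields the desired $a$ and $b$, and hence $M$ is bounded.

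The subtle point is precisely the uniformity in $\ell$: for a fixed $\ell$ the complex $\overline{\rho}_\ell(M)$ is constructible and therefore automatically bounded, and \Cref{reducingtofields}(2) even gives boundedness of $M$ after restriction to each point of $S$ — but neither statement controls the amplitude uniformly over the (possibly infinitely many) points or primes, which is exactly what the purity and compact-support degree estimates above provide.
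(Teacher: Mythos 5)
Your proof is correct, but it verifies boundedness of the generators by a genuinely different route than the paper. Both arguments reduce, via the thick-subcategory structure of \Cref{def AM}, to the generators $f_\sharp\Lambda_X$ with $f$ smooth of relative dimension at most $1$, and both ultimately rest on \Cref{reducingtofields}; the difference is which of its criteria is used. The paper uses criterion (2): it restricts to points, passes to an algebraically closed field using \Cref{bvkvsord}, compactifies the curve, and places the generators in three consecutive degrees via the motivic decomposition of \Cref{curvedecomposition}, the t-exactness statements of \Cref{t-adj2}, and the localization triangle \eqref{localization}. You instead use criterion (3) and bound the amplitude of $\overline{\rho}_\ell(f_\sharp\Lambda_X)$ uniformly in $\ell$ by base change for $f_\sharp$, relative purity $f_\sharp\simeq f_!(d)[2d]$, and the classical bound on the cohomological dimension of $Rf'_!$ in fibre dimension $\leqslant 1$. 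Your route is more external: it imports as standard inputs the compatibility of the $\ell$-adic realization with $f_!$, Tate twists and purity (which does hold, by its construction via $\ell$-adic completion and rigidity, but is not proved in this paper) together with SGA4-style vanishing for compactly supported cohomology; in exchange it avoids \Cref{curvedecomposition}, the compactification, and the passage to algebraically closed fields, and it makes the uniformity in $\ell$ completely explicit. The paper's route stays internal to the motivic categories and reuses lemmas needed elsewhere, yielding the precise degrees of the generators. Two minor points: when invoking purity you should first split $X$ into its open-closed pieces of pure relative dimension $0$ and $1$ (the relative dimension of a smooth morphism is only locally constant); and the discrepancy between your window $[-2,0]$ and the paper's stated degrees $\{0,1,2\}$ is a matter of indexing conventions and is immaterial to your argument, which only needs some finite window uniform in $\ell$ and applies criterion (3) consistently on both sides.
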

\begin{proof}
    We reduce to the the case of objects of the form $f_\sharp(\Lambda_X)$ for $X\to S$ smooth of relative dimension at most $1$. In that case, we claim that they lie in degrees ${0,1,2}$. To prove that, we can assume $S$ to be the spectrum of a field $k$ by \Cref{reducingtofields} which we can assume to be algebraically closed by \Cref{bvkvsord}. Now, take a smooth compactification $g\colon\overline{X}\to \Spec(k)$ of $X$. As $k$ is algebraically closed, the projective curve $\overline{X}$ has a rational point hence by \Cref{curvedecomposition}, the $1$-motive $g_\sharp(\Lambda_{\overline{X}})$ is in degrees ${0,1,2}$. If $i\colon Z\to \overline{X}$ is the closed complement of $X$, then $(ig)_\sharp(\Lambda_Z)$ is in degree $0$ by \Cref{t-adj2}. The result follows using the localization triangle \eqref{localization}. 
\end{proof}

\section{The motivic t-structure on \texorpdfstring{$1$}{1}-motives}
The goal of this section is to prove that the motivic t-structure restricts to $1$-motives. This will be a consequence of the following result which generalizes \cite[Lemma~6.12]{haas} by adapting the method of \cite[Theorem~4.2.7]{AM1}.
\begin{theorem}\label{DMsm1t}
Let $S$ be a regular scheme such that all the connected components of $S$ satisfy \Cref{etendable} and let $\Lambda$ be localization of $\Z'$. Then, $\Phi_S$ induces an equivalence:
    \[\Phi_S\colon\rmm_1^\D(S,\Lambda)\xrightarrow{\sim}\DM^{\sm1}_{\et}(S,\Lambda)\cap \DM^{\mathrm{ind}1}_{\et}(S,\Lambda)^\heart.\]
In particular, the motivic t-structure induces a t-structure on $\DM^{\sm1}_\et(S,\Lambda)$.
\end{theorem}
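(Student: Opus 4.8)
The plan is to establish the two halves of the asserted equivalence separately: full faithfulness of $\Phi_S$ onto its (claimed) image, and essential surjectivity onto $\DM^{\sm1}_{\et}(S,\Lambda)\cap \DM^{\mathrm{ind}1}_{\et}(S,\Lambda)^\heart$. We already know from \Cref{Delignemotare1mot} that the essential image of $\Phi_S$ lands in $\DM^{\sm1}_{\et}(S,\Lambda)$, and from \Cref{Deligne1motareinheart} that it lands in the heart, so $\Phi_S$ does factor through the target category; it remains to show it is an equivalence. Once that is done, the final sentence is formal: since $\rmm_1^\D(S,\Lambda)$ is abelian by \Cref{abeliancat} (here we use that $S$ regular, hence normal, with components satisfying \Cref{etendable}, and $\Lambda$ a localization of $\Z'$), the intersection $\DM^{\sm1}_{\et}(S,\Lambda)\cap \DM^{\mathrm{ind}1}_{\et}(S,\Lambda)^\heart$ is an abelian subcategory of the heart closed under kernels and cokernels (computed in $\DM^{\mathrm{ind}1}_{\et}$), hence the motivic t-structure restricts to $\DM^{\sm1}_{\et}(S,\Lambda)$, exactly as in the argument sketched in the introduction after Haas' theorem and as in the proof of \cite[Theorem~4.2.7]{AM1}.

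For \textbf{full faithfulness}, I would first reduce to the generic points of $S$. Since $S$ is regular it is a finite disjoint union of connected normal schemes, so we may assume $S$ connected with generic point $\eta$. By \Cref{embeddinggeneric2} the restriction $\rmm_1^\D(S,\Lambda)\to\rmm_1^\D(\eta,\Lambda)$ is fully faithful, and on the motivic side the restriction to $\eta$ is also faithful on $1$-motives by the conservativity statement \Cref{conservativityconj} together with smooth base change (or directly because $\DM^{\sm1}$ pulls back compatibly). So it suffices to control the composite $\rmm_1^\D(S,\Lambda)\to \DM^{\sm1}_{\et}(S,\Lambda)^\heart \to \DM^{\sm1}_{\et}(\eta,\Lambda)^\heart$; combined with \Cref{bvkvsord} (which identifies the target over the field $\eta$ with $\rmm_1^\D(\eta,\Lambda)$ via $\Phi_\eta^\natural$, a t-equivalence), this reduces the claim to a computation of $\mathrm{Hom}$-groups in the heart over $S$. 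Concretely: given $M,N\in\rmm_1^\D(S,\Lambda)$, one must show $\mathrm{Hom}_{\rmm_1^\D(S,\Lambda)}(M,N)\xrightarrow{\sim}\mathrm{Hom}_{\DM^{\mathrm{ind}1}_{\et}(S,\Lambda)^\heart}(\Phi_S M,\Phi_S N)$. I would do this via a localization/gluing argument of the type in \cite[Theorem~4.2.7]{AM1}: write $j\colon U\hookrightarrow S$ for a dense open over which everything is understood (e.g. where one already knows the statement, starting from the generic point and \Cref{bvkvsord}), use the colocalization triangle $i_!i^!\to\mathrm{Id}\to j_*j^*$, and analyze the first few motivic cohomology sheaves of $\omega^1 j_*j^*(\Phi_S M)$. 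The point — and this is \emph{the main obstacle} — is precisely the content flagged in the introduction: for $M$ smooth and in the heart, the low-degree cohomology of $\omega^1 j_*j^*(M)$ is well-behaved and does not yet feel the pathological torsion phenomena (which appear only in higher degrees, in the spirit of \cite{AM2}), so the relevant $\mathrm{Ext}$-groups controlling the gluing vanish. Making this degree bookkeeping precise — identifying which cohomology sheaves of $\omega^1 j_*j^*(\Phi_S M)$ are pathological and checking that they sit above the range relevant to $\mathrm{Hom}(M,N[\,{\leqslant}0\,])$ — is where the real work lies. This is exactly the statement referred to in the introduction as \Cref{embeddingthm}, and I would isolate it as a separate lemma and then deduce full faithfulness from it together with \Cref{embeddinggeneric2}.

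For \textbf{essential surjectivity}, let $M\in\DM^{\sm1}_{\et}(S,\Lambda)\cap\DM^{\mathrm{ind}1}_{\et}(S,\Lambda)^\heart$. Restricting to the generic point $\eta$ and using \Cref{bvkvsord} over the field $\eta$, the object $M_\eta$ comes from a Deligne $1$-motive with torsion $N_\eta\in\rmm_1^\D(\eta,\Lambda)$, i.e. $M_\eta\simeq\Phi_\eta(N_\eta)$. One must show $N_\eta$ has good reduction on $S$, i.e. lies in the essential image of $\rmm_1^\D(S,\Lambda)\to\rmm_1^\D(\eta,\Lambda)$; then $\Phi_S$ of that extension will agree with $M$ after restricting to $\eta$, and by full faithfulness (already proved) and conservativity of $i_\eta^*$ on smooth $1$-motives it will agree with $M$ on all of $S$. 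To verify good reduction I would invoke \Cref{bonnereduction}: since \Cref{etendable} holds on each component of $S$ and $\Lambda$ is a localization of $\Z'$, it suffices to check that for every prime $\ell\neq p$ the $\ell$-adic Tate module $T_\ell(N_\eta)$ has good reduction on $S[1/\ell]$. But $T_\ell(N_\eta)\simeq\rho_\ell\Phi_\eta(N_\eta)\simeq\rho_\ell(M_\eta)=(\rho_\ell M)_\eta$ (using that the Tate module functor is computed by the $\ell$-adic realization, cf. the proof of \Cref{reducingtofields}), and $\rho_\ell M$ is a \emph{dualizable} — hence lisse — object of $\D_{\mathrm{cons}}(S[1/\ell],\Lambda\otimes_\Z\Z_\ell)$ because $M$ is dualizable and $\rho_\ell$ is monoidal. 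A lisse sheaf on the regular (hence normal) scheme $S[1/\ell]$ restricts to a lisse sheaf whose associated Galois representation is unramified, i.e. its generic fiber extends, which is exactly the good-reduction condition for the Tate module. Hence \Cref{bonnereduction} applies and $N_\eta$ extends. I expect this half to be comparatively routine once \Cref{bonnereduction} and \Cref{embeddinggeneric2} are in hand; the only care needed is to package the identification $T_\ell\circ\Phi_\eta\simeq\rho_\ell\circ\Phi_\eta$ and the dualizability $\Rightarrow$ lisse step cleanly.
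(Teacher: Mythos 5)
Your overall architecture is the same as the paper's: reduce to $S$ connected with generic point $\eta$, use \Cref{embeddinggeneric2} and \Cref{bvkvsord} to compare with the situation over $\eta$, obtain essential surjectivity from \Cref{bonnereduction} by showing $\overline{\rho}_\ell(M)$ is a local system extending the Tate module, and get the restriction of the t-structure from abelianness of $\rmm_1^\D(S,\Lambda)$. The genuine gap is that the core technical input is only gestured at: both the full faithfulness of $\Phi_S$ (\Cref{embeddingthm}) and, crucially, the full faithfulness of restriction to $\eta$ on $\DM^{\sm1}_{\et}(S,\Lambda)\cap\DM^{\mathrm{ind}1}_{\et}(S,\Lambda)^\heart$ (\Cref{embeddingDM1} plus continuity) are deferred to "degree bookkeeping" on $\omega^1 j_*j^*$ that you do not carry out; in the paper this is exactly where the work lies (showing $i_*\omega^1 i^!(M(-1))$ sits in degrees $\geqslant 2$, via a stratification of the closed complement, absolute purity, and the compatibility of $\omega^1$ with $-\otimes_\Z\Q$ and $-\otimes_\Z\Z/n\Z$). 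Moreover, your proposed shortcut for faithfulness of $\eta^*$ on the motivic side — \Cref{conservativityconj} "together with smooth base change" — does not work as stated: conservativity of the $\ell$-adic realizations over $S$ says nothing about injectivity of $\Hom_S\to\Hom_\eta$ (a nonzero map between objects of the heart could a priori vanish at the generic point, e.g.\ be "supported" on the complement), and this is precisely why the paper proves \Cref{embeddingDM1} instead.

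Two smaller points. First, in the essential surjectivity step, "dualizable hence lisse" is too quick: a dualizable object of $\D_{\mathrm{cons}}(S[1/\ell],\Z_\ell\otimes_\Z\Lambda)$ is a perfect complex of lisse sheaves, and to conclude that $\overline{\rho}_\ell(M)$ lies in $\Loc_{S[1/\ell]}(\Z_\ell\otimes_\Z\Lambda)$ one also needs it to lie in the heart of the ordinary t-structure; this follows from $M$ being in the motivic heart together with the t-exactness of the realization on $1$-motives (\Cref{reducingtofields}, via \Cref{f^* ordinary}), which you should invoke explicitly. Second, the final sentence is not purely formal: besides \Cref{abeliancat} one needs boundedness of smooth $1$-motives for the motivic t-structure (\Cref{bounded}) in order to apply \cite[Lemma~1.2.3]{aps}, and — as the introduction itself warns — the exactness of the functor induced by $\Phi_S$, so that kernels computed in $\rmm_1^\D(S,\Lambda)$ agree with kernels computed in $\DM^{\mathrm{ind}1}_{\et}(S,\Lambda)^\heart$. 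Your claim that the intersection is "closed under kernels and cokernels" presupposes exactly this exactness; the paper verifies it by reducing to points using \Cref{conservativepts}, \Cref{f^* ordinary} and \Cref{Phiknatural}, and this verification is missing from your proposal.
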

\begin{proof} We adapt the arguments of \cite[Lemma~6.12]{haas}, as we work with integral coefficients, we need to be more careful: we will use \Cref{embeddingDM1} below. We can assume $S$ to be connected. Let $\eta$ be its generic point. We have a commutative square 
\[\begin{tikzcd}
    \rmm_1^\D(S,\Lambda) \ar[hook,d,"\eta^*"]\ar[hook, "\Phi_S", r]
    & \DM^{\sm1}_{\et}(S,\Lambda)\cap \DM^{\mathrm{ind}1}_{\et}(S,\Lambda)^\heart \ar[d,"\eta^*"]\\
    \rmm_1^\D(\eta,\Lambda) \ar["\Phi_\eta", r]
    & \DM^{\sm1}_{\et}(\eta,\Lambda)\cap \DM^{\mathrm{ind}1}_{\et}(\eta,\Lambda)^\heart 
\end{tikzcd}\]
where $\Phi_S$ is fully faithful by \Cref{embeddingthm}, the left vertical arrow is fully faithful by \Cref{embeddinggeneric2} and $\Phi_\eta$ is an equivalence because in that case $\DM^{\sm1}_{\et}(\eta,\Lambda)=\DM^1_{\et}(\eta,\Lambda)$ and the motivic t-structure restricts to $1$-motives by \Cref{bvkvsord}. Now we claim that the functor 
\[\eta^*\colon\DM^{\sm1}_{\et}(S,\Lambda)\cap \DM^{\mathrm{ind}1}_{\et}(S,\Lambda)^\heart \to \DM^{\sm1}_{\et}(\eta,\Lambda)\cap \DM^{\mathrm{ind}1}_{\et}(\eta,\Lambda)^\heart \] is fully faithful. This follows from \Cref{embeddingDM1} using continuity \cite[Proposition~1.23]{plh} (note that it still works with integral coefficients using the same proof as \cite[Theorem~6.3.9]{em} replacing \cite[Théorème~8.10.5 \& Proposition~17.7.8]{ega4} with \cite[Lemma~1.24]{plh}).
Hence to show that the functor induced by $\Phi_S$ as above is an equivalence, we have to show that if $M$ belongs to $\DM^{\sm1}_{\et}(S,\Lambda)\cap \DM^{\mathrm{ind}1}_{\et}(S,\Lambda)^\heart$ then $N_\eta=\Phi_\eta^{-1}(\eta^*(M))$ has good reduction. By \Cref{bonnereduction}, this amounts to showing that if $\ell$ is a prime number distinct from the characteristic of $\eta$, $T_\ell(N_\eta)$ extends to a local system over $S[1/\ell]$. Now, $\overline{\rho}_\ell(M)$ is a dualizable object in $\D_\mathrm{cons}(S[1/\ell],\Z_\ell\otimes_\Z\Lambda)$ so it has perfect fibers by \cite[Theorem~4.13]{hrs} and is in the heart of the t-structure so it belongs to $\Loc_{S[1/\ell]}(\Z_\ell\otimes_\Z \Lambda)$ by \cite[Theorem~6.2]{hrs} and by definition, its fiber to $\eta$ is $T_\ell(N_\eta)=\overline{\rho}_\ell(\eta^*(M))$.

The fact that the motivic t-structure induces a t-structure on $\DM^{\sm1}_\et(S,\Lambda)$ follows from \cite[Lemma~1.2.3]{aps}: indeed knowing that the objects of $\DM^{\sm1}_\et(S,\Lambda)$ are bounded by \Cref{bounded}, we only have to show that if $f\colon M\to N$ is a map of Deligne $1$-motives, then $\ker(f)$ is still a Deligne $1$-motive. This is true because $\rmm_1^\D(S,\Lambda)$ is an abelian category by \Cref{abeliancat} and the kernel computed there is the same as the kernel computed in $\DM^{\mathrm{ind}1}_{\et}(S,\Lambda)^\heart$ because the functor between those abelian categories is exact and fully faithful; its exactness follows from the fact that the functor $\Phi_S$ sends short exact sequences to exact triangles which are therefore again short exact sequences. This is true because for any $x\in S$, the functor $\Phi_x$ sends exact sequences to exact sequences (see \Cref{Phiknatural}) and the family of the \[i_x^*\colon \DM^{\Q-1}_{\et}(S,\Lambda)^\heart\to \DM^{\Q-1}_{\et}(x,\Lambda)^\heart\]
is exact and conservative by \Cref{conservativepts} and \Cref{f^* ordinary} (note that being bounded for the ordinary t-structure implies having bounded below torsion because of \Cref{torsioninduced}).
\end{proof}

\begin{lemma}\label{embeddingthm}
Let $S$ be a regular scheme let $\Lambda$ be a flat $\Z'$-algebra. Then the functor \[\Phi_S\colon \rmm_1^\D(S,\Lambda)\to\DM^{\sm1}_{\et}(S,\Lambda)\] is fully faithful.
\end{lemma}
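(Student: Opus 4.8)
The plan is to prove full faithfulness of $\Phi_S$ in two stages: reduce to the generic point, where we already know the answer, and then match $\Hom$-groups via a continuity/specialization argument. Concretely, write $M = [\rho_\sharp(L)\to G]\otimes_\Z\Lambda$ and $N = [\rho_\sharp(L')\to G']\otimes_\Z\Lambda$ for objects of $\rmm_1^\D(S,\Lambda)$. We want to show
\[
\Hom_{\rmm_1^\D(S,\Lambda)}(M,N)\;\xrightarrow{\ \sim\ }\;\Hom_{\DM_{\et}(S,\Lambda)}(\Phi_S M,\Phi_S N),
\]
both sides being concentrated in a single degree (the right side because $\Phi_S M,\Phi_S N$ lie in the heart by \Cref{Deligne1motareinheart}). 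Since $S$ is regular, we may assume it connected with generic point $\eta$; by \Cref{embeddinggeneric2} the left-hand $\Hom$ injects into $\Hom_{\rmm_1^\D(\eta,\Lambda)}(M_\eta,N_\eta)$, and over the field $\eta$ the functor $\Phi_\eta$ is fully faithful (indeed an equivalence onto $\DM^1_\et(\eta,\Lambda)^\heart$) by \Cref{bvkvsord} and \Cref{Phiknatural}. So it suffices to show that a morphism $\Phi_S M\to\Phi_S N$ which restricts to $\Phi_\eta(\psi)$ for some $\psi\colon M_\eta\to N_\eta$ already comes from a morphism $M\to N$ in $\rmm_1^\D(S,\Lambda)$ — equivalently, using \Cref{embeddinggeneric2} once more, that the image of $\Hom(M,N)$ in $\Hom_{\DM}(\Phi_S M,\Phi_S N)$ is all of the preimage of the essential image of $\rmm_1^\D(\eta,\Lambda)\to\rmm_1^\D(S,\Lambda)$-restrictions. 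This is exactly the content of the forthcoming \Cref{embeddingDM1}, so the real work is to set up the comparison compatibly.

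The key intermediate step is to understand $\Hom_{\DM_\et(S,\Lambda)}(\Phi_S M, \Phi_S N)$ well enough to see it is controlled by its restriction to $\eta$. Here I would use the motivic Picard functor $\omega^1$ (as in \cite{plh}): decompose $N$ via the triangle $G'[-1]\to L_\D(N)\to\rho_\sharp(L')$ so that one reduces to the cases $N=\rho_\sharp(L')[0]$ and $N=G'[-1]$, and similarly peel off $M$. For the lattice/torsion pieces one invokes \Cref{torsioninduced} and the rigidity theorem to identify the relevant $\Hom$-groups with étale-cohomological ones, which over a regular base are detected generically (purity). For the semi-abelian pieces one uses that $\Phi_S$ lands in smooth $1$-motives by \Cref{Delignemotare1mot} together with the computation of $\underline{\Hom}$ between smooth $1$-motives through $\omega^1$; the point, as emphasized in the introduction, is that only the first few cohomology sheaves of $\omega^1 j_*j^* M$ matter here and these behave well — the pathological torsion phenomena of \cite{AM2} live in higher degrees and do not interfere. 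Combining these with continuity \cite[Proposition~1.23]{plh} (valid integrally, cf. the argument in the proof of \Cref{DMsm1t}) lets one pass from $S$ to $\eta$.

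Assembling: faithfulness of $\Phi_S$ follows from faithfulness of $\eta^*$ on both sides plus faithfulness of $\Phi_\eta$; fullness follows from the two-step argument above, extending a generic morphism first to a morphism of étale sheaves / semi-abelian schemes by the normality results (\cite[Exposé~V, Proposition~8.2]{sga1} and \cite[Proposition~A.11]{plh}, as already used in the proof of \Cref{embeddinggeneric2}) and then checking the diagram commutes using that the relevant $\Hom$-groups embed into their generic fibers. The main obstacle I anticipate is precisely the bookkeeping in \Cref{embeddingDM1}: one must show that the $\Hom$-group $\Hom_{\DM_\et(S,\Lambda)}(\Phi_S M,\Phi_S N)$, computed through $\omega^1$, does not acquire extra classes beyond those visible generically — i.e. that the low-degree truncation of $\omega^1 j_* j^* \Phi_S N$ is insensitive to the bad torsion behavior — and this requires a careful degree count rather than a formal argument. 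Everything else is a reduction to the field case, which is \Cref{bvkvsord}.
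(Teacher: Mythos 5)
Your proposal is correct and follows essentially the same route as the paper: the same commutative square reducing everything to the generic point via \Cref{embeddinggeneric2} and the field case (\Cref{bvkvsord}, i.e.\ \cite{bvk} plus cancellation), with the remaining input being injectivity of restriction on $\Hom_{\DM_\et}$-groups, which the paper likewise obtains from \Cref{embeddingDM1} together with continuity. Your sketch of how \Cref{embeddingDM1} itself is proved (via $\omega^1$, with the torsion contributions only appearing in higher degrees) also matches the paper's treatment of that separate lemma.
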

\begin{proof}
    We can assume that $S$ is connected with generic point $\eta$ and that $\Lambda=\Z'$. In that case, the functor $\Phi_\eta$ is fully faithful by \cite[Theorem~2.1.2]{bvk} combined with Cisinski-Déglise's version of Voevodsky's cancellation theorem for étale motives (see the proof of \Cref{curvedecomposition}). Let $M$ and $N$ be in $\rmm_1^\D(S,\Z')$. Then, as by \Cref{f^*}, the functor $\Phi_?$ is compatible with pullbacks, we have a commutative diagram
    \[\begin{tikzcd}
        \Hom_{\rmm_1^\D(S,\Z')}(M,N) \ar[d]\ar[r]& \Hom_{\DM_{\et}(S,\Z')}(\Phi_S(M),\Phi_S(N)) \ar[d]\\
        \Hom_{\rmm_1^\D(\eta,\Z')}(M_\eta,N_\eta) \ar[r]& \Hom_{\DM_{\et}(\eta,\Z')}(\Phi_\eta(M_\eta),\Phi_\eta(N_\eta)).
    \end{tikzcd}\]
    The left vertical arrow is an equivalence by \Cref{embeddinggeneric2} while we just proved that the bottom horizontal one is an equivalence. Hence, it suffices to show that the map \[\Hom_{\DM_{\et}(S,\Z')}(\Phi_S(M),\Phi_S(N)) \to\Hom_{\DM_{\et}(\eta,\Z')}(\Phi_\eta(M_\eta),\Phi_\eta(N_\eta))\]
    is injective. This follows from \Cref{embeddingDM1} below and continuity. 
\end{proof}

\begin{lemma}\label{embeddingDM1}
    Let $S$ be a regular scheme and let $j\colon U\to S$ be a dense open immersion. Then, the functor 
    \[j^*\colon \DM^{\sm1}_{\et}(S,\Lambda)\cap \DM^{\mathrm{ind}1}_{\et}(S,\Lambda)^\heart\to \DM^{\sm1}_{\et}(U,\Lambda)\cap \DM^{\mathrm{ind}1}_{\et}(U,\Lambda)^\heart\]
    is fully faithful.
\end{lemma}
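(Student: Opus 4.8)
The plan is to reduce the full faithfulness of $j^*$ to a statement about the motivic Picard functor $\omega^1$ from \cite{plh}, following the spirit of \cite[Theorem~4.2.7]{AM1}. First I would note that since the category $\DM^{\sm1}_{\et}(-,\Lambda)\cap \DM^{\mathrm{ind}1}_{\et}(-,\Lambda)^\heart$ is an additive full subcategory (and, by \Cref{DMsm1t} — once proved — equivalent to $\rmm_1^\D(-,\Lambda)$, but we cannot yet invoke that here without circularity, so we argue directly), it suffices to show that for $M,N$ smooth $1$-motives in the heart over $S$, the natural map $\Hom(M,N)\to\Hom(j^*M,j^*N)$ is a bijection. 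By the adjunction $(j^*,j_*)$, this is the same as saying that the unit $N\to j_*j^*N$ induces an isomorphism on $\Hom(M,-)$, i.e. that $\Hom(M,\mathrm{fib}(N\to j_*j^*N))$ vanishes in degrees $0$ and $-1$ (the two degrees controlling full faithfulness). Writing $i\colon Z\to S$ for the closed complement, the colocalization triangle \eqref{colocalization} identifies this fiber with $i_*i^!N[-1]$ shifted appropriately, so the task becomes a vanishing statement for $\Hom_{\DM^{\mathrm{ind}1}_{\et}(S,\Lambda)}(M, i_*i^!N)$ in a suitable range of degrees.

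The next step is to transport this to a computation on $\omega^1 j_* j^* M$ (or on $\omega^1$ of the relevant fiber), using that $\omega^1$ is the right adjoint to the inclusion of $1$-motives and is well-behaved in low degrees. The key heuristic, as the introduction emphasizes, is that the pathological torsion behavior of the Artin-type truncation appears only in cohomological degrees $\geq 2$; the cohomology sheaves $\mathrm{H}^0$ and $\mathrm{H}^1$ of $\omega^1 j_*j^* M$ for $M$ smooth and in the heart are controlled. Concretely, I would compute $\mathrm{H}^i(\omega^1 j_*j^* M)$ for $i\in\{0,1\}$ by reducing, via the description of smooth $1$-motives in the heart, to the cases $M = \rho_!(L)$ for $L$ an étale local system and $M = \Sigma^\infty L_{\A^1}G[-1]$ for $G$ semi-abelian, and then to $G$ a torus or an abelian scheme separately (as in the proof of \Cref{Delignemotare1mot}). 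For each of these, $j_* j^*$ and the motivic truncation can be computed using the $\ell$-adic realization (which is t-exact on the relevant pieces by \Cref{f^* ordinary} and the comparison over fields) together with $\Q$-coefficient computations from \cite{plh}; the low-degree cohomology of $j_* j^*$ of a local system or of a (semi-)abelian scheme over a regular base is again of the same type because of purity / the fact that over a regular scheme, étale local systems and Néron models extend across codimension-$\geq 1$ phenomena in a controlled way — this is exactly the circle of ideas behind \Cref{embeddinggeneric2}.

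Finally, I would assemble these pieces: the vanishing of $\Hom(M, i_*i^!N)$ in degrees $0,-1$ follows once we know that the first two cohomology sheaves of $j_*j^*N$ agree with those of $N$ on the relevant étale-local pieces, which is precisely what the $\omega^1$-computation delivers. The main obstacle I expect is the careful bookkeeping of degrees: one must check that the genuinely badly-behaved contributions (the higher derived pushforwards, the non-constructibility of Artin truncation with integral torsion coefficients, cf. \cite[Proposition~2.2.1]{AM2}) really do live in degree $\geq 2$ and hence do not interfere with the $\Hom$ in degrees $0$ and $-1$ — in other words, proving the precise low-degree regularity of $\omega^1 j_* j^* M$ rather than merely the boundedness of $M$. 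A secondary technical point is handling the torsion part of the local system $L$ and the finite group schemes appearing in isogenies, which is why the hypothesis that $\Lambda$ is a flat $\Z'$-algebra (so that residue characteristics are invertible) is essential; this lets one invoke rigidity \cite[Corollary~3.2]{bachmannrigidity} and \Cref{torsioninduced} to control the torsion contributions degree by degree.
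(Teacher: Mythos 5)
Your overall architecture (reduce to a vanishing statement via the colocalization triangle, pass through $\omega^1$, and split rational from torsion contributions, with the pathology confined to degrees $\geqslant 2$) is the right one, and it matches the spirit of the paper's argument. But there is a genuine gap at the heart of your plan: you propose to compute $\mathrm{H}^0$ and $\mathrm{H}^1$ of $\omega^1 j_*j^*M$ ``by reducing, via the description of smooth $1$-motives in the heart, to the cases $M=\rho_!(L)$ and $M=\Sigma^\infty L_{\AAA}G[-1]$''. No such description is available at this stage: the statement that every object of $\DM^{\sm1}_{\et}(S,\Lambda)\cap \DM^{\mathrm{ind}1}_{\et}(S,\Lambda)^\heart$ is of Deligne type is exactly the essential surjectivity part of \Cref{DMsm1t}, whose proof uses the present lemma, so this reduction is circular. (Objects of the heart of a generated t-structure are not built from the generators in a way that lets you check such a computation generator by generator, so you cannot sidestep this by working with $\mathcal{JG}_S$ either.) A second problem is your suggestion to control $j_*j^*$ and $\omega^1 j_*j^*$ via the $\ell$-adic realization: conservativity and t-exactness of $\overline{\rho}_\ell$ are only available on geometric $1$-motives, whereas $\omega^1 j_*j^*M$ is precisely the kind of object that fails to be geometric with integral coefficients (cf. \cite[Proposition~2.2.1]{AM2}), so these tools do not apply to it.

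The paper's proof avoids any structure theory for $M$. After twisting by $(-1)$ to land in Ind-cohomological $1$-motives (so that the $\omega^1$-adjunction applies), one shows directly that $i_*\omega^1 i^!(M(-1))$ sits in degrees $\geqslant 2$, using only two facts about $M$: it is dualizable, and it lies in the heart. The mechanism is a stratification of the closed complement $Z$ by regular strata of codimension $c_s>0$ together with absolute purity, which converts $i^!$ into $\theta_s^*(-)(-c_s)[-2c_s]$; then the rational part is in degrees $\geqslant 2$ by \cite[Lemma~4.7]{plh}, while for the torsion part one only needs that $N\otimes_\Z\Z/n\Z$ lies in degrees $\geqslant -1$ (because $N$ is in the heart) so that the shift $[-2c_s]$ pushes it to degrees $\geqslant 1$, with $\omega^1$ acting as the identity on torsion motives and the filtered-colimit argument of \Cref{torsioninduced} handling $\Q/\Z$-coefficients. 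Your plan is missing this concrete degree-shifting input (purity plus the stratification), which is what actually produces the bound you correctly identify as the goal; without it, and without the circular structural reduction, the low-degree regularity of $\omega^1 j_*j^*M$ is not established.
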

\begin{proof}
We must first recall some notions from \cite{plh}, for $T$ a scheme, we have
\begin{enumerate}
\item the category $\DM_{\et}^{\mathrm{ind}\mathrm{coh}}(T,\Lambda)$ of Ind-cohomological motives to be the localizing subcategory of $\DM_{\et}(T,\Lambda)$ generated by the $f_*\Lambda_X$ for $f\colon X \to T$ proper.
\item the category $\DM_{\et}^{\mathrm{ind}\mathrm{coh}-1}(T,\Lambda)$ of Ind-cohomological $1$-motives to be the localizing subcategory of $\DM_{\et}(T,\Lambda)$ generated by the $f_*\Lambda_X$ for $f\colon X \to T$ proper of relative dimension at most $1$.
\item The inclusion $\DM_{\et}^{\mathrm{ind}\mathrm{coh}-1}(T,\Lambda)\to \DM_{\et}^{\mathrm{ind}\mathrm{coh}}(T,\Lambda)$ has a right adjoint $\omega^1$.
\end{enumerate}
Note that by \cite[Proposition~1.28]{plh}, we have a t-equivalence of categories
\[(-1)\colon \DM^{\mathrm{ind}1}_{\et}(T,\Lambda)\to \DM_{\et}^{\mathrm{ind}\mathrm{coh}-1}(T,\Lambda)\]
with the t-structure on the right hand side generated by $\mc{JG}_T(-1)$.

We now claim that if $M$ belongs to $\DM^{\sm1}_{\et}(S,\Lambda)\cap \DM^{\mathrm{ind}1}_{\et}(S,\Lambda)^\heart$, then the natural map
\[M(-1)\to \tau^{\leq 0}(\omega^1(j_*j^*M(-1)))\] is an equivalence which would prove the Lemma. We let $N=M(-1)$ and we let $i\colon Z\to S$ be the reduced closed complement of $S$. As $\omega^1$ commutes with $i_*$, the localization triangle \eqref{colocalization} yields an exact triangle 
\[i_*\omega^1i^!(N)\to N\to \omega^1j_*j^*N.\]
hence it suffices to prove that $i_*\omega^1i^!(N)$ lies in degree at least $2$ with respect to the motivic t-structure. For this we adapt \cite[Lemma~4.7]{plh2} to the setting of integral coefficients: chose a stratification $\emptyset =\overline{Z}_{m+1} \subseteq \overline{Z}_m\subseteq \cdots \subseteq \overline{Z}_0=Z$  by closed subsets with
$Z_s := \overline{Z}_s \setminus \overline{Z}_{s+1}$ regular and equidimensional. Write $c_s$ for the codimension of $Z_s$ in $S$, since $U$ is dense, we have $c_s>0$. Write $Z_i\xrightarrow{j_s} \overline{Z}_i \xleftarrow{k_s}\overline{Z}_{s+1}$ and let $\theta_s\colon \overline{Z}_{s+1}\to S$ be the closed immesion with $\theta_{-1}=i$. By localisation, we have distinguished
triangles
\[(k_s)_*\omega^1\theta_s^!(N)\to \omega^1\theta_{s-1}^!(N)\to \omega^1 (j_s)_*j_s^*\omega^1\theta_{s-1}^!(N)\]
using the compatibilities of $\omega^1$ with the six functors described in \cite[Proposition~3.3]{plh}.
By absolute purity \cite[Theorem~4.6.1]{em}, using the fact that $N$ is dualizable, we have $\theta_{s-1}^!(N)=\theta_s^*(N)(-c_s)[-2c_s]$. As $j_s^*$ is right t-exact by \Cref{t-adj}, the functor $(j_s)_*$ is left t-exact \textit{i.e.} preserves non-positive objects. Hence, combining the exactness properties of \Cref{t-adj2} and \Cref{f^* ordinary}, it suffices to show that $\omega^1(\theta_{s-1}^*(N)(-c_s)[-2c_s])$ lies in degree at least $2$. 

Now write the exact triangle 
\[\omega^1(\theta_{s-1}^*(N)(-c_s)[-2c_s])\to\omega^1(\theta_{s-1}^*(N)(-c_s)[-2c_s])\otimes_\Z \Q \to \omega^1(\theta_{s-1}^*(N)(-c_s)[-2c_s])\otimes_\Z \Q/\Z. \]
The same proof as \cite[Proposition~2.1.5]{AM2} yields that $\omega^1$ is compatible with tensoring with $\Q$ and $\Z/n\Z$ for any $n$. Furthermore, the functor $\omega^1$ induces the identity on torsion motives as they are $0$-motives (see \cite[Proposition~2.1.6]{AM2}). Hence, we get an exact triangle:
\[\omega^1(\theta_{s-1}^*(N)(-c_s)[-2c_s])\to\omega^1(\theta_{s-1}^*(N\otimes_\Z \Q)(-c_s))[-2c_s] \to \colim_n\ \theta_{s-1}^*(N\otimes_\Z \Z/n\Z)(-c_s)[-2c_s]). \]
Now as $U$ is dense, the term in the middle lies in degree at least $2$ by \cite[Lemma~4.7]{plh}, while each $\theta_{s-1}^*(N\otimes_\Z \Z/n\Z)(-c_s)[-2c_s]$ lies in degree at least $1$ because $N\otimes_\Z \Z/n\Z$ is in degree at least $-1$ and $c_s>0$. 
Hence, the same argument as at the end of the proof of \Cref{torsioninduced} yields that their colimit is also in degree at least $1$. This shows that $\omega^1(\theta_{s-1}^*(N)(-c_s)[-2c_s])$ is in degree at least $2$ finishing the proof.
\end{proof}

\begin{corollary}
\label{ordinary is trop bien}  Let $S$ be a $\Q$-scheme or a Dedekind scheme and
let $\Lambda$ localization of $\Z$. Then the motivic t-structure induces a t-structure on $\DM^1(S,\Lambda)$. This t-structure is furthermore compatible with pullback and with the $\ell$-adic realization functors.
\end{corollary}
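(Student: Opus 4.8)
The plan is to bootstrap from the smooth case \Cref{DMsm1t} by a localisation and noetherian induction, in the spirit of \cite[Theorem~4.2.7]{AM1}. Since $\DM^1_{\et}(S,\Lambda)$ is a stable subcategory of $\DM^{\mathrm{ind}1}_{\et}(S,\Lambda)$, the t-structure part amounts to showing that the motivic truncation functors $\tau^{\leqslant 0}$, $\tau^{\geqslant 0}$ preserve $\DM^1_{\et}(S,\Lambda)$; by \cite[Lemma~1.2.3]{aps} and \Cref{bounded} one could equally reduce to checking that $\DM^1_{\et}(S,\Lambda)\cap\DM^{\mathrm{ind}1}_{\et}(S,\Lambda)^{\heart}$ is stable under kernels and cokernels, but the argument below deals with the truncations directly. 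First I would reduce to $\Lambda$ a localisation of $\Z'$: over a $\Q$-scheme one has $\Z'=\Z$, and in general, testing along the conservative family $(-\otimes_\Z\Q,\,-\otimes_\Z\Z/\ell\Z)_\ell$, the rational part has $\Q$-coefficients (a localisation of $\Z'$, handled by \cite{plh2}) while each $\ell$-torsion part is supported on $S[1/\ell]$ by \cite[Proposition~A.3.4]{em} and, through the rigidity theorem \cite[Corollary~3.2]{bachmannrigidity} and \Cref{torsioninduced}, is governed by the theory of $0$-motives of \cite{AM1}; this is the fiddly but routine bookkeeping of the integral-to-rational descent.

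I would then argue by noetherian induction on $\dim S$ over the class of $\Q$-schemes and Dedekind schemes. The case $\dim S=0$ is a finite disjoint union of spectra of fields and is \Cref{bvkvsord} (having reduced to $\Lambda$ a localisation of $\Z'$, the coefficient ring is automatically a localisation of $\Z[1/p]$ for $p$ the characteristic exponent of each residue field). For the inductive step, fix $M\in\DM^1_{\et}(S,\Lambda)$; we may assume $S$ reduced, since $\DM_{\et}$ is insensitive to nilpotents. The point is that there is a dense open immersion $j\colon U\hookrightarrow S$ with $U$ regular and $j^*M\in\DM^{\sm1}_{\et}(U,\Lambda)$: $M$ is built from finitely many generators $f_\sharp(\Lambda_X)$ with $f\colon X\to S$ smooth of relative dimension $\leqslant 1$, and over a dense open of $S$ each such generator becomes, up to twist and shift, a finite extension of the motive of a smooth projective curve with a section — a smooth $1$-motive by \Cref{curvedecomposition}, \Cref{toruscase} and \Cref{Delignemotare1mot} — by a dualizable $0$-motive coming from the complementary divisor, which is finite étale over the base after \cite[Lemma~4.7]{plh}; since $\DM^{\sm1}_{\et}(U,\Lambda)$ is thick, $j^*M$ lands in it over a suitable common dense open $U$. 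Let $i\colon Z\hookrightarrow S$ be the closed complement, so $\dim Z<\dim S$ and $U$, $Z$ are again $\Q$-schemes, resp. Dedekind schemes, whose connected components satisfy \Cref{etendable} by \Cref{whendoesetendablehold}.

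Now \Cref{DMsm1t} says the motivic t-structure restricts to $\DM^{\sm1}_{\et}(U,\Lambda)$, so $\tau^{\leqslant 0}(j^*M)\in\DM^{\sm1}_{\et}(U,\Lambda)\subseteq\DM^1_{\et}(U,\Lambda)$, while $\tau^{\leqslant 0}(i^*M)\in\DM^1_{\et}(Z,\Lambda)$ by the induction hypothesis. Since $\DM^1_{\et}\subseteq\DM^{\Q-1}_{\et}$ and the motivic truncations preserve $\DM^{\Q-1}_{\et}$ by \Cref{Q-cons}, \Cref{f^* ordinary} gives $j^*\tau^{\leqslant 0}M=\tau^{\leqslant 0}j^*M$ and $i^*\tau^{\leqslant 0}M=\tau^{\leqslant 0}i^*M$; as $j_!$ and $i_*=i_!$ preserve $\DM^1_{\et}$ by \Cref{f_!}, the localisation triangle \eqref{localization} exhibits $\tau^{\leqslant 0}M$ as the cofibre of a map between objects of $\DM^1_{\et}(S,\Lambda)$, hence $\tau^{\leqslant 0}M\in\DM^1_{\et}(S,\Lambda)$. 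The same reasoning applies to $\tau^{\geqslant 0}$, so the motivic t-structure restricts to $\DM^1_{\et}(S,\Lambda)$.

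For the stated compatibilities: any $f^*$ preserves $\DM^1_{\et}$ and is t-exact on $\DM^{\Q-1}_{\et}$ by \Cref{f^* ordinary}, hence t-exact on $\DM^1_{\et}$; and for each prime $\ell$ the reduced realisation $\overline{\rho}_\ell$ sends t-non-negative, resp. t-non-positive, $1$-motives to t-non-negative, resp. t-non-positive, constructible $\ell$-adic complexes by the implication $(1)\Rightarrow(3)$ of \Cref{reducingtofields}, so it is t-exact for the ordinary t-structure on $\D_{\mathrm{cons}}(S[1/\ell],\Lambda\otimes_\Z\Z_\ell)$. I expect the main obstacle to be the key step above, i.e. producing a dense open over which an arbitrary $1$-motive becomes dualizable, since this is precisely what brings \Cref{DMsm1t} — and through it the good-reduction criterion \Cref{bonnereduction} and \Cref{etendable} — into play; everything downstream is then the formal localisation argument, the only other point requiring care being that the closed complement $Z$ arising in the induction, although possibly singular, stays within the class of schemes to which the statement applies.
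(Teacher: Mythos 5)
The skeleton of your argument (noetherian induction, spreading out over a dense regular open $U$ where the finitely many generators $f_\sharp(\Lambda_X)$ become dualizable via a smooth compactification with finite étale boundary, the localization triangle together with the t-exactness statements \Cref{f^* ordinary} and \Cref{f_!}, and the compatibilities via \Cref{reducingtofields}) is sound and is close in spirit to the reduction that the paper imports from \cite[Theorem~4.2.7]{AM1}, which likewise reduces to the case of a smooth $1$-motive over a base to which \Cref{DMsm1t} applies. The genuine gap is your very first step, the claimed reduction to $\Lambda$ a localization of $\Z'$ ``by testing along the conservative family $(-\otimes_\Z\Q,\,-\otimes_\Z\Z/\ell\Z)_\ell$''. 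That family detects vanishing and equivalences, but it does not detect geometricity: knowing that $\tau^{\leqslant 0}(M)\otimes_\Z\Q$ and each $\tau^{\leqslant 0}(M)\otimes_\Z\Z/\ell\Z$ are geometric $1$-motives (resp. $0$-motives) does not imply that $\tau^{\leqslant 0}(M)$ is geometric, since constructibility is a finiteness condition requiring uniformity in $\ell$. And without that reduction your appeal to \Cref{DMsm1t} over $U$ is illegitimate in the mixed-characteristic Dedekind case, which is the whole point of the integral statement: for $S=\Spec(\Z)$ and $\Lambda=\Z$, every dense open $U\subseteq S$ still has all but finitely many primes as residue characteristics, so $\Lambda$ is never a localization of the relevant $\Z'$, over $S$ or over any $U$.

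The paper's proof repairs exactly this point by working $p$-locally rather than rationally-and-mod-$\ell$: by \cite[Proposition~1.1.10]{AM1}, $\tau^{\leqslant 0}(M)$ is geometric if and only if it is $\pp$-geometric for every maximal ideal $\pp=(p)$, i.e. geometric after $-\otimes_{\Perf_\Z}\Perf_{\Z_\pp}$, and for a fixed $p$ one reduces (via the open-closed decomposition into $S[1/p]$ and the characteristic-$p$ part, plus the reduction to smooth $1$-motives) to a situation where either $p$ is invertible on the base or the base has characteristic $p$. In both cases the coefficient ring one actually works with ($\Lambda\otimes_\Z\Z_\pp$, resp. $\Lambda$ with $p$ inverted using \cite[Proposition~A.3.4]{em}) \emph{is} a localization of $\Z'$, so \Cref{DMsm1t} becomes applicable, with \Cref{Deligne1motareinheart} guaranteeing that the comparison functor into the $\pp$-localized category is t-exact. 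Your argument is complete as written for $\Q$-schemes (where $\Z'=\Z$), but for mixed-characteristic Dedekind bases you must replace your rational/mod-$\ell$ bookkeeping by this $\pp$-geometric mechanism; as it stands, that step is not ``routine'' but is precisely where the integral difficulty lives.
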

\begin{proof} Once we have proved that the t-structure restricts to $1$-motives, the rest will follow from \Cref{reducingtofields}.
We adapt the proof of \cite[Theorem~4.2.7]{AM1}.
If $\pp$ is a prime ideal of $\Z$ and if $X$ is a scheme, we say that an Ind-$1$-motive is \emph{$\pp$-geometric} when its image in the category $\DM^{\mathrm{ind}1}_{\et}(X,\Lambda)\otimes_{\Perf_{\Z}} \Perf_{\Z_\pp}$ belongs to $\DM^{1}_{\et}(X,\Lambda)\otimes_{\Perf_{\Z}} \Perf_{\Z_\pp}$. 
Let $M$ be a $1$-motive over $S$. We want to show that the Ind-$1$-motive $\tau^{\leqslant 0}(M)$ is a $1$-motive. By \cite[Proposition~1.1.10]{AM1}, it suffices to show that it is $\pp$-geometric for any maximal ideal $\pp$ of $\Z$. 

Let $p$ be a generator of $\pp$, exactly as in the proof of \cite[Theorem~4.2.7]{AM1}, we can reduce to the case when 
 \begin{enumerate}
     \item $p$ is invertible on $S$ or $S$ is of characteristic $p$.
     \item $M$ belongs to $\DM^{\sm1}(S,\Lambda)$.
 \end{enumerate}

Assume that $S$ is of characteristic $p$. It suffices to show that the Ind-$1$-motive $\tau^{\leqslant 0}(M)$ is a $1$-motive. Using \cite[Proposition~A.3.4]{em}, we can assume that $p$ is invertible in $\Lambda$. The result then follows from \Cref{DMsm1t} noting that \Cref{etendable} holds because of \Cref{whendoesetendablehold}.

If $p$ is invertible on $S$, note that the motivic t-structure induces a t-structure on \[\DM^{\mathrm{ind}1}_{\et}(X,\Lambda)\otimes_{\Perf_{\Z}} \Perf_{\Z_\pp}\] using \cite[Proposition~1.1.8]{AM1}. It suffices to show that it induces a t-structure on 
\[\DM^{\sm1}_{\et}(X,\Lambda)\otimes_{\Perf_{\Z}} \Perf_{\Z_\pp}\xrightarrow{\sim}\DM^{\sm1}_{\et}(X,\Lambda\otimes_\Z\Z_\pp).\] The latter has a t-structure induced by the motivic t-structure by \Cref{DMsm1t} noting that \Cref{etendable} holds because of \Cref{whendoesetendablehold}. Hence it suffices to see that the functor 
\[\DM^{\sm1}_{\et}(X,\Lambda\otimes_\Z\Z_\pp)\to \DM^{\mathrm{ind}1}_{\et}(X,\Lambda)\otimes_{\Perf_{\Z}} \Perf_{\Z_\pp}\] is t-exact. To that end it suffices to show that it sends $\rmm_1^\D(S,\Lambda\otimes_\Z\Z_\pp)$ to the heart which is true because the functor $\Phi_S$ sends Deligne $1$-motives to the heart of the motivic t-structure by \Cref{Deligne1motareinheart}.
\end{proof}

\section*{Acknowledgments}
I am very grateful to Frédéric Déglise and Fabrizio Andreatta for their constant support. I would like to thank Swann Tubach warmly for many comments on earlier versions of this paper. I also thank Luca Barbieri-Viale, Federico Binda, Mattia Cavicchi, Daniel Kriez and Alberto Vezzani with whom I had helpful discussions.

I was funded by the Prin 2022: The arithmetic of
motives and L-functions and by the ANR HQDIAG.

	\bibliographystyle{alpha}
	\bibliography{biblio.bib}

\end{document}